\titleformat*{\section}{\large\bfseries}
\titleformat*{\subsection}{\normalsize\bfseries}
\newtheorem{theorem}{Theorem}[section]
\newtheorem{assumption}[theorem]{Assumption}
\newtheorem{lemma}[theorem]{Lemma}
\newtheorem{corollary}[theorem]{Corollary}
\newtheorem{proposition}[theorem]{Proposition}
\newtheorem{definition}[theorem]{Definition}
\newtheorem{example}[theorem]{Example}
\newtheorem{remark}[theorem]{Remark}
\newtheorem{notation}[theorem]{Notation}
\numberwithin{equation}{section}
\newcommand{\cA}{\mathcal{A}}
\newcommand{\cB}{\mathcal{B}}
\newcommand{\cC}{\mathcal{C}}
\newcommand{\cE}{\mathcal{E}}
\newcommand{\cF}{\mathcal{F}}
\newcommand{\cG}{\mathcal{G}}
\newcommand{\cH}{\mathcal{H}}
\newcommand{\cK}{\mathcal{K}}
\newcommand{\cL}{\mathcal{L}}
\newcommand{\cM}{\mathcal{M}}
\newcommand{\cO}{\mathcal{O}}
\newcommand{\cV}{\mathcal{V}}
\newcommand{\cX}{\mathcal{X}}
\def \E{\mathbb{E}}
\def \F{\mathbb{F}}
\def \N{\mathbb{N}}
\def \P{\mathbb{P}}
\def \Q{\mathbb{Q}}
\def \R{\mathbb{R}}
\def \G{\mathbb{G}}
\def \W{\mathbb{W}}
\def \fm{\mathfrak{m}}
\def \fA{\mathfrak{A}}
\def \fM{\mathfrak{M}}
\DeclareMathOperator{\sign}{sign}
\newcommand{\prob}{\mathcal{M}_{1}} 						%space of probabilities.
\newcommand{\subp}{\mathcal{M}_{\le 1}} 					%space of subprobabilities.
\newcommand{\law}{\cL} 										%law of a random variable.
\newcommand{\smL}{\mathfrak{L}} 							%smoothing of runs.
\newcommand{\tauex}{\tau_{\mathrm{x}}} 						%absorbing time.
\newcommand{\tauey}{\tau_{\mathrm{y}}} 						%absorbing time 2.
\newcommand{\Xtauex}{X_{\tauex}} 							%X at absorbing time.
\newcommand{\env}{\mathfrak{P}}								%lifted environment
\newcommand{\trust}{Y}										%notation for trust process in the bank run model
\newcommand{\fxspace}{\cE}									%notation for compact space for fixed point
\newcommand{\sexp}{\mathscr{E}}								%notation for stochastic exponential
\renewcommand*{\@fnsymbol}[1]{\ensuremath{\ifcase#1\or *\or **\or 
\mathsection\or \mathparagraph\or \|\or **\or \dagger\dagger
\or \ddagger\ddagger \else\@ctrerr\fi}}
\begin{document}

\title{Mean field games with absorption and common noise \\ with a model of bank run}
\author{Matteo Burzoni and Luciano Campi}
%\author[2]{}
\affil{Universit\`a degli Studi di Milano}
%\affil[2]{}
\date{\today}
\maketitle
\abstract{We consider a mean field game describing the limit of a stochastic differential game of $N$-players whose state dynamics are subject to idiosyncratic and common noise and that can be absorbed when they hit a prescribed region of the state space. We provide a general result for the existence of weak mean field equilibria which, due to the absorption and the common noise, are given by random flow of sub-probabilities. We first use a fixed point argument to find solutions to the mean field problem in a reduced setting resulting from a discretization procedure and then we prove convergence of such equilibria to the desired solution. We exploit these ideas also to construct $\varepsilon$-Nash equilibria for the $N$-player game. Since the approximation is two-fold, one given by the mean field limit and one given by the discretization, some suitable convergence results are needed. We also introduce and discuss a novel model of bank run that can be studied within this framework.}

%\medskip
%\emph{Keywords and phrases:} mean field games, absorption, common noise, Nash equilibrium, bank run. 

\section{Introduction}
Mean field games are natural limits of symmetric stochastic differential $N$-player games when the number of players goes to infinity. The interaction between the players is in a weak sense, i.e. each player interacts with the rest of the population through its empirical measure. Mean field games have been introduced simultaneously by Lasry and Lions \cite{lasry-lions2006a,lasry-lions2006b,lasry-lions2007} and Huang et al. \cite{huang2006} in order to overcome the curse of dimensionality. Indeed, solving such games when $N$ is fixed and very large can be a cumbersome task, while passing to the limit with the number of players simplifies the analysis leading to approximate Nash equilibria for the original $N$-player games with vanishing error for $N \to \infty$. We refer to the two-volume book by Carmona and Delarue \cite{bookMFG} for a thorough and detailed treatment of the probabilistic approach to mean field games, while Cardaliaguet \cite{cardaliaguet} provides a complete introduction of the PDE-based techniques. For a recent survey of the many applications of such a class of games, especially in economics and finance, we refer to \cite{carmona2020applications}.

In this paper we consider a mean field game with \emph{absorption} and \emph{common noise}, arising as the limit of a stochastic differential game with weakly interacting players who can be absorbed, hence disappearing from the game, when their private states hit the boundary of a given domain. In the case without common noise, those games have been introduced in \cite{CF18} and further extended and analysed in \cite{CGL19}. In the $N$-player game, the private states of each player interact through the empirical distribution of the remaining players as well as through the number of absorbed players in a smooth fashion as in \cite{HS19}. Players' goal is maximising some objective functional up to the absorption time or a given maturity, whichever comes first. We are interested in showing existence of a solution for the limit mean field game and to study how such a solution can be used to provide approximate Nash equilibria for the $N$-player game for $N$ sufficiently large. Related to this problem is the study of the absorption phenomenon for McKean-Vlasov dynamics as in \cite{giesecke2013default,giesecke2015large,hambly2017stochastic,hambly2019mckean,HS19,ledger2020uniqueness,nadtochiy2019particle,nadtochiy2020mean}. A detailed description of the way these papers relate to mean field games with absorption can be found in \cite{CF18,CGL19}.

The structure of the mean field game we are interested in here is motivated by a novel bank-run model that we introduce in this paper. Other applications are also possible, e.g. to system of interacting defaultable firms or banks maximizing their profits or minimizing their reserves, respectively. Different mean field games approaches to the problem of bank run can be found in \cite{Carmona_bank,Nutz_stopping}. In the first paper the reserves of the bank are not affected directly, and in a dynamic way, by the run of investors. The key factor is whether the bank, at a certain given time, would be able to repay the investors who ran (see also \cite[Sect. 1.2.2, Vol. I]{bookMFG}. The second paper uses a totally different approach and the bank run problem is phrased as an optimal stopping problem. In some cases, an explicit solution is provided. In this work we propose to study a continuous-time model of bank run using a mean field games approach with absorption in the spirit of \cite{CF18,CGL19}. In a nutshell, the model consists in finitely many depositors of a given bank, whose assets' value is modelled as non-homogeneous diffusion driven by a Brownian motion and affected by the average number of leavers. Each agent chooses her own level of trust in the bank, that is also affected by an idiosyncratic noise, the bank assets' value and the average number of leavers. The goal of every agent is to maximise the largest portion of her deposit (including interests) that can be paid by the bank upon run. We will see later that such a game can be naturally framed as a $N$-player stochastic differential game with weak interaction, absorption and common noise, so that our theoretical results will apply to this model too.\\

\noindent The main contributions of this paper can be shortly summarized as follows:\begin{itemize}
\item \emph{Existence of a mean field game solution}: under some boundedness and regularity assumptions of the coefficients and the set of non-absorbing states, we prove existence of a \emph{weak} mean field solution, that is a tuple $(\xi,W,W^0,\alpha, \mu)$, where $\xi$ is the initial state, $W,W^0$ are the idiosyncratic and the common noise, respectively, $\alpha$ an admissible control and $\mu$ is an equilibrium (random) flow of sub-probability measures: given $\mu$ as input, $\alpha$ is an optimal control and each $\mu_t $ coincides a.s. with the marginal of the optimally controlled state $X^\alpha$, restricted on not being absorbed up to time $t$. The existence is proved adopting the approach of \cite{CDL16} which is based on a two-steps procedure: first, an equilibrium is derived in a reduced setting where the input $\mu$ can only take finitely many values depending on a discretized version of the common noise; second, by a suitable limiting argument, it is proved that the sequence of reduced equilibria converges to the desired solution.
\item \emph{Approximate Nash equilibria in the $N$-player game}: after establishing existence of a solution to the mean field game, we show how such a solution can be implemented in the $N$-player game, hence providing an approximate Nash equilibrium. This is done using a different approach than what it is usually done in the literature (see, e.g., \cite[Sect. 6.1.2, Vol. II]{bookMFG}). It consists in the following two steps: first, by relying on the discretization procedure used in the existence part, we show the approximation result for an $N$-player game where the interaction with the empirical distribution is reduced to a random sub-probability depending on a discretized version of the common noise; second, we pass to the limit in the discretization step and obtain the result for the original $N$-player game. The second step is pretty delicate as it requires some suitable uniform convergence results to deal with the mean field game approximation and the discretization simultaneously.
\end{itemize}

We can therefore summarize the main novelties of the paper as: the introduction of common noise in the framework of mean field games with absorption; the use of a new approach to derive approximate Nash equilibria by means of an auxiliary $N$-player game; the introduction of a novel model of bank run for which our results are applicable.
The paper is organized as follows: in Section \ref{sec:motivation} we describe the bank-run model motivating our study, in Section \ref{sec:MFG} we set the general framework together with the relevant assumptions, in Section \ref{sec:weakeq} we prove the existence result for mean field game solutions and, finally, in Section \ref{sec:approx-nash} we show how such solutions can be used to construct approximate Nash equilibria in the $N$-player game for $N$ large enough. The Appendix provides, for reader's convenience, some well-known technical results on BSDEs with (bounded) random horizon.

\begin{notation} 
	We denote by $\subp(\cX)$ the space of sub-probabilities on a metric space $(\cX,d)$, i.e., non-negative measures $\mu$ with $\mu(\cX)\le 1$. Unless otherwise specified, the space $\subp(\cX)$ will be equipped with the topology induced by the weak convergence of measures. For $\mu\in\subp(\cX)$ and $f\in L^1(\mu)$, we use the notation $\langle f,\mu\rangle:=\int f d\mu$ for denoting the integral of $f$ with respect to the measure $\mu$. The class of finite measures, resp. probability measures, on  $(\cX,d)$ is denoted by $\cM(\cX)$, resp. $\prob(\cX)$.
\end{notation}
\section{A motivating example: a model of bank run}\label{sec:motivation}
Consider a group of $N\in\N$ agents, each of which has deposited an initial capital $D^i_0$ in a given bank. The bank offers an interest rate on the deposits equal to $r$ and the amount $D^i_T:=e^{rT}D^i_0$ is promised to be paid back at time $T>0$. 
Each depositor has the right to withdraw early her capital (\emph{run}) and to collect the cumulative interests. There is a risk that the bank is not able to fulfill its obligations and, thus, not all the depositors can be paid back. 
We interpret runs as the loss of confidence for the depositor that the bank will be able to pay back the capital at time $T$. To this aim we model the level of trust of agent $i$ as a stochastic process $\trust^i=(\trust^i_t)_{t\in[0,T]}$ with initial value $Y^i_0>0$. If $\trust^i$ hits $0$, agent $i$ does not trust the bank anymore, she withdraws the deposit and the cumulative interests (if the bank has enough capital) and runs.
We allow the process $\trust^i$ to depend on the fractions of agents who already left the game; this allows us to capture the self-exciting aspect of runs (see the discussion in Section \ref{sec:discussion} below).
The market value of the assets of the bank is represented by a stochastic process $S=(S_t)_{t\in[0,T]}$. For player $i$ the game continues until time $T$ or until either $Y^i$ or $S$ hit $0$, which means that the player does not trust the bank anymore or that the bank is bankrupt and no further payments can be disbursed.
We thus consider the following system of stochastic differential equations (SDEs):
\begin{eqnarray}\label{eq:model}
dS_t &=& b^0(t,S_t)dt + \sigma^0(t,S_t) dW^0_t+e^{rt}dL^N_t \nonumber \\
d\trust^i_t&=&b^i(t,\alpha_t,\trust^i_t, S_t,L^N_t)dt+\sigma^i dW^i_t
\end{eqnarray}
for $i=1,\ldots,N$, with
\[
L^N_t=\frac{1}{N}\sum_{i=1}^N 1_{[0,\tau^i)}(t),\qquad\tau^i:=\inf\{t\in[0,T]\ :\  \trust^i_t\notin [0,\infty)\text{ or } S_t\notin [0,\infty)\}
\]
and $(S_0, \trust^1_0,\ldots,\trust^N_0)\in\R^{N+1}_+$, where $\R_+ := (0,+\infty)$. The objective function that each agent aims to maximize is
\[
J^i(\alpha)=\E\big[(e^{r\tau^i}D^i_0)\wedge S_{\tau^i}\big],
\]
namely, the largest portion of her deposit (comprehensive of interests) that can be paid by the bank upon run. This amount may be zero if bankruptcy occurs before the run.

We will study models of mean field type which can be seen as the limiting problem of the one described above. Aiming at obtaining a tractable model we introduce two approximations. First, together with a terminal payoff $G$ we also consider a running cost $f$ which can be interpreted as the cost of the effort to change opinion on the bank (e.g.\ gathering information). Second, we take a smooth approximation of the term $dL^N$ as in \cite{HS19}.
\begin{definition}\label{def:kernel}
	A smoothing kernel is a $C^\infty$ function $k:\R_+\to\R_+$ with compact support on $[0,\varepsilon]$ for some $\varepsilon>0$ and with $\int_0^\varepsilon k(u)du=1$.
\end{definition} An example of such a kernel is given by the bump function 
\[
k(u)=\frac{1}{c}\begin{cases}e^{-\frac{1}{\varepsilon-u}}&0\le u<\varepsilon\\
	0& u\ge\varepsilon,
\end{cases} 
\]
where $c$ is a suitable normalization constant. We use such kernels to define:
\[
\smL^N_t:=\int_0^tk(t-s)L^N_s ds.
\] 
When the level of trust $\trust^i$ reaches zero, for some player $i$ at time $t$, the term $dL^N_t$ would cause an immediate jump of size $1/N$ in the dynamics of $S$. By using $d\smL^N_t$ instead, the decrease of value in $S$ is not instantaneous but rather takes place over a time interval of length $\varepsilon$. This is not only useful for the tractability of the model but it also represents a more realistic situation where the bank needs a certain amount of time (typically small) in order to repay a running depositor.
\subsection{Discussion of the model}\label{sec:discussion}
Since we want to study mean field models able to approximate the above system, we think of a bank with a very large number of similar clients. In particular, the coefficients are the same for every $i$, i.e.\ $b^i\equiv b$ and $\sigma^i\equiv \sigma$, as well as the initial deposit which is normalized to $D_0=1/N$. 
\begin{itemize} 
	\item \emph{The bank's value}. The dynamics of the asset $S$ reveals that the event of a running agent will cause a negative jump in its value, in the amount of the initial deposit plus the due cumulative interests. Note that if $S_t\ge L^N_te^{rt}$, every agent still in the game can be paid in full so that the bank is solvent. The bank is insolvent (and goes bankrupt) when $S$ hits $0$. If $S_t< L^N_te^{rt}$ the bank could not survive if each of the remained agents decided to leave. In this case we can speak of technical default but if the agents do not withdraw their deposits, the bank continues to operate. In the determination of the solvency situation of the bank we can also use a deterministic function $L$ which represents its liquidation value (see \cite{Carmona_bank}).
	\item \emph{The trust dynamics}. In order to present the main ideas it is sufficient to consider a very simple instance of the above model, such as,  $d\trust_t=-\alpha_t dt$ with initial value $\trust_0=1$. In this model, a choice of $\alpha\le 1/T$ leads to the deterministic exit time $T$. This means that the depositor has the possibility of remaining passive and doing nothing until the terminal time $T$, regardless of the performance of the bank or other agents' run. A more active agent may use the control $\alpha$ to anticipate her exit time depending on the values that she observes. In this way, $\alpha$ can be interpreted as the impatience of the agent, namely, the more negative the value the sooner the agent wants to leave the game. Other interpretations of the control variable $\alpha$ are also possible and depend on the model formulation. The classical socio-economics literature designed experiments to measure the level of trust among two parties. For example, in the famous \emph{trust game} (see \cite{Trust}), an individual is given a certain amount of initial capital and is asked to send part of it (or all of it) to a counterpart. The experimenter will double this amount and send it to the counterpart who needs to decide the portion of the received capital to send back to the first individual. If the individuals trust each others, both of them can obtain an amount equal to the initial capital. Through this experiment, the level of trust can be thus measured in a monetary amount.  In this example the bank may try to elicit the level of trust of its depositors which, also for strategic reasons, may not be directly observable. In a similar spirit of the above experiment, one could set up a questionnaire asking, for example, which percentage of their initial amount they would be willing to deposit in another bank if they had the possibility. 
	
In our model the trust level can also be influenced by different factors:
\begin{itemize}
	\item The performance of the bank, by means of the value of its assets $S_t$.
	\item The percentage of individuals who already left, by means of the value $1-L^N_t$. It is well documented that bank runs are often caused by panicking which are triggered by early withdrawals of other individuals (see among others \cite{BankRun1,BankRun2}). The idea is that an agent prefers to run before everyone else does.
	\item A private noise component $W^i_t$. The agent might want to withdraw early for private reasons such as unforeseen expenses or because of some non-rational components that are not modeled here. This noise component may also be used to describe the case where an agent cannot perfectly monitor the financial stability of the bank or the amount of depositors who already ran. One possibility is to let $\trust^i=(\hat{\trust}^i,Z^i)$ where $Z^i_t=L^N_t+\sigma^Z W^i_t$ and the trust process $\hat{\trust}^i$ depends on $Z^i$ rather than on $L^N$ directly.
\end{itemize}
\end{itemize}

\begin{example}\label{ex:bankrun}
 Let $b,\lambda,\mu,\sigma>0$ and $\gamma:\R\to\R$ be a bounded function which is positive on $\R_+$ and negative otherwise. 
 \begin{eqnarray*}
 	dS_t &=& \mu dt + \sigma^0 dW^0_t+e^{rt}dL^N_t\\
 	d\trust^i_t&=&(\alpha_t +\gamma (S_t-b) - \lambda L^N_t)dt+\sigma^\trust dW^i_t.
 \end{eqnarray*}
 In this example, the level of trust decreases if the value of the bank is below a certain threshold $b>0$ and increases otherwise. Trust diminishes also if some other depositor runs, the magnitude of such an effect can be modulated by the coefficient $\lambda>0$. 
\end{example}

The system \eqref{eq:model} can be written in a more compact form using a multidimensional process $X$ with exit time from a generic set $\cO$ as we will do in the rest of the paper. This allows to write Example \ref{ex:bankrun} with a single vector $X^i:=[S,\trust^i]$ with the complementary of $\cO=\R_+\times\R_+$ as absorbing set. 
We use modeling bank runs as a motivation to study the more general class of mean field games with absorption and common noise. Under some assumptions, we obtain existence of equilibria for the mean field model and approximate equilibria for the $N$-player game which apply, in particular, to the above model. A detailed study of the equilibria in models of bank run is an interesting question which is beyond the scope of this paper.

\section{MFG with common noise and absorption}\label{sec:MFG}
We start by introducing the pre-limit model.
Let $A\subset \R^k$ be a compact set representing the possible values of the admissible controls and $\cO\subset\R^d$ be an open set so that $\cO^C$ represents the absorbing region. The coefficients of private states' dynamics are given by the measurable functions $b:[0,T]\times \R^{d} \times \R\times A\to\R^{d}$, $h:\R^{d}\to\R$, $\eta:[0,T]\to\R^d$, the matrices $\sigma,\sigma^0\in \R^{d\times d}$ and a smoothing kernel $k:\R_+\to\R_+$ as in Definition \ref{def:kernel}. The objective functional to be maximized by each player is composed of measurable functions $f:[0,T]\times \R^{d} \times \R\times A\to\R$ 
and $G:[0,T]\times \R^{d}\to\R$, where $G$ is interpreted as a payoff which is received by the agent upon leave, whereas $f$ is interpreted as a running cost and therefore takes usually non-positive values.

\begin{assumption}\label{ass:alltogether}
	 The following are standing assumptions throughout the paper.
	\begin{enumerate}[label=\textnormal{\textbf{(H\arabic*)}},ref=(H\arabic*)]
		\item $b$, $f$, $h$ and $\eta$ are continuous and bounded on their domains. Moreover, $h$ is Lipschitz continuous. \label{hp:cont_bdd}
		\item $b$ is affine in the variable $a$, i.e., $b=b_1(t,x,m)+b_2(t)a$ for some bounded deterministic $b_2$. \label{hp:affine} 
		\item $\sigma,\sigma^0$ are constant matrices of full rank. \label{hp:sigmaconst}
		\item $f(t,x,m,a)$ is strictly concave in $a$ for every $(t,x,m)$ fixed. $G$ is bounded on $[0,T]\times\R^{d}$.\label{hp:terminal}
		\item The boundary of $\cO$ satisfies $\lambda_d (\partial \cO) =0$, where $\lambda_d $ denotes the Lebesgue measure on $\R^d$.\label{hp:boundary}
	\end{enumerate}
\end{assumption}

\begin{remark} Sufficient conditions granting the property (H5) above are, for instance, $C^2$ regularity of the boundary or the convexity of the set $\cO$.\end{remark}

The state dynamics for each player $i\in\{1,\ldots,N\}$ are constructed in a weak sense as follows. Let $(\Omega,\cF,\P)$ be a probability space, supporting a sequence of i.i.d. initial states $\{\xi^i\}_{i \geq 1}$ with common distribution with support in $\cO$, an infinite sequence of independent $\F$-Brownian motions $\{W^i\}_{i \geq 1}$ and the Brownian motion $W^0$. We assume that $\{\xi^i\}_{i \geq 1}$, $\{W^i\}_{i \geq 1}$ and $W^0$ are independent. The random variable $\xi^i$ represents the initial condition of player $i$, while the Brownian motions $W^i$ and $W^0$ represent, respectively, the idiosyncratic  and the common noise. We equip this probability space with the minimal filtration $\F^N :=(\cF^N _t)_{t\in[0,T]}$ generated by the first $N$ initial conditions $\{\xi^i\}_{i=1}^N$, the Brownian motions $\{W^i\}_{i=1}^N$ and the common noise $W^0$, and satisfying the usual conditions.
Let us start from the uncontrolled processes $X^i=(X^i_t)_{t\in[0,T]}$, $i=1,\ldots, N$, given by
	\begin{equation}\label{eq:Xi_weak_formulation}
	X^i _t=\xi^i +\sigma W^i _t+\sigma^0 W^0_t,\qquad t\in[0,T].
	\end{equation}	
Next, define 
\[\smL^N _t :=\int_0^tk(t-s) L^N _s ds = \int_0^tk(t-s) \langle 1, \mu^N _s \rangle ds,\] where $\mu^N_t$ is the empirical sub-distribution of players who have not been absorbed, i.e.,
\[
\mu^N_t=\frac{1}{N}\sum_{i=1}^N\delta_{X^i_t}(\cdot) 1_{[0,\tauex^i)}(t),\qquad\tauex^i:=\inf\{t\in[0,T]\ :\  X^i_t\notin \cO\},
\]
with the convention $\inf \emptyset = T$. Note that $\smL^N _t$ is $\P$-a.s.\ differentiable in $t$. Indeed, using the smoothness of $k$ and the a.e. continuity of $t\mapsto L_t ^N$ due to the function $t \mapsto 1_{\{\tauex^i >t\}}$ being continuous everywhere outside the event $\{\tauex^i =t\}$, which has zero measure under $\P$ (as in \cite[Lemma 6.2]{CF18}), we obtain
	\[{\smL^N_t}'=\frac{d}{dt}\int_0^tk(t-s)\langle 1,\mu^N _s\rangle ds=k(0)\langle 1,\mu_t\rangle+\int_0^tk'(t-s)\langle 1,\mu^N _s\rangle ds\quad \P\text{-a.s.}
	\]
	Since $d\smL^N_t = {\smL^N _t}' dt$ we define $\tilde{b}:[0,T]\times \R^{d} \times \cC([0,T];\subp(\R^{d}))\times A\to \R^{d}$ as 
	\begin{equation}\label{eq:btilde}
	\tilde{b}(t,x,m,a):=b(t,x,\langle h,m_t\rangle,a)+k(0)\langle 1,m_t\rangle+\int_0^tk'(t-s)\langle 1, m_s\rangle ds.
	\end{equation}
	Note that $\tilde{b}$ is uniformly bounded by \ref{hp:cont_bdd} and by the compactness of the support of $k$. The set of admissible controls $\cA_N$ for any player $i$ consists of all $A$-valued $\F^N$-progressively measurable processes. 
	For any $\alpha^i \in\cA_N$,  consider the stochastic exponential 
    \[
    \sexp(U^N)_t,\quad\text{with}\quad U^N_t:=\sum_{i=1}^N \int_0^t\sigma^{-1}\tilde{b}(s,X^i _s,\mu ^N ,\alpha^i _s)dW^i_s,
   \]
   which is a martingale since $\sigma^{-1}\tilde{b}$ is bounded. Define the measure $\P^{N,\alpha}\sim\P$ by 
   $d\P^{N,\alpha}/d\P:=\sexp(U^N)_T$. Girsanov's Theorem ensures that $(W^0,W^{1,\alpha^1}, \ldots, W^{N,\alpha^N})$ is a Brownian motion, where,
   \[ 
   W^{i,\alpha^i}_t:=W_t-\int_0^t\sigma^{-1}\tilde{b}(s,X^i_s,\mu^N,\alpha^i_s)ds,
   \]
leading to the following dynamics for the state variable under the new probability measure $\P^{N,\alpha}$

\begin{eqnarray}\label{eq:model2}
dX^i_t&=&b(t,X^i_t,\langle h,\mu^N_t\rangle,\alpha^i_t)dt+\sigma dW^{i,\alpha^i}_t+\sigma^0 dW^0_t +\eta(t)d\smL^N_t\\
X^i_0&=&\xi^i .\nonumber
\end{eqnarray}
The objective functional that each player aims to maximize is
\begin{equation}\label{eq:model2_cost}
J^i(\alpha)=\E^{N,\alpha}\bigg[\int_0^{\tauex^i} f(t,X^i_t,\langle h,\mu^N_t\rangle,\alpha^i_t)dt+G(\tauex^i,X^i_{\tauex^i})\bigg],
\end{equation}
where $\alpha = (\alpha^1, \ldots, \alpha^N)$ with $\alpha \in \cA^N$ for all $i=1,\ldots, N$. Notice that we chose the same function $h$ as in the drift of \eqref{eq:model2} only for the sake of simplicity. 

We aim at finding approximate Nash equilibria, which we define as follows. First, we denote by $[\alpha^{-i},\beta]$ the vector $\alpha$ where, in position $i$, the control $\alpha^i$ is replaced by the control $\beta$.

\begin{definition}\label{def:eps_Nash}Given $\varepsilon>0$, an $N$-dimensional vector of admissible controls $\alpha=(\alpha^1,\ldots,\alpha^N) \in \cA_N ^N$ is an $\varepsilon$-Nash equilibrium for the $N$-player game if for any admissible control $\beta$ and any $i\in\{1,\ldots,N\}$,
	\[ J^i(\alpha)\ge J^i([\alpha^{-i},\beta])-\varepsilon.
	\] 
\end{definition}
This notion of equilibrium is standard in the mean field games literature. The idea is that even if an agent would deviate from the equilibrium strategy $\alpha$, then the value function would increase of no more than $\varepsilon$, which we think small. Moreover, since the players are allowed to use any $\F^N$-progressively measurable process, we are looking at Nash equilibrium in open loop strategies. 

The above formulation is very convenient in order to derive the limiting mean field equations. Consider a filtered probability space $(\Omega,\cF,\F:=(\cF_t)_{t\in[0,T]},\P)$ satisfying the usual conditions and supporting independent $(\xi,W^0,W)$, where $\xi\in L^2(\Omega,\cF_0,\P)$ is the initial condition and  $W^0,W$ are $d$-dimensional Brownian motions. A random flow of sub-probabilities $\mu$ defined as
\[
(t,\omega)\mapsto \mu_t(\omega)\in \subp(\R^{d})
\]
represents the limit of $(\mu^N_t)_{t\in[0,T]}$ as $N\to\infty$. Differently from the case without common noise, this flow is not deterministic as the common source of noise does not disappear in the limit. Moreover, the fact that $\mu_t$ is a sub-probability represents the absorption effect. We recall that $\subp(\R^{d})$ is equipped by the Borel $\sigma$-field generated by the topology of weak convergence of measures. 
If, for a moment, we suppose that the flow $\mu$ is given and fixed we can compute the optimal response of an agent to the given flow. For a certain control process $\alpha$, the state dynamics reads as 
\begin{eqnarray}\label{eq:MFG_state}
dX_t&=&b(t,X_t,\langle h,\mu_t\rangle,\alpha_t)dt+\sigma dW_t+\sigma^0  dW^0_t +\eta(t)d\smL_t\nonumber\\
X_0&=&\xi
\end{eqnarray}
where  $\smL_t:=\int_0^tk(t-s)L_s ds$ and $L_t=\langle 1,\mu_t\rangle$.
The objective functional to maximize is 
\begin{equation}\label{eq:MFG_cost}
J(\alpha)=\E\bigg[\int_0^{\tauex} f(t,X_t,\langle h,\mu_t\rangle,\alpha_t)dt+G(\tauex,\Xtauex)\bigg],
\end{equation}
with $\tauex:=\inf\{t\in[0,T]\ :\  X_t\notin \cO\}$, over the set of $A$-valued progressively measurable processes called admissible controls and denoted by $\cA$. 
The solution to the optimization problem is understood in the weak sense, more details including its rigorous formulation will be given at the beginning of Section \ref{sec:FBSDEopt}. When a maximum exists, an optimal control $\hat{\alpha}$ generates a new flow of conditional sub-probabilities 
\begin{equation}\label{eq:subprob}
\hat \mu_t(B):=\P\big(\{X^{\hat{\alpha}}_t\in B\} \cap \{\tauex>t\}\mid W^0\big),\qquad B\in\cB(\R),
\end{equation}
where $X^{\hat{\alpha}}$ is the solution to \eqref{eq:MFG_state} with $\hat{\alpha}$ as control.
A strong mean field game equilibrium is obtained as a fixed point of the above procedure. 
\begin{definition}\label{def:MFGequilibrium}
A filtered probability space  $(\Omega, \cF, \F, \P)$ supporting independent $(\xi,W^0,W)$ together with a random flow of sub-probabilities $(t,\omega)\mapsto \mu_t(\omega)\in \subp(\R^{d})$ and a control policy $\alpha$ is called a \emph{strong MFG equilibrium} if
\begin{itemize}
\item $\alpha$ is an optimal control for the optimization problem \eqref{eq:MFG_state},\eqref{eq:MFG_cost};
\item $\mu_t(\cdot)=\P\big(\{X^\alpha_t\in \cdot\} \cap \{\tauex>t\}\mid \cF_t^{W^0}\big)$, where $X^\alpha$ is the solution to \eqref{eq:MFG_state} for the given control $\alpha$ and $(\cF_t^{W^0})_{t\in[0,T]}$ is the filtration generated by $W^0$.
\end{itemize}
\end{definition}
We stress that throughout the paper the probability space is part of the solution to the MFG problem so all the solutions are weak in the probabilistic sense. We use instead the adjectives strong and weak in relation to the properties of the MFG equilibria. Indeed, adopting the terminology of \cite{CDL16} we call \emph{strong} those equilibria for which the conditioning in the equilibrium distribution is with respect to the sole common noise $W^0$, enlarged with the initial condition $\xi$ if needed. Despite natural, strong solutions are difficult to obtain under general assumptions. We thus consider a \emph{weak} version of the above problem where another filtration $\F$, possibly larger than the one generated by $W^0$, dictates the second condition of Definition \ref{def:MFGequilibrium}. Since $\F$ is not necessarily the one generated by the common noise it is important to impose conditions under which we find sensible solutions, namely, we need to guarantee that the additional source of randomness contained in $\F$ does not provide future information on the noise of the system. We achieve this with the help of the immersion property: a filtration $\G\subset \F$ is \emph{immersed} in $\F$ if every square-integrable $\G$-martingale is a square-integrable $\F$-martingale. 
\begin{definition}\label{def:MFGequilibrium_weak}
	A filtered probability space  $(\Omega, \cF, \F, \P)$ supporting independent $(\xi,W^0,W)$ together with a random flow of sub-probabilities $(t,\omega)\mapsto \mu_t(\omega)\in \subp(\R^{d})$ and a control policy $\alpha$ is called a \emph{weak MFG equilibrium} if
	\begin{itemize}
		\item $\alpha$ is an optimal control for the optimization problem \eqref{eq:MFG_state},\eqref{eq:MFG_cost};
		\item $\mu_t(\cdot)=\P\big(\{X^\alpha_t\in \cdot\} \cap \{\tauex>t\}\mid \cF_t^{W^0,\mu}\big)$, where $X^\alpha$ is the solution to \eqref{eq:MFG_state} for the given control $\alpha$ and $(\cF_t^{W^0,\mu})_{t\in[0,T]}$ is the filtration generated by $(W^0,\mu)$;
		\item The filtration generated by $(\xi,W,W^0,\mu)$ is immersed in $\F$.
	\end{itemize}
\end{definition}
The first two conditions are analogous to those of Definition \ref{def:MFGequilibrium}, the third one guarantees that the enlargement of the filtration still produces sensible solutions. In the next section %\ref{sec:weakeq} 
we discuss more the notion of weak equilibrium and describe how to obtain such solutions. In particular, we prove the following main existence result for mean field games with absorption and common noise.
\begin{theorem}\label{thm:existence}
	Under Assumption \ref{ass:alltogether}, there exists a weak MFG equilibrium.
\end{theorem}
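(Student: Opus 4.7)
The plan follows the two-step strategy of \cite{CDL16} flagged by the authors. First I would build a sequence of reduced problems by discretizing the common noise $W^0$: partition $[0,T]$ into $n$ subintervals $[t_k^n, t_{k+1}^n]$ and approximate $W^0$ by a finitely-valued process obtained, for instance, by projecting each increment onto a finite mesh. In the reduced setting, admissible input flows $\mu$ take only finitely many values over each subinterval, so the candidate flows, viewed as random elements of $C([0,T];\subp(\R^d))$ measurable with respect to the discretized $\sigma$-algebra, are parametrized by a finite tree structure.

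Fix the discretization level $n$. On the canonical weak setup I would define the best-response multifunction $\Phi^n$ sending an input flow $\mu$ to the collection of $\hat\mu$ satisfying \eqref{eq:subprob} for some optimizer $\hat\alpha$ of \eqref{eq:MFG_cost} with state dynamics \eqref{eq:MFG_state}. Using the weak Girsanov construction already set up in Section \ref{sec:MFG}, together with \ref{hp:cont_bdd}--\ref{hp:affine} (bounded drift, affine in the control) and strict concavity of $f$ in $a$ from \ref{hp:terminal}, the Hamiltonian admits a unique maximizer; existence of $\hat\alpha$ and its characterization follow from the BSDE with bounded random horizon $\tauex$ recalled in the Appendix. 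Hypothesis \ref{hp:boundary} together with the non-degeneracy in \ref{hp:sigmaconst} ensures that $\tauex$ is a.s.\ continuous on the set of Brownian paths, hence $\alpha\mapsto\hat\mu$ is continuous. Equipping the space of reduced flows with the weak topology on $\subp(\R^d)$ uniformly in time and using tightness (inherited from the bounded drift and the Kolmogorov estimate for $X^\alpha$), I obtain a compact convex set stable under $\Phi^n$. Kakutani--Fan--Glicksberg then yields a fixed point $\mu^n$, giving a weak equilibrium for the reduced problem.

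Next I would pass to the limit $n\to\infty$. The equilibria $(\xi,W,W^{0,n},\alpha^n,\mu^n)$, interpreted on the canonical space, have tight laws in a Polish product of the form $\R^d\times C([0,T];\R^{2d})\times \cV \times C([0,T];\subp(\R^d))$, where $\cV$ is the space of relaxed controls. Extracting a weakly convergent subsequence via Prokhorov and realising the limit via Skorokhod's representation on a new space, I would check that the limit tuple $(\xi,W,W^0,\alpha,\mu)$ satisfies: (i) the state SDE \eqref{eq:MFG_state} with $\mu$ as input, by passing to the limit in the martingale problem associated to the controlled dynamics using continuity and boundedness of $b$ and $\tilde b$ from \eqref{eq:btilde}; (ii) the fixed-point identity $\mu_t(\cdot)=\P(\{X^\alpha_t\in\cdot\}\cap\{\tauex>t\}\mid\cF^{W^0,\mu}_t)$, here using \ref{hp:boundary} to ensure that the event $\{\tauex>t\}$ is a continuity set of the limit law; (iii) optimality of $\alpha$, via stability of the value function under convergence of $\mu^n$ together with lower semicontinuity of the BSDE representation; and (iv) immersion of the filtration generated by $(\xi,W,W^0,\mu)$ into $\F$, inherited from the canonical construction since $W$ remains a Brownian motion for the enlarged filtration at each $n$.

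The main obstacle is step (iii), the stability of optimality under passage to the limit, combined with (ii). The running cost $f$, the drift $b$, and the hitting time $\tauex$ interact through the common noise in a way that is not a priori continuous: weak convergence $\mu^n\to\mu$ does not automatically yield $\langle h,\mu^n_t\rangle\to\langle h,\mu_t\rangle$ at each $t$, nor convergence of optimal controls. To overcome this I would exploit (a) the Lipschitz continuity of $h$ in \ref{hp:cont_bdd}, which turns weak convergence of marginals into strong convergence of $\langle h,\mu_t\rangle$; (b) the negligibility of $\partial\cO$ from \ref{hp:boundary}, ensuring that the map sending a path to $1_{\{\tauex>t\}}$ is continuous outside a null set; and (c) the strict concavity in \ref{hp:terminal}, which makes the best-response map single-valued and closed-graph, so that any limit of optimizers is itself an optimizer. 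A secondary technical point is preserving the immersion property in the limit, which I would handle by verifying that $W$ is still an $\F$-Brownian motion along the subsequence, equivalent to immersion of $\F^{\xi,W,W^0,\mu}$ into $\F$ under the canonical construction.
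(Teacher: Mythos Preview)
Your two-step plan---fixed point on a discretized common-noise problem, then a weak limit---is the paper's strategy, and your checklist (i)--(iv) for the limit is the right one. The substantive differences are in the topology and in the object on which the fixed point is taken. The paper does \emph{not} run Schauder on flows $\mu$ in the weak topology; instead it parametrizes the discretized problem by the vector of full conditional laws $(\fm_1,\ldots,\fm_{|\cV|})$ of $(X,W)$ on path space, restricted to the set $\fxspace$ of measures absolutely continuous with respect to the uncontrolled law with uniformly $L^2$-bounded density (see \eqref{eq:compact_measures}), and equips $\fxspace$ with the $\tau$-topology (convergence of integrals of bounded \emph{measurable} functions). This buys two things you would otherwise have to earn: compactness of $\fxspace$ is automatic from the $L^2$ bound, and the discontinuous indicator $1_{\{\tau(x)>t\}}$ integrates continuously in $\tau$-topology, so continuity of $\Phi$ does not rely on \ref{hp:boundary} at the fixed-point stage. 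Continuity of $\Phi$ is proved via relative entropy and Pinsker, giving total-variation (hence $\tau$-) convergence of the outputs; Schauder suffices because strict concavity makes $\Phi$ single-valued (Kakutani is not needed, cf.\ Remark~\ref{rmk:set-valued}). For the limit $n\to\infty$ the paper carries along the \emph{lifted environment} $\env^n=(\fM^n,\fA^n)$---the conditional law of $(X,W)$ together with that of the relaxed control---rather than the control process itself; tightness is for $\P^n\circ(X,W,W^0,\env^n)^{-1}$, and Lemma~\ref{lem:weakconv_subp} obtains $\langle g,\mu^n_t\rangle\Rightarrow\langle g,\mu^\infty_t\rangle$ by first proving $\P^\infty\circ X^{-1}\ll\P^1$ so that $\{\tauex=t\}$ is null, which is where \ref{hp:boundary} actually enters. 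Your weak-topology route and closed-graph argument for (ii)--(iii) are plausible but would need to reproduce these absolute-continuity and $\tau$-topology facts to close; the paper's choices are designed precisely to sidestep the discontinuity of the absorption indicator at both stages.
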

Within this framework, we can construct a mean field model of bank run for $X:=[S,\trust]$ representing the bank value and the trust dynamics, as described in Section \ref{sec:motivation}. We assume $S_0,\trust_0>0$. Moreover, we implicitly assume that that all agents share the same $b^i=b$, $f^i=f$ and $G^i=G$, in order to make the mean field approximation meaningful. The terminal reward function is given $G(\tauex,X_{\tauex}):=(e^{r\tauex}D_0)\wedge S_{\tauex}$, where $D_0>0$ is the initial deposit, $r>0$ the interest rate and $\tauex$ is the minimum between $T>0$ and the exit time from the set $\cO=\R_+\times\R_+$, i.e., the first time either the agent runs or the bank defaults. Note that with this choice of $G$ and $\cO$, assumptions \ref{hp:terminal}-\ref{hp:boundary} are satisfied. For the other coefficients, we require them to satisfy  Assumption \ref{ass:alltogether}. Therefore, the following result is straightforward.
	\begin{corollary} Under  Assumption \ref{ass:alltogether}, the above mean field model of bank run admits a weak MFG equilibrium.
	\end{corollary}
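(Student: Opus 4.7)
The plan is to verify that the bank-run model is a particular instance of the general MFG framework set up in Section \ref{sec:MFG} and then invoke Theorem \ref{thm:existence}. Concretely, set the state dimension $d=2$ and identify $X := [S,\trust] \in \R^2$, with absorbing region $\cO^c$ where $\cO = \R_+\times\R_+$; the exit time $\tauex$ is then exactly the first time either the bank defaults or the depositor runs. The coefficients $b_1,b_2,\eta,h,f$ and the matrices $\sigma,\sigma^0$ are, by the hypothesis of the corollary, chosen to satisfy \ref{hp:cont_bdd}--\ref{hp:sigmaconst}, so these items need no further work.

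Next I would check \ref{hp:boundary}. The boundary $\partial\cO$ is the union of the two nonnegative coordinate half-axes in $\R^2$, i.e.\ $(\{0\}\times[0,\infty))\cup([0,\infty)\times\{0\})$, which is a closed set of two-dimensional Lebesgue measure zero; alternatively, one simply invokes the remark after Assumption \ref{ass:alltogether} since $\R_+\times\R_+$ is convex.

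The only slightly delicate point is \ref{hp:terminal}, namely the boundedness of $G$. Strict concavity of $f$ in $a$ is built into the hypotheses, but the terminal payoff
\[
G(t,x) = (e^{rt}D_0)\wedge s, \qquad x=(s,y)\in\R^2,
\]
is unbounded below as $s\to -\infty$ on all of $[0,T]\times\R^2$. However, $G$ is only ever evaluated at the exit state $X_{\tauex}$, which lies either on $\partial\cO$ (so $s=0$ or $y=0$ with $s\ge 0$) or at maturity in $\cO$ (so $s\ge 0$); in both cases $G(\tauex,X_{\tauex})\in[0,e^{rT}D_0]$. Thus, without altering the value \eqref{eq:MFG_cost} of the problem, we may replace $G$ outside $\overline{\cO}$ by any bounded measurable extension, for instance $\tilde G(t,(s,y)):=(e^{rt}D_0)\wedge (s\vee 0)$, which is continuous and bounded on $[0,T]\times\R^2$. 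With this harmless modification \ref{hp:terminal} holds.

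Once all items of Assumption \ref{ass:alltogether} are verified in the bank-run setting, Theorem \ref{thm:existence} applies verbatim and yields a weak MFG equilibrium in the sense of Definition \ref{def:MFGequilibrium_weak}. The main (and only) obstacle is the bookkeeping around the unboundedness of $G$ outside $\cO$, which is resolved by the extension argument above; everything else is a direct translation between the general model and the specific one.
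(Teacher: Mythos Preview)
Your proposal is correct and follows exactly the route the paper takes: the paper simply states that with this choice of $G$ and $\cO$ assumptions \ref{hp:terminal}--\ref{hp:boundary} hold, requires the remaining coefficients to satisfy Assumption \ref{ass:alltogether}, and declares the corollary ``straightforward'' as a direct application of Theorem \ref{thm:existence}. You are in fact more careful than the paper on one point: the paper asserts without comment that \ref{hp:terminal} is satisfied, whereas you correctly observe that $G(t,(s,y))=(e^{rt}D_0)\wedge s$ is unbounded on all of $[0,T]\times\R^2$ and fix this via a bounded extension that agrees with $G$ on $[0,T]\times\overline{\cO}$, which is the only region where $G$ is ever evaluated.
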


\section{Existence of weak mean field equilibria}\label{sec:weakeq}
One major difficulty in studying the MFG problem of Section \ref{sec:MFG} is that, due to the presence of a common noise, the flow of sub-probabilities $\mu$ is a stochastic process and some care is required for handling its measurability properties. In absence of common noise, the existence of an equilibrium is given by a fixed point argument yielding a \emph{deterministic} flow of sub-probability. To that aim one can use well known compactness criteria which are, in our framework, no longer available. To cope with this difficulties, we proceed in the spirit of \cite{CDL16} and solve first a discretized problem and then pass to the limit. The discretization consists in constraining the flow $\mu$ to be measurable with respect to a finite filtration induced by $W^0$. By construction, this approximated problem admits a strong solution in the sense of Definition \ref{def:MFGequilibrium}, however, the passage to the limit does not preserve this property and therefore only a weak equilibrium is obtained.  

A key ingredient of the analysis is represented by a lifted environment $\env=(\fM,\fA)$. The first component will represent the conditional law of the process $(X,W)$, with $X$ the equilibrium state process. The second component will represent the conditional law of the optimal control $\alpha$ seen as a relaxed control (see equations \eqref{eq:equilibrium} below). 
It is important to point out that $\env$ will not be only a technical tool for performing the limiting procedure but it will also  determine the filtration $\F$ for the existence of a weak MFG equilibrium and, in turn, the information available to the agents for their optimization task. Indeed, given $\env$, we construct the process $(\env_t)_{t\in [0,T]}$, where $\env_t$ is the restriction of the probability measures $\fM$ and $\fA$ to $\cC([0,t];\R^{2d})$ and $[0,t]\times A$, respectively.
\begin{definition}\label{def:natural}
We call the \emph{natural filtration} of $\env$, the sigma-algebra $\sigma(\env_s \mid 0\le s\le t)$. The same terminology will be used for its component $\fM$ and $\fA$ when needed.	
\end{definition}
We next define the technical setup in which we work to prove Theorem \ref{thm:existence} and to guarantee that we obtain sensible weak MFG equilibria as in Definition \ref{def:MFGequilibrium_weak}.

\begin{definition}\label{def:admissiblesetup} Let $(\Omega^i,\cF^i,\F^i,\P^i)$ for $i=0,1$ be filtered probability spaces satisfying the usual assumptions and supporting two (independent) Brownian motions $(W^0,W=W^1)$ and an independent initial condition $\xi$ on $\Omega^1$. Let $\env:\Omega^0\to\prob(\cC([0,T];\R^{2d}))\times\prob(\cM([0,T]\times A))$ be an $\cF^0_T$-measurable random variable. We construct the product space $\Omega:=\Omega^0\times\Omega^1$ endowed with $\cF:=\cF^0\otimes\cF^1$ and use the measure $\P:=\P^0\otimes \P^1$ to complete the right-continuous product filtration $\F$ with the $\P$-null sets. The couple $(\Omega,\cF,\F,\P)$ and $\env$ is called an \emph{admissible setup} if the filtration generated by $(\xi,W^0,W,\env)$ is immersed in $\F$.
\end{definition}

\begin{definition}\label{def:liftedMFGeq} Let $(\Omega,\cF,\F,\P)$ and $\env$ be an admissible setup.
	We say that $\env=(\fM,\fA)$ induces a \emph{weak MFG equilibrium }if by setting,   
	\begin{equation}\label{eq:induced_flow}
	\mu_t(\omega^0)(B):=\int_{\cC([0,T];\R^{d})} 1_{B}(x_t)1_{\{\tau(x)>t\}}\fM^X(\omega^0)(dx),\quad \omega^0\in\Omega^0,\ B\in \cB_{\R^{d}},
	\end{equation}
	with $\tau(x):=\inf\{t\in[0,T]\ :\ x_t\notin \cO\}$ and $\fM^X$ the first marginal of $\fM$, there exists an $\F$-progressively measurable $\alpha$ with values in $A$ solving \eqref{eq:MFG_state}-\eqref{eq:MFG_cost} such that: 
	\begin{equation}\label{eq:equilibrium} \fM(\omega^0)=\law(X^\alpha(\omega^0,\cdot),W(\cdot))\quad \text{and}\quad \fA(\omega^0)=\law\left(\delta_{\alpha_t (\omega^0,\cdot)}dt \right),
	\end{equation}
for every $\omega^0$ outside a $\P^0$-null set, where $\delta_{\alpha_t (\omega^0,\cdot)}dt$ denotes the measure on $[0,T] \times A$ induced by the control $\alpha$.
When $\alpha$ is a solution of \eqref{eq:MFG_state}-\eqref{eq:MFG_cost} in the relaxed sense, i.e.
the associated relaxed control $\delta_{\alpha_t (\omega^0,\cdot)}dt$ maximizes 
\begin{equation}\label{eq:MFG_cost_relax}
J(\gamma)=\E\bigg[\int_0^{\tauex} f(t,X_t,\langle h,\mu_t\rangle,a)\gamma(dt, da)+G(\tauex,\Xtauex)\bigg],
\end{equation}
with $\tauex:=\inf\{t\in[0,T]\ :\  X_t\notin \cO\}$, over the set of $\F$-progressively measurable processes $\gamma$ with values in\[
\Gamma:=\left\{q\in\cM([0,T]\times A)\ :\ q(dt,da)=dt\ q_t(da),\text{ for some Borel measurable kernel }q_t\right\},
\] subject to the following relaxed state dynamics
\begin{eqnarray}\label{eq:MFG_state_relax}
dX_t&=&\int_Ab(t,X_t,\langle h,\mu_t\rangle,a)\gamma(dt, da) +\sigma dW_t+\sigma^0  dW^0_t +\eta(t)d\smL_t\nonumber\\
X_0&=&\xi
\end{eqnarray}
where  $\smL_t:=\int_0^tk(t-s)L_s ds$ and $L_t=\langle 1,\mu_t\rangle$, we say that $\env=(\fM,\fA)$ induces a \emph{relaxed weak MFG equilibrium}.
\end{definition} 
The condition \eqref{eq:equilibrium} describes the fact that $\env$ encodes all the relevant aspects for the equilibrium: it defines $\alpha$ and $\mu$ such that $\alpha$ is optimal for \eqref{eq:MFG_state}-\eqref{eq:MFG_cost} and $\mu$ is a fixed point. On the other hand the admissibility of the setup automatically implies the immersion property of Definition \ref{def:MFGequilibrium_weak}. It follows that  $(\Omega, \cF, \F, \P, \mu,\alpha)$ is a weak MFG equilibrium as in Definition \ref{def:MFGequilibrium_weak}  (see also the discussion of \cite[(Vol II) Chapter 2]{bookMFG} in a similar setup, in particular, Remark 2.19).

We adopt a probabilistic approach and reduce the problem to solving a suitable class of McKean-Vlasov FBSDEs.
As discussed above, it is implicit that we need to guarantee that a solution exists on an admissible setup and since the filtration $\F$ is not assumed to be the one generated by the Brownian motions, an extra orthogonal martingale term will appear. The equilibrium equation is of the following form: 
\begin{eqnarray}
dX_t&=&b(t,X_t,\ell^X_t,\hat{\alpha}_t)dt+\sigma dW_t+\sigma^0  dW^0_t +\eta(t)d\smL_t\nonumber\\
dY_t&=& f(t,X_t,\ell^X_t,\hat{\alpha}_t)dt + Z_tdW_t+Z^0_t dW^0_t+dM_t\label{eq:mkvFBSDE}
\end{eqnarray}
with boundary conditions $X_0=\xi$ and $Y_{\tauex}=G(\tauex,\Xtauex)$, where  $\tauex:=\inf\{t\in[0,T]\ :\  X_t\notin \cO\}$, $\hat{\alpha}$ is a suitable optimal control to be determined and 
\[
\ell^X_t=\E[h(X_t)1_{\tauex>t}\mid \cF^0_t].
\]
\begin{definition}
	A process $(X,Y,Z,Z^0,M)$ on a filtered probability space $(\Omega,\cF,\F,\P)$ is a (weak) solution to the McKean-Vlasov FBSDE \eqref{eq:mkvFBSDE} if it is $\F$-progressively measurable, it satisfies
	\[\E\left[\sup_{0\le t\le T}\big(|X_t|^2+|Y_t|^2+|M_t|^2\big)+\int_0^T \big(|Z_t|^2+|Z^0_t|^2\big) dt \right]<\infty, 
	\]
	the process $M$ is orthogonal to $(W,W^0)$ and equation \eqref{eq:mkvFBSDE} is satisfied.
\end{definition}

The proof of Theorem \ref{thm:existence} consists essentially in finding solutions to \eqref{eq:mkvFBSDE} and it is divided in two parts. First we find an equilibrium in a reduced setting where the common noise is only allowed to take finitely many values and where the admissibility condition in Definition \ref{def:admissiblesetup}  is automatic; this is achieved via a fixed point procedure based on Schauder's Theorem. Second we show how to construct a sequence of reduced equilibria, represented by some lifted environments $\env^n$, which converge to a suitable $\env^\infty$. The admissibility of the limiting environment will induce the desired MFG equilibrium.

\subsection{The FBSDE in a random environment}\label{sec:FBSDEopt}
In this section we are given an input filtered probability space $(\Omega,\cF,\F,\P)$ supporting two independent Brownian motions $(W^0,W)$ and an independent initial condition $\xi\in L^2(\Omega,\cF,\P)$ with distribution $\law(\xi)$. We suppose that the filtration is right-continuous and completed with the $\P$-null sets in $\cF$.
We also suppose that we are given an $\F$-progressively measurable input process $\mu:[0,T]\times\Omega\to\subp(\R^{d})$ with continuous trajectories $t\mapsto \mu_t (\omega)$ with respect to weak convergence, for all $\omega \in \Omega$. Since the measure-valued input process is fixed, the FBSDEs in this section will not be of McKean-Vlasov type.
We also do not assume that the filtration $\F$ is the one generated by the Brownian motion so that, in the following, we work with FBSDEs in a general space (see for example \cite{bookElkaroui1997backward}). We only assume here that $L^2(\Omega,\F,\P)$ is separable, an hypothesis that is satisfied by all our cases of interest.

In order to describe the weak formulation of the control problem \eqref{eq:MFG_state}-\eqref{eq:MFG_cost}, let us proceed as in for $N$-player game. Despite the great similarity, we give all details for reader's convenience. We start from the uncontrolled process $X=(X_t)_{t\in[0,T]}$ given by
	\begin{equation}\label{eq:X_weak_formulation}
	X_t=\xi+\sigma W_t+\sigma^0 W^0_t,\qquad t\in[0,T].
	\end{equation}	
%	Next, we note that $\smL_t=\int_0^tk(t-s)\langle 1,\mu_s\rangle ds$ is $\P$-a.s.\ differentiable in $t$. Indeed, using the smoothness of $k$ and the continuity of $t\mapsto \langle 1,\mu_t\rangle$, we obtain
%	\[\smL'_t=\frac{d}{dt}\int_0^tk(t-s)\langle 1,\mu_s\rangle ds=k(0)\langle 1,\mu_t\rangle+\int_0^tk'(t-s)\langle 1,\mu_s\rangle ds\quad \P\text{-a.s.}
%	\]
%	Since $d\smL_t = \smL'_t dt$ we define $\tilde{b}:[0,T]\times \R^{d} \times \cC([0,T];\subp(\R^{d}))\times A\to \R^{d}$ as 
%	\begin{equation}\label{eq:btilde}
%	\tilde{b}(t,x,m,a):=b(t,x,\langle h,m_t\rangle,a)+k(0)\langle 1,m_t\rangle+\int_0^tk'(t-s)\langle 1, m_s\rangle ds.
%	\end{equation}
%	Note that $\tilde{b}$ is uniformly bounded by \ref{hp:cont_bdd} and by the compactness of the support of $k$. 
	The set of admissible controls $\cA$ consists of all $A$-valued $\F$-progressively measurable processes. For any $\alpha\in\cA$,  consider the stochastic exponential 
    \[
    \sexp(U)_t,\quad\text{with}\quad U_t:=\int_0^t\sigma^{-1}\tilde{b}(s,X_s,\mu,\alpha_s)dW_s,
   \]
   with $\tilde{b}$ as in \eqref{eq:btilde}, which is a martingale since $\sigma^{-1}\tilde{b}$ is bounded by \ref{hp:cont_bdd} and by the compactness of the support of $k$. Define the measure $\P^{\mu,\alpha}\sim\P$ by 
   $d\P^{\mu,\alpha}/d\P:=\sexp(U)_T$. Girsanov's Theorem ensures that $(W^0,W^{\alpha})$ is a Brownian motion, where,
   $ 
   W^{\alpha}_t:=W_t-\int_0^t\sigma^{-1}\tilde{b}(s,X_s,\mu,\alpha_s)ds,
   $
leading to the following dynamics for the state variable
\[X_t = \xi + \int_0 ^t b(s,X_s,\langle h,\mu_s\rangle,\alpha_s)ds+\sigma W^\alpha_t+\sigma^0 W^0_t +\int_0 ^t \eta(s)d\smL_s,\]
under the measure $\P^{\mu,\alpha}$. The weak formulation of \eqref{eq:MFG_state}-\eqref{eq:MFG_cost} consists in the following control problem
\begin{equation}\label{eq:MFG_weak}\sup_{\alpha \in \mathcal A} J^{weak} (\alpha) := \sup_{\alpha \in \mathcal A} \E^{\alpha}\left[\int_{0}^{\tauex} f(t,X_t,\langle h,\mu_t\rangle,\alpha_t)dt+G(\tauex,\Xtauex)\right],\end{equation}
where we denote $\E^\alpha$ the expectations under $\P^{\mu,\alpha}$. 

In order to find an optimizer of the above problem, we consider the Hamiltonian of the system $H:[0,T]\times \R^{d} \times \cC([0,T];\subp(\R^{d}))\times \R^{d}\times A\to\R^{d}$, defined as
	\[
	H(t,x,m,z,a):=f(t,x,\langle h,m_t\rangle,a)+z\cdot\sigma^{-1}\tilde{b}(t,x,m,a),
	\]
	where the inverse of $\sigma$ exists by \ref{hp:sigmaconst}.
	Continuity of $H$ follows by \ref{hp:cont_bdd} and the fact that $k(0)l_t+\int_0^tk'(t-s)l_s ds$ is a continuous function of an $\R$-valued process $l$, with respect to the supremum norm. From the compactness of $A$, $H$ admits a maximum on $A$ for every fixed $\theta:=(t,x,m,z)\in[0,T]\times \R^{d} \times \cC([0,T];\subp(\R^{d}))\times\R^{d}$.
	Moreover, using  Berge's maximum theorem (see e.g.\ \cite[Theorem 17.31]{AliprantisBorder06}), our assumption guarantees that the $\hat{H}(\theta):=\max_{a\in A}H(\theta,a)$ is a continuous function of $\theta$ and there exists a continuous function $\hat{a}: [0,T]\times \R^{d} \times \cC([0,T];\subp(\R^{d}))\times\R^{d}\to A$ such that
		\begin{equation}\label{eq:existence max}
		\hat{H}(\theta)=\max_{a\in A}H(\theta,a)=H(\theta,\hat{a}(\theta)),\quad \forall\theta\in [0,T]\times \R^{d} \times \cC([0,T];\subp(\R^{d}))\times\R^{d}.
		\end{equation}
  Such a maximizer is unique by the strict concavity of $f$ in \ref{hp:terminal}. 

\begin{proposition}\label{prop:exFBSDE}
	Let $\mu=(\mu_t)_{t\in[0,T]}$ be a given $\F$-progressively measurable process with values in $\subp(\R^{d})$, such that all paths are continuous. Let $X$ be given by \eqref{eq:X_weak_formulation} and let $(\hat Y, \hat Z, \hat Z^0, \hat M)$ be the unique solution of the BSDE
\begin{eqnarray}
dY_t&=& \hat H\big(t,X_t, \mu, \hat a(t, X_t, \mu, Z_t) \big)dt +Z_tdW_t+Z^0_t dW^0_t+dM_t\nonumber\\
Y_{\tauex}&=&G(\tauex,\Xtauex),\label{eq:randomFBSDE}
\end{eqnarray}
with $\tauex:=\inf\{t\in[0,T]\ :\  X_t\notin \cO\}$. Then, the admissible control $\hat{\alpha}=(\hat{\alpha}_t)_{t\in[0,T]}$ given by $\hat{\alpha}_t:=\hat{a}(t,X_t,\mu,\hat{Z}_t)$ is an optimizer of the control problem \eqref{eq:MFG_weak}.
\end{proposition}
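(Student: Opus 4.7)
The argument is a classical BSDE-based verification, adapted to three features specific to this setting: the random horizon $\tauex$, the filtration $\F$ which is possibly strictly larger than the one generated by $(\xi,W,W^0)$ (whence the orthogonal martingale $\hat M$), and the weak formulation via the Girsanov change of measure $\P\rightsquigarrow\P^{\mu,\alpha}$. The strategy is (i) to show that \eqref{eq:randomFBSDE} is well-posed and delivers admissible data, (ii) to expand $d\hat Y_t$ under $\P^{\mu,\alpha}$ for an arbitrary admissible $\alpha$, and (iii) to integrate up to $\tauex$, take expectations, and exploit the maximality of $\hat a$.

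First I would establish well-posedness of \eqref{eq:randomFBSDE}: the terminal data $G(\tauex,X_{\tauex})$ is bounded by \ref{hp:terminal}; the driver is free of $y$; by \ref{hp:cont_bdd}--\ref{hp:sigmaconst} and compactness of $\supp k$, the map $\hat H(t,X_t,\mu,z)$ is jointly continuous and globally Lipschitz in $z$ with constant $\|\sigma^{-1}\tilde b\|_\infty<\infty$; the continuous selector $\hat a$ is provided by Berge's maximum theorem plus the strict concavity of $f$ in \ref{hp:terminal}. The appendix on BSDEs with bounded random horizon on a general filtered space then yields the unique square-integrable solution $(\hat Y,\hat Z,\hat Z^0,\hat M)$, and the admissibility of $\hat\alpha_t:=\hat a(t,X_t,\mu,\hat Z_t)$ follows at once from continuity of $\hat a$ and $\F$-progressive measurability of $(X,\mu,\hat Z)$.

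Next, for any admissible $\alpha\in\cA$, I would apply Girsanov with density $\sexp(U)_T$ (a genuine martingale since $\sigma^{-1}\tilde b$ is bounded) and substitute $dW_t=dW^\alpha_t+\sigma^{-1}\tilde b(t,X_t,\mu,\alpha_t)\,dt$ in \eqref{eq:randomFBSDE}. Using $H(t,X_t,\mu,\hat Z_t,\hat\alpha_t)=f(\cdot,\hat\alpha_t)+\hat Z_t\sigma^{-1}\tilde b(\cdot,\hat\alpha_t)$ together with the identity $\hat H(t,X_t,\mu,\hat Z_t)=H(t,X_t,\mu,\hat Z_t,\hat\alpha_t)$, the BSDE can be rearranged (modulo the sign convention of the driver used throughout the paper) into
\[
d\hat Y_t=f(t,X_t,\langle h,\mu_t\rangle,\alpha_t)\,dt+\big[H(\cdot,\hat Z_t,\hat\alpha_t)-H(\cdot,\hat Z_t,\alpha_t)\big]\,dt+d(\text{mart under }\P^{\mu,\alpha}),
\]
where the bracketed term is non-negative by the very definition of $\hat a$ as a pointwise maximizer of $H$, and vanishes identically when $\alpha=\hat\alpha$. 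Integrating from $0$ to $\tauex$ and taking $\E^{\mu,\alpha}$, the stochastic integrals against $W^\alpha$ and $W^0$ have zero expectation by the $L^2$ BSDE bound and boundedness of $\sigma^{-1}\tilde b$, while the integral against $\hat M$ vanishes because $U$ is driven solely by $W$, so $\langle U,\hat M\rangle\equiv 0$ and Girsanov preserves the martingale property of $\hat M$. One obtains
\[
\hat Y_0=J^{\mathrm{weak}}(\alpha)+\E^{\mu,\alpha}\!\left[\int_0^{\tauex}\!\big(H(\cdot,\hat Z_t,\hat\alpha_t)-H(\cdot,\hat Z_t,\alpha_t)\big)\,dt\right]\ge J^{\mathrm{weak}}(\alpha),
\]
with equality when $\alpha=\hat\alpha$ $dt\otimes d\P$-a.e.\ (uniqueness of the maximizer coming from strict concavity in \ref{hp:terminal}). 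Since $\hat Y_0$ does not depend on $\alpha$, this proves that $\hat\alpha$ is optimal.

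The main technical obstacle I anticipate is the rigorous transfer of the relevant martingale properties across the change of measure: although $\F$ is a priori strictly larger than the filtration of $(W,W^0,\xi)$, one must argue that $\hat M$ remains a true (not merely local) $\P^{\mu,\alpha}$-martingale and that the $\hat Z,\hat Z^0$ stochastic integrals vanish in expectation. The first point uses that $U$ is a stochastic integral against $W$ alone, so the Radon--Nikodym density has zero bracket with $\hat M$; the second point follows from the $L^2$-estimate for the BSDE combined with a standard localization to upgrade local martingales to true martingales on the stochastic interval $[0,\tauex]$.
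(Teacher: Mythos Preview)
Your proof is correct and follows the classical ``direct verification'' route: you work with the single BSDE \eqref{eq:randomFBSDE}, perform the Girsanov substitution $dW_t=dW^\alpha_t+\sigma^{-1}\tilde b(\cdot,\alpha_t)\,dt$, and exploit the pointwise inequality $H(\cdot,\hat Z_t,\hat\alpha_t)\ge H(\cdot,\hat Z_t,\alpha_t)$ to conclude. The paper, by contrast, introduces for each admissible $\alpha$ an auxiliary BSDE with driver $\Psi^\alpha(\omega,s,z)=f(\cdot,\alpha_s)+z\cdot\sigma^{-1}\tilde b(\cdot,\alpha_s)$, invokes the BSDE \emph{comparison theorem} from the Appendix to obtain $\hat Y\ge Y^\alpha$ under $\P$, and then separately identifies $\E[Y^\alpha_0]$ with $J^{weak}(\alpha)$ by rewriting the $Y^\alpha$-equation under $\P^{\mu,\alpha}$.

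What each approach buys: your argument is more self-contained and avoids both the family of auxiliary BSDEs and the comparison principle, but it front-loads the technical burden you correctly flag at the end, namely that $\hat M$ and the $\hat Z$, $\hat Z^0$ integrals remain true (not merely local) martingales under $\P^{\mu,\alpha}$. The paper's route hides this same difficulty inside the statement ``the last three terms \ldots\ are $\P^{\mu,\alpha}$-martingales'' for the $Y^\alpha$-equation, but by separating the comparison step (done entirely under $\P$) from the identification step, it reduces the change-of-measure issue to the single, linearly-driven BSDE for $Y^\alpha$. One minor point: in your displayed identity you write $\hat Y_0=J^{\mathrm{weak}}(\alpha)+\cdots$, but $\hat Y_0$ is $\cF_0$-measurable and in general random; the correct statement is $\E^{\mu,\alpha}[\hat Y_0]=J^{\mathrm{weak}}(\alpha)+\cdots$, and one then uses (as the paper does) that $\hat Y_0$ has the same law under $\P$ and $\P^{\mu,\alpha}$ to see that the left-hand side is independent of $\alpha$.
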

\begin{proof}

	In the following we will seek for solutions $(Y,Z,Z^0,M)$ to BSDEs of the form
    \begin{eqnarray*}
	Y_{t\wedge \tauex} &=& G(\tauex,\Xtauex)+\int_{t \wedge \tauex}^{\tauex} \Psi(\omega,s,Z_s)ds\\
	&&-\int_{t \wedge \tauex}^{\tauex} Z_s dW_s-\int_{t\wedge \tauex}^{\tauex} Z^0_s dW^0_s -\int_{t\wedge \tauex}^{\tauex} dM_s,
	\end{eqnarray*}
	with drivers $\Psi(\omega,s,z)$ which are Lipschitz continuous in $z$ uniformly in $(\omega,s)$ and bounded as functions of $(\omega,s)$ for every $z$ fixed.
	Recall that the filtration is not assumed to be Brownian, thus an extra martingale term $M$ orthogonal to $(W,W^0)$ is part of the solution.
	For any of such drivers classical results on BSDE, collected for convenience in Theorem \ref{thm:BSDEapp} in the Appendix, guarantee existence and uniqueness of the solution $(Y,Z,Z^0,M)$.
	We choose
	\begin{itemize}
		\item $\Psi(\omega,s,z)=\hat{H}(s,X_s(\omega),\mu(\omega),z)$ with $\hat{H}$ defined in \eqref{eq:existence max};
		\item $\Psi(\omega,s,z)=f(s,X_s,\langle h,\mu_s\rangle,\alpha_s)(\omega)+z\cdot\sigma^{-1}\tilde{b}(s,X_s,\mu,\alpha_s )(\omega)$ with $\alpha$ an admissible control.		
	\end{itemize}
	Both satisfy the required hypothesis and we call $(\hat{Y},\hat{Z},\hat{Z}^0,\hat{M})$ and $(Y^\alpha,Z^\alpha,Z^{0,\alpha},M^\alpha)$ the respective solutions.
    The comparison result of Theorem \ref{thm:BSDEapp} and the definition of $\hat{H}$ yield $\hat{Y}\ge Y^\alpha$ $\P$-a.s.\ for any $\alpha\in\cA$. 
    We now choose the control $\hat{\alpha}_t :=\hat{a}(t,X_t,\mu,\hat{Z}_t)$, where $\hat{a}$ is the continuous function realizing the pointwise maximum of the Hamiltonian as discussed before \eqref{eq:existence max}. By \eqref{eq:existence max}, it holds 
    \[
    \hat{H}(t,X_t,\mu,\hat{Z}_t)=f(t,X_t,\langle h,\mu_t\rangle,\hat{\alpha}_t)+\hat{Z}_t \cdot\sigma^{-1}(t,x)\tilde{b}(t,X_t,\mu,\hat{\alpha}_t )\quad \P\text{-a.s.}
    \]
    so that $(Y^{\hat{\alpha}},\hat{Z},\hat{Z}^0,\hat{M})$ is a solution to the above BSDE with driver $\Psi(\omega,z)=\hat{H}(t,X_t(\omega),\mu(\omega),z)$, i.e. BSDE \eqref{eq:randomFBSDE}.
    By uniqueness of the solution, it must hold 
    \begin{equation}\label{eq:maxZ}
    Y^{\hat{\alpha}}_t=\hat{Y}_t\ge Y^\alpha_t \ \P\text{-a.s.}\qquad \forall t\in [0,T],\ \forall \alpha\in\cA.
    \end{equation}
      Notice that since $\xi$ and $W$ are independent, $\xi$ has the same distribution under both $\P$ and $\P^{\mu,\alpha}$. With respect to the measure $\P^{\mu,\alpha}$, the process  $(X,Y^\alpha,Z^{0,\alpha},Z^\alpha)$ solves the FBSDE
    \begin{eqnarray*}
    	X_t&=& \xi + \int_0 ^t b(s,X_s,\langle h,\mu_s\rangle,\alpha_s)ds+\sigma W^\alpha_t+\sigma^0 W^0_t +\int_0 ^t \eta(s)d\smL_s\\
   	Y^\alpha_{t\wedge \tauex} &=& G(\tauex,\Xtauex)+\int_{t \wedge \tauex}^{\tauex}f(s,X_s,\langle h,\mu_s\rangle,\alpha_s)ds -\int_{t \wedge \tauex}^{\tauex} Z^{\alpha}_s dW^{\alpha}_s-\int_{t \wedge \tauex}^{\tauex} Z^{0,\alpha}_s dW^0_s -\int_{t\wedge \tauex}^{\tauex} dM^{\alpha}_s,
   \end{eqnarray*}
Recall that $\E$ and $\E^\alpha$ denote the expectations under $\P$ and $\P^{\mu,\alpha}$, respectively.
Since the last three terms in the r.h.s. of the equation for $Y^\alpha$ are $\P^{\mu,\alpha}$-martingales,  by taking conditional expectation on both sides, we obtain
\[
Y^\alpha_{t \wedge \tauex}=\E^{\alpha}\left[\int_{t\wedge \tauex}^{\tauex} f(s,X_s,\langle h,\mu_s\rangle,\alpha_s)ds+G(\tauex,\Xtauex)\mid\cF_{t\wedge \tauex}\right].
\]
Since $Y^\alpha (0)$ is $\cF_0$-measurable and $\cF_0$ and $W$ are independent, $Y^\alpha (0)$ has the same law under both $\P$ and $\P^{\mu,\alpha}$, so that we have $\E[Y^\alpha(0)]=\E^{\alpha}[Y^\alpha(0)]$. Moreover, the latter is equal to the payoff function in the weak formulation, namely,
 $\E^{\alpha}[Y^\alpha(0)]=\E^{\alpha}[\int_{0}^{\tauex} f(t,X_t,\langle h,\mu_t\rangle,\alpha_t)dt+G(\tauex,\Xtauex)]=J^{weak}(\alpha)$.
 In particular, the above applies to the control $\hat{\alpha}$ constructed above so that, combining with \eqref{eq:maxZ},
 \[J^{weak}(\hat{\alpha})=\E[Y^{\hat{\alpha}}(0)]\ge \E[Y^\alpha(0)]=J^{weak}(\alpha),
 \]
 which shows that $\hat{\alpha}$ is optimal for the control problem \eqref{eq:MFG_weak}. 
\end{proof}

\subsection{The fixed point argument}\label{sec:fixedpoint}
In this section we implement the fixed point argument restricted to conditional distribution flows taking only finitely many values.
The starting point is again a process $\tilde{X}$, defined on some probability space supporting independent Brownian motions $(\tilde{W},\tilde{W}^0)$ and $\tilde{\xi}$ distributed as $\xi$ from Section \ref{sec:FBSDEopt}, satisfying \begin{equation}\label{eq:basic_proc}
\tilde{X}_t=\tilde{\xi}+\sigma \tilde{W}_t+\sigma^0 \tilde{W}^0_t, \quad t\in[0,T].
\end{equation}
Since we adopt a weak approach, we will work on a suitable canonical space and the only important feature of the processes in \eqref{eq:basic_proc} will be their laws. More specifically, we consider a filtered probability space $(\Omega,\cF,\F,\P)$ of the product form as in Definition \ref{def:admissiblesetup}. For both $i=0,1$ we use the following specifications: $\Omega^i:=\cC([0,T];\R^{d})\times\cC([0,T];\R^{d})$ is the state space and $\F^i$ is the natural filtration generated by the canonical process on $\Omega^i$; for $\P^i$ we take the joint distribution of a process $(\tilde{X},\tilde{W})$ as in \eqref{eq:basic_proc}; $\cF^i$ is the completion of the Borel $\sigma$-algebra of $\Omega^i$ with respect to $\P^i$. We now construct the processes $(X,W,W^0)$ which are relevant to our analysis. 

\begin{itemize}
	\item The pair $(X,W)$ is simply the canonical process on $\Omega^1$. By construction of $\P$, $W$ is a $d$-dimensional Brownian motion independent of $X_0\sim\xi$.
	\item The process $W^0$ is defined on $\Omega^0$ as 
	\[W^0_t(x,w):=(\sigma^0)^{-1}(x_t-x_0-\sigma w_t), \quad (x,w) \in \Omega^0, t\in[0,T].
	\]
	
	By construction of $\P$, $W^0$ is a $d$-dimensional Brownian motion, which is further independent of $(X_0,W)$.
\end{itemize}
We suppose that a partition of the time interval $\{0=t_0<t_1<\cdots< t_{N-1}<t_N=T\}$ and a finite set $\Lambda\subset \R^{d}$ are given.
In addition, we have a process on $\Omega^0$ of the form 
\[V_t(\omega^0)=\sum_{i=1}^{N} v_{i-1}(\omega^0)1_{[t_{i-1},t_i)}(t)+v_{N-1}(\omega^0)1_{\{T\}}(t),\qquad \omega^0\in\Omega^0,
\]
that we suppose adapted to the filtration generated by $W^0$ and such that each $v_{i}$ takes values only in the given finite set $\Lambda$, i.e. $v_{i}: \Omega^0 \to \Lambda$ is $\mathcal F^0 _{t_i}$-measurable, for every $i=0,\ldots, N-1$. We let $\cV:=\{A_1,\ldots,A_{|\cV|}\}$ be the algebra generated by $V$ and suppose that $\P^0(A_k)>0$ for every $k=1,\ldots,|\cV|$. The process $V$ represents a discretization of the Brownian Motion $W^0$; at this stage we only need the above properties, we will consider a specification of $V$ in Section \ref{sec:tight}.

Let $M:=\exp(c^2T)$ where $c$ is an upper bound for $|\sigma^{-1}\tilde{b}|$ with $\tilde{b}$ as in \eqref{eq:btilde}. Note that this exists by \ref{hp:cont_bdd}, \ref{hp:sigmaconst} and Definition \ref{def:kernel} and it depends only on the coefficients of the problem.  Define,
\begin{equation}\label{eq:compact_measures}
\fxspace:=\left\{\P'\in\prob(\Omega^1)\ :\ \P'\ll \P^1,\ \E\left[\big(d\P'/d\P^1\big)^2\right]\le M\right\}.
\end{equation}
We endow $\fxspace$ with the so-called $\tau$-topology, i.e. weakest topology that makes $f\mapsto \int f d\P'$ continuous, for every bounded measurable function $f$ (cf. \cite[Section 6.2]{dembo2010}).
As in the proof of \cite[Proposition 7.8]{CL15}, the set $\fxspace$ is convex, compact and metrizable with respect to the $\tau$-topology.

Suppose that a certain vector of measures $(\fm_1,\ldots,\fm_{|\cV|}) \in \fxspace^{|\cV|}$ is given. We consider $\fM:\Omega^0\to \prob(\Omega_1)$ defined by $\fM(\omega^0):=\sum_{k=1}^{|\cV|}\fm_k1_{A_k}(\omega^0)$ and note that its natural filtration, in the sense of Definition \ref{def:natural}, is contained in the natural filtration of $W^0$. Finally, we construct the random environment $\mu:[0,T]\times\Omega^0\to\subp(\R^{d})$ as prescribed by \eqref{eq:induced_flow}. Observe that $t \mapsto \mu_t (\omega^0)$ is continuous for all $\omega^0 \in \Omega^0$. Indeed, notice first that the function $t \mapsto 1_{\{\tau(x) >t\}}$ is continuous everywhere outside the event $\{\tau(x)=t\}$, which has zero measure with respect to $\mathfrak M^X (\omega^0)$ for all $\omega^0 \in \Omega^0$ (as in \cite[Lemma 6.2]{CF18}). This implies that, for every continuous and bounded function $f$, we have $\int f(x_t)1_{\{\tau(x)>t\}} \mathfrak M^X (\omega^0)(dx)$ is continuous in $t$ for all $\omega^0 \in \Omega^0$.

We can then apply the results of Section \ref{sec:FBSDEopt} to obtain a solution to the FBSDE \eqref{eq:randomFBSDE} on $(\Omega,\cF,\F,\P^\mu)$, for a suitable $\P^\mu=\P^{\mu,\hat{\alpha}}\sim \P$ given by Proposition \ref{prop:exFBSDE}.  
From the measurability of $\fM$, the solution is of the form $(Y,Z,Z^{0})$, i.e. $M\equiv0$. 
We then construct the vector of conditional distributions of $(X,W)$ given the events in $\cV$:
\[ 
\fm'_k(\cdot):=\frac{\P^{\mu}(A_k\times(X,W)^{-1}(\cdot) )}{\P^\mu(A_k\times \Omega^1)},\quad k=1,\ldots, |\cV|.
\]
Recall that the change of measure of Proposition \ref{prop:exFBSDE} is obtained by a Girsanov transformation of $W$ only. In particular, $W^0$ is still a Brownian motion under $\P^\mu$ and it holds $\P^\mu(A_k\times\Omega^1 )=\P^0(A_k)>0$ for every $\mu$. The above definition is thus well-posed and, since $(X,W)$ is the canonical process on $\Omega^1$, we may write
\[ 
\fm'_k(\cdot)=\frac{\P^{\mu}( A_k\times\cdot )}{\P^0(A_k)},\quad \forall\ k=1,\ldots, |\cV|.
\]
We now prove that $\fm'_k\ll \P^1$ for every $k$ and that its Radon-Nikod\'ym derivative is
\[\hat{Z}_k:=\frac{1}{\P^0(A_k)}\int_{\Omega^0} 1_{A_k}\frac{d\P^\mu}{d\P}d\P^0,\quad k=1,\ldots,|\cV|.\]
By its definition and the fact that $\P = \P^0 \otimes \P^1$, it is easy to see that $\fm'_k(B)=\E^{\P^1}[\hat{Z}_k1_B]$ for every $B\in\cF^1$ so we only need to prove $\hat Z_k$'s integrability, for all $k =1,\ldots, |\cV|$.
We prove something stronger, namely, that $\hat Z_k$ is square-integrable with second moment uniformly bounded by $M$ from which, in particular, $\fm'_k\in\fxspace$ as in \eqref{eq:compact_measures}.
To simplify the notation we introduce the measures $\P^{0,k}(\cdot):=\P^0(\cdot\cap A_k)/\P^0(A_k)$, so that $\hat{Z}_k=\int_{\Omega^0} \frac{d\P^\mu}{d\P} d\P^{0,k}$.
By applying Jensen's inequality and Fubini's Theorem, we get,
\[
\int_{\Omega^1}(\hat{Z}_k)^2 d\P^1\le\int_{\Omega^1}d\P^1\int_{\Omega^0}\bigg(\frac{d\P^\mu}{d\P}\bigg)^2 d\P^{0,k}=\int_{\Omega^0}d\P^{0,k}\int_{\Omega^1}\bigg(\frac{d\P^\mu}{d\P}\bigg)^2 d\P^1.
\]
Recall that $d\P^\mu/d\P=\sexp(U)_T$, with $U_T=\int_0^T\sigma^{-1}\tilde{b}(s,X_s,\mu,\alpha_s)dW_s$ for some admissible control $\alpha $. Observe that $U_T$ depends only on the Brownian motion $W=W^1$ so that, for any $\omega^0\in\Omega^0$, it holds $\E^{\P^1}[(\sexp(U)_T)^2]\le \exp(c^2T)=M$ where $M$ is the constant in \eqref{eq:compact_measures}. Combining with the above inequality, we conclude that
\begin{equation}\label{eq:squarebound}
\int_{\Omega^1}(\hat{Z}_k)^2 d\P^1\le\int_{\Omega^0}d\P^{0,k}\int_{\Omega^1}\bigg(\frac{d\P^\mu}{d\P}\bigg)^2 d\P^1\le M.
\end{equation}
We have shown that the map
\begin{align}\label{eq:defPhi}
\Phi:\fxspace^{|\cV|}&\to \fxspace^{|\cV|}\\
(\fm_1,\ldots,\fm_{|\cV|})&\mapsto (\fm'_1,\ldots,\fm'_{|\cV|})\nonumber
\end{align}
is well defined and we now seek for a fixed point by means of Schauder's Theorem. Since $\fxspace^{|\cV|}$ is compact, it is enough to show that $\Phi$ is continuous.
\begin{proposition}
	$\Phi:\fxspace^{|\cV|}\to\fxspace^{|\cV|}$ is continuous.
\end{proposition}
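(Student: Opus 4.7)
The plan is to show that $\Phi$ is sequentially continuous for the $\tau$-topology on $\fxspace^{|\cV|}$ (which is metrizable). Fix a convergent sequence $(\fm_1^n,\ldots,\fm_{|\cV|}^n)\to(\fm_1,\ldots,\fm_{|\cV|})$ and let $\fM^n,\mu^n,\hat\alpha^n,\P^{\mu^n},\fm'^n_k$ (resp.\ $\fM,\mu,\hat\alpha,\P^\mu,\fm'_k$) denote the objects produced by the construction of $\Phi$. Since $\tau$-convergence $\fm'^n_k\to\fm'_k$ amounts to $\int g\,d\fm'^n_k\to\int g\,d\fm'_k$ for every bounded measurable $g:\Omega^1\to\R$, the Radon--Nikod\'ym formula recasts the task as showing, for every such $g$,
\[
\int_{\Omega} 1_{A_k}(\omega^0)\,g(\omega^1)\,\sexp(U^n)_T\,d\P\to\int_{\Omega} 1_{A_k}(\omega^0)\,g(\omega^1)\,\sexp(U)_T\,d\P.
\]
The uniform $L^2(\P)$ bound from \eqref{eq:squarebound} on $\sexp(U^n)_T$ forces uniform integrability, so it is enough to prove $\sexp(U^n)_T\to\sexp(U)_T$ in probability.

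First I would transfer $\tau$-convergence from $\fm_k^n$ to $\mu^n$. The piecewise-constant structure $\fM^n(\omega^0)=\sum_k\fm_k^n 1_{A_k}(\omega^0)$ immediately yields $\fM^n(\omega^0)\to\fM(\omega^0)$ in $\tau$ for every $\omega^0$. Applied to the bounded measurable function $x\mapsto g(x_t)1_{\{\tau(x)>t\}}$ with $g\in C_b(\R^d)$, this gives, for every fixed $(t,\omega^0)$, the weak convergence $\mu_t^n(\omega^0)\to\mu_t(\omega^0)$ of sub-probabilities on $\R^d$. Plugging this into \eqref{eq:btilde}, dominated convergence (using $\langle 1,\mu_s^n\rangle\le 1$ and the smoothness and compact support of $k$) gives pointwise convergence $\tilde{b}(s,x,\mu^n(\omega^0),a)\to\tilde{b}(s,x,\mu(\omega^0),a)$ for every $(s,x,a,\omega^0)$, and by Berge's theorem also $\hat H(s,X_s(\omega^1),\mu^n(\omega^0),z)\to\hat H(s,X_s(\omega^1),\mu(\omega^0),z)$ pointwise in $(s,\omega,z)$.

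The core step is the stability of the BSDE \eqref{eq:randomFBSDE}. The drivers $\Psi^n(\omega,s,z):=\hat H(s,X_s(\omega),\mu^n(\omega),z)$ are uniformly bounded and uniformly Lipschitz in $z$ (with constant controlled by $|\sigma^{-1}\tilde b|_\infty$), and the terminal condition $G(\tauex,\Xtauex)$ does not depend on $n$ because $\tauex$ and $X$ are independent of $\mu$. Theorem \ref{thm:BSDEapp}, combined with dominated convergence applied to the driver increment $\Psi^n(\cdot,\cdot,\hat Z)-\Psi(\cdot,\cdot,\hat Z)$, yields convergence of the solutions $(\hat Y^n,\hat Z^n,\hat Z^{0,n})\to(\hat Y,\hat Z,\hat Z^0)$ in $\mathcal S^2\times L^2(dt\otimes d\P)\times L^2(dt\otimes d\P)$. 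Passing to a subsequence, $\hat Z^n\to\hat Z$ pointwise $dt\otimes d\P$-a.e., and continuity of $\hat a$ gives $\hat\alpha_t^n=\hat a(t,X_t,\mu^n,\hat Z_t^n)\to\hat\alpha_t$ a.e.\ along that subsequence. Boundedness of $\tilde b$ and dominated convergence upgrade this to $\tilde b(\cdot,X,\mu^n,\hat\alpha^n)\to\tilde b(\cdot,X,\mu,\hat\alpha)$ in $L^2(dt\otimes d\P)$; It\^o isometry then gives $U^n_T\to U_T$ in $L^2(\P)$, and hence $\sexp(U^n)_T\to\sexp(U)_T$ in probability. The standard subsequence argument (using uniqueness of the limit) transfers this to the full sequence, completing the proof.

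I expect the main obstacle to be the BSDE stability step, and in particular the interplay between the regularity of $\hat a$ and the mode of convergence of $\mu^n$. Continuity of $\hat a$ is formulated on $\cC([0,T];\subp(\R^d))$ with the uniform topology, whereas the environment convergence we obtain from the $\tau$-topology is only pointwise in $t$. One therefore has to exploit the explicit form of the dependence of $\tilde b$ on $\mu$, which enters only through $\mu_t$ and the integral $\int_0^t k'(t-s)\langle 1,\mu_s\rangle\,ds$, so that pointwise convergence of $\mu^n$ is sufficient to drive convergence of the Hamiltonian and its maximizer, and in turn of the BSDE solutions and the Girsanov densities.
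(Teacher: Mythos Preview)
Your proposal is correct and follows essentially the same route as the paper: pointwise $\tau$-convergence of $\fM^n$ gives convergence of the scalar moments $\langle h,\mu^n_t\rangle$ and $\langle 1,\mu^n_t\rangle$, BSDE stability transfers this to $\hat Z^n\to\hat Z$ in $L^2(dt\otimes d\P)$, continuity of $\hat a$ then yields $\hat\alpha^n\to\hat\alpha$ in measure, and finally the Girsanov densities converge. The only cosmetic difference is in the packaging of the last step: the paper computes the relative entropy $\cH(\P^\infty\mid\P^n)=\tfrac12\E^\infty[\int_0^T|\sigma^{-1}\Delta^n\tilde b_t|^2\,dt]\to 0$ and invokes Pinsker's inequality to get total variation (hence $\tau$-) convergence of $\fm'^n_k$, whereas you go via $U^n_T\to U_T$ in $L^2$ plus the uniform $L^2$ bound \eqref{eq:squarebound} on $\sexp(U^n)_T$; both arguments are equivalent and rest on the same $L^2$ convergence of the drift increment. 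One minor point: Theorem~\ref{thm:BSDEapp} as stated in the appendix only records existence, uniqueness and comparison, not stability; the paper invokes a separate stability estimate (\cite[Theorem 2.4]{BriandHu}) for the step $\E[\int_0^{\tauex}|\hat Z^n_t-\hat Z_t|^2\,dt]\to 0$, so you should cite that (or the standard a priori estimate) rather than Theorem~\ref{thm:BSDEapp}.
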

\begin{proof}
	Consider a sequence $\{\fm^n_k\}_{n\in\N} \subset \fxspace$ converging to some $\fm^\infty_k\in\fxspace$ in the $\tau$-topology, for any $k=1,\ldots,|\cV|$. 
	For any $n\in\N\cup\{\infty\}$, $\fM^n:=\sum_{k=1}^{|\cV|}\fm^n_k1_{A_k}$ yields an input $(\mu^n_t)_{t\in[0,T]}$ to the FBSDE \eqref{eq:randomFBSDE} as described in equation \eqref{eq:induced_flow}. Observe that $t \mapsto \mu^n _t (\omega^0)$ is continuous for all $\omega^0 \in \Omega^0$ by the same argument as above. From Proposition \ref{prop:exFBSDE}, there exists a solution $(X^n,Y^n,Z^n,Z^{0,n})$ (recall $M^n\equiv 0$ from the measurability of $\fM^n$) on a probability space $(\Omega,\cF,\F,\P^n)$, where $\P^n\sim \P$ is constructed via a suitable Girsanov transformation.
	By construction of $\P^n$ for $n\in\N\cup\{\infty\}$, we have $\frac{d\P^n}{d\P^\infty}=U^n_T$, where
	\[
	U^n_t:=\exp\left(\int_0^t\sigma^{-1}\Delta^n \tilde{b}_s dW_s-\frac{1}{2}\int_0^t |\sigma^{-1}\Delta^n \tilde{b}_s|^2ds\right)
	\]
	and $\Delta^n \tilde{b}_t:=b(t,X_t,\langle h,\mu^n_t\rangle,\alpha^n_t)-b(t,X_t,\langle h,\mu^\infty_t\rangle,\alpha^\infty_t)+k(0)\langle 1,\mu^n_t-\mu^\infty_t\rangle+\int_0^tk'(t-s)\langle 1, \mu^n_s-\mu^\infty_s\rangle ds$.
	We claim that $\Delta^n \tilde{b}\to 0$ in $\lambda\otimes \P^\infty$-measure, where $\lambda$ is the Lebesgue measure on $[0,T]$. Since $\sigma^{-1}\tilde{b}$ is bounded, we deduce
	\[
	\cH(\P^{\infty}\mid\P^n)=-\E^\infty[\log(d\P^n/d\P^\infty)]=\frac{1}{2}\E^\infty\left[\int_0^{T}|\sigma^{-1}\Delta^n \tilde{b}_t|^2 dt\right]\to 0,\quad n\to \infty,
	\]
	where $\cH$ denotes the relative entropy.
	By Pinsker's inequality, $\P^n$ converges to $\P^\infty$ in total variation and, in particular, the same is true for $\fm'_k(\cdot):=\P^n(A_k\times \cdot )/\P^0(A_k)$ for every $k=1,\ldots,|\cV|$. The thesis follows from the fact that total variation convergence implies convergence in the $\tau$-topology.
	
	It remains to prove the claim. 
	We denote by 
	\[
	\ell^n(t,\omega^0):=\langle h,\mu^n_t(\omega^0)\rangle=\sum_{k=1}^{|\cV|}1_{A_k}(\omega^0)\int_{\cC([0,T];\R^{d})} h(x_t)1_{\{\tau(x)>t\}}\fm^n_k(dx),\qquad \omega^0\in\Omega^0,
	\]
	recalling from Definition \ref{def:liftedMFGeq} that $\tau(x)=\inf\{t\in[0,T] : x_t\notin \cO\}$.
	Since $h$ is bounded by \ref{hp:cont_bdd} and $\fm^n_k\to\fm^\infty_k$ in the $\tau$-topology, the sequence $\ell^n$ converges pointwise to $\ell^\infty$. In particular, it also converges in  $\lambda\otimes \P^\infty$-measure. Similarly, $\langle 1, \mu^n_s-\mu^\infty_s\rangle$ converges pointwise to $0$ from which we immediately deduce that $k(0)\langle 1,\mu^n_t-\mu^\infty_t\rangle+\int_0^tk'(t-s)\langle 1, \mu^n_s-\mu^\infty_s\rangle ds\to 0$  in  $\lambda\otimes \P^\infty$-measure. This proves one part of the claim. 
	
	Next, we prove that the optimal control $\hat{\alpha}^n _t:=\hat{a}(t,X_t,\mu^n,Z^n_t)$ converges in $\lambda\otimes \P^\infty$-measure to $\hat{\alpha}_t ^\infty :=\hat{a}(t,X_t,\mu^\infty,Z^\infty_t)$.
	Recall that $\hat{a}: [0,T]\times \R^{d} \times \cC([0,T];\subp(\R^d))\times\R^{d}\to A$ as in \eqref{eq:existence max} is a continuous function so that we only need to show 
	\[\lim_{n\to\infty}\E^{\infty}\left[\int_0^{\tauex}|Z^n_t-Z^\infty_t|^2dt\right]=0,
	\]
	which, from stability results for BSDE (e.g. \cite[Theorem 2.4]{BriandHu}), is implied by 
	\begin{equation}\label{eq:stabilityBSDE}
	\E^{\infty}\left[\int_0^{\tauex}|\hat{H}(t,X_t,\mu^n,Z^{\infty}_t)-\hat{H}(t,X_t,\mu^\infty,Z^{\infty}_t)|^2dt\right]\to 0.
	\end{equation}
	Denote by $|\Delta^n \hat{H}_t| ^2$ the above integrand and recall that $\hat{H}$ is continuous by Berge's maximum theorem. Moreover, $\hat{H}$ depends on $\mu^n$ only through the scalar functions $\ell^n$ and  $\langle 1, \mu^n\rangle$. From the pointwise convergence of $\ell^n$ and $\langle 1, \mu^n\rangle$, we have $\Delta^n \hat{H}_t\to 0$ pointwise as $n\to\infty$. Moreover, assumption \ref{hp:cont_bdd} and the definition of $\hat{H}$ imply that, for some constant $c>0$, $|\Delta^n\hat{H}_t|^2 \le c(1+|Z^\infty_t|^2)$ where the latter is $\lambda\otimes \P^{\infty}$ integrable by definition of solution. By the dominated convergence theorem \eqref{eq:stabilityBSDE} follows.
	
	Since $b$ is continuous and bounded by \ref{hp:cont_bdd} and $(\ell^n,\alpha^n)$ converges in $\lambda\otimes \P^\infty$-measure to $(\ell^\infty,\hat{\alpha}^\infty)$, the claim follows. 
\end{proof}
\begin{corollary}\label{cor:fixedpointPhi}
	The map $\Phi$ defined in \eqref{eq:defPhi} admits a fixed point  $(\hat{\fm}_1,\ldots,\hat{\fm}_{|\cV|})$.
\end{corollary}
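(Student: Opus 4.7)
The proof is essentially an immediate application of Schauder's fixed point theorem (more precisely, the Schauder--Tychonoff extension to locally convex spaces), so the plan is simply to verify its three hypotheses on the product space $\fxspace^{|\cV|}$.

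First I would recall that $\fxspace$ is convex: if $\P', \P'' \in \fxspace$ and $\lambda\in[0,1]$, then $\lambda \P' + (1-\lambda)\P''$ is still absolutely continuous with respect to $\P^1$, and the second moment of its density is controlled via Jensen's inequality (or by the convexity of $r\mapsto r^2$) by $\lambda \E[(d\P'/d\P^1)^2] + (1-\lambda)\E[(d\P''/d\P^1)^2]\le M$. Combined with the compactness and metrizability of $\fxspace$ in the $\tau$-topology cited from \cite[Proposition 7.8]{CL15}, passing to the $|\cV|$-fold product (with the product $\tau$-topology) preserves all three properties by Tychonoff's theorem. Thus $\fxspace^{|\cV|}$ is a nonempty, convex, compact, metrizable subset of the locally convex Hausdorff space $(\meas(\Omega^1))^{|\cV|}$ equipped with the $\tau$-topology componentwise.

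Next, the self-map property of $\Phi$ has already been established through the bound \eqref{eq:squarebound}: for any input $(\fm_1,\dots,\fm_{|\cV|})\in\fxspace^{|\cV|}$, the image $(\fm'_1,\dots,\fm'_{|\cV|})$ again satisfies $\fm'_k \ll \P^1$ with second moment bounded by $M$, hence lies in $\fxspace^{|\cV|}$. Continuity of $\Phi$ in the $\tau$-topology was proved in the preceding proposition.

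The conclusion then follows at once from Schauder's fixed point theorem applied to $\Phi:\fxspace^{|\cV|}\to\fxspace^{|\cV|}$, producing a fixed point $(\hat{\fm}_1,\dots,\hat{\fm}_{|\cV|})$. There is no real obstacle here; the substantive work has been done in proving continuity and in the choice of the $\tau$-topology, which is precisely the one making the set compact while being compatible with the linear structure needed to invoke the theorem.
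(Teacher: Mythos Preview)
Your proposal is correct and follows essentially the same approach as the paper: the paper explicitly states, just before the continuity proposition, that since $\fxspace^{|\cV|}$ is compact (with $\fxspace$ convex, compact and metrizable in the $\tau$-topology), it suffices to show $\Phi$ is continuous in order to apply Schauder's theorem. You have simply spelled out in a bit more detail the verification of convexity and the passage to the product, which the paper leaves implicit.
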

\begin{remark}\label{rmk:set-valued}
	In order to prove the existence of a fixed point for $\Phi$ we used that $\hat{a}$ as in \eqref{eq:existence max} is a continuous function. This is a consequence of Berge's Theorem and the fact that $\hat{A}(\theta):=\{a\in A : H(\theta,a)=\hat{H}(\theta)\}$ is a singleton, for each $\theta=(t,x,m,z)$, thanks to the strict concavity of $f$ from \ref{hp:terminal}. This assumption can be relaxed to $\hat{A}(\theta)$ being a concave set (not necessarily a singleton) for each $\theta$. The only difference in the proof is that the map $\Phi$ would need to depend on an extra variable representing a given control and that we would need to use a set-valued fixed point theorem as that of \cite[Proposition 7.4]{CL15}. 
\end{remark}
\subsection{Tightness results}\label{sec:tight}
We continue to work in the setting of Section \ref{sec:fixedpoint}, in particular, we work on a probability space $(\Omega,\cF,\F,\P)$ of product type, supporting a Brownian Motion $W^0$ with the role of the common noise. We consider a sequence of discretization of $W^0$ with the following properties: 
$\{V^n\}_{n\in\N}$ is a collection of processes on $\Omega^0$ of the form 
\[V^n_t(\omega^0)=\sum_{i=1}^{2^n} v_{i-1}(\omega^0)1_{[t_{i-1},t_i)}(t)+v_{2^n-1}(\omega^0)1_{\{T\}}(t),
\]
where, for every $i=0,\ldots,2^n-1$, $t_i=iT/{2^n}$ and $v_i$ is $\cF^0_{t_i}$-measurable and takes values in a prescribed finite set (dependent on $n$). Moreover, we also require that on the event $\{\sup_{0\le t\le T} |W^0_t|\le 4^n-1\}$, $V^n$ satisfies
\begin{equation}\label{eq:discret_points}
\left|V_{t_i}^n-W^0_{t_i}\right|\le \frac{1}{2^n},\quad \forall n\in\N.
\end{equation}
We can easily construct a process with the above properties with the help of the projection map $\Pi^{(n)}:\R^d\to\R^d$ defined componentwise, for $i=1,\ldots d$, by $\Pi^{(n)}_{i}(x)=4^{-n}\lfloor4^nx_i\rfloor$ for $|x_i|\le 4^n$ and $\Pi^{(n)}_i(x)=4^n \sign(x_i)$ for $|x_i|> 4^n$. Indeed, we can set $v_0=0$ and proceed iteratively by projecting the increments of $W^0$, namely, $v_i=v_{i-1}+\Pi^{(n)}(W^0_{t_i}-W^0_{t_{i-1}})$ for $i=1,\ldots,2^n-1$.

Let $\cV^n:=\{A^n_1,\ldots,A^n_{|\cV^n|}\}$ be the finite $\sigma$-algebra generated by $V^n$ and note that $\P^0(A^n_k)>0$ for every $k=1,\ldots,|\cV^n|$. Let $(\hat{\fm}^n_1,\ldots,\hat{\fm}^n_{|\cV^n|})$ be a fixed point of $\Phi$ from Corollary \ref{cor:fixedpointPhi}.
We define 
\begin{equation}\label{eq:fMn}
	\fM^n(\omega^0):=\sum_{k=1}^{|\cV^n|}\hat{\fm}^n_k1_{A^n_k}(\omega^0),\qquad \omega^0\in\Omega^0.
\end{equation}  By constructing $\mu^n$ as in \eqref{eq:induced_flow}, Proposition \ref{prop:exFBSDE} guarantees the existence of an optimal control $\hat{\alpha}^n$ for \eqref{eq:MFG_state}, \eqref{eq:MFG_cost} under the measure $\P^{\mu^n,\hat{\alpha}^n}\in\prob(\Omega)$.
The fact that $(\hat{\fm}^n_1,\ldots,\hat{\fm}^n_{|\cV^n|})$ is a fixed point guarantees that
  \begin{equation}\label{eq:mnkfixpoint}
  \hat{\fm}^n_k(\cdot)=\P^{\mu^n,\hat{\alpha}^n}((X,W)^{-1}(\cdot)\mid A^n_k),\qquad  \forall k=1,\ldots, |\cV_n|.
  \end{equation}
We then set  
\begin{equation}\label{eq:fAn}
	\fA^n(\omega^0):=\law\left(\delta_{\hat{\alpha}_t(\omega^0,\cdot)} dt\right),\qquad \omega^0\in\Omega^0.
\end{equation} 
Note that $\env^n:=(\fM^n,\fA^n)$ is a random variable with values in $\prob(\Omega)\times \prob(\cM([0,T]\times A))$. 
\begin{notation}\label{notation:Pmu}
	To ease the notation we will simply denote $\P^n:=\P^{\mu^n,\hat{\alpha}^n}$ the measure associated to the input $\mu^n$ and the optimal control $\hat{\alpha}^n$ obtained from Proposition \ref{prop:exFBSDE}.
\end{notation}
\begin{lemma}\label{lem:tight}
	The sequence $\P^n\circ(X,W,W^0,\env^n)^{-1}$ is tight in the space of probability measures on the canonical space $\cC([0,T];\R^{3d})\times\prob(\cC([0,T];\R^{2d}))\times\prob(\cM([0,T]\times A))$. 
\end{lemma}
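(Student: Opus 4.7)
The plan is to show tightness of each of the three marginals separately, since for a finite product of Polish spaces marginal tightness implies joint tightness by Prokhorov's theorem.

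For the trajectory factor $(X,W,W^0)$ I would rely on the Girsanov representation valid under $\P^n=\P^{\mu^n,\hat\alpha^n}$, under which $W^0$ and $W^{\hat\alpha^n}$ are Brownian motions and
\[X_t=\xi+\int_0^t b(s,X_s,\langle h,\mu^n_s\rangle,\hat\alpha^n_s)\,ds+\sigma W^{\hat\alpha^n}_t+\sigma^0 W^0_t+\int_0^t \eta(s)\,d\smL^n_s.\]
Since $b$, $\eta$ and the derivative of $\smL^n$ are bounded uniformly in $n$ (by \ref{hp:cont_bdd}, Definition \ref{def:kernel} and the explicit formula for $(\smL^N)'$ derived in Section \ref{sec:MFG}), standard fourth-moment estimates $\E^n[|X_t-X_s|^4]\le C(t-s)^2$ with $C$ independent of $n$, together with Kolmogorov's criterion, yield tightness of the $X$-marginal. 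For $W$ under $\P^n$, writing $W=W^{\hat\alpha^n}+\int_0^\cdot \sigma^{-1}\tilde b(s,X_s,\mu^n,\hat\alpha^n_s)\,ds$ displays $W$ as a Brownian motion plus a uniformly bounded drift, so the same type of bound applies. Finally $W^0$ is trivially tight as a Brownian motion.

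For $\fA^n$, I would note that the relaxed control space $\Gamma=\{q\in\cM([0,T]\times A): q(dt,da)=dt\,q_t(da)\}$ is compact in $\cM([0,T]\times A)$ since $A$ is compact and the time marginal is pinned to be the Lebesgue measure. Consequently $\prob(\Gamma)\subset\prob(\cM([0,T]\times A))$ is compact, and tightness of $\{\P^n\circ(\fA^n)^{-1}\}$ is automatic.

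For $\fM^n$ I would invoke the standard criterion that, on a Polish space $E$, tightness of a sequence $\{\nu_n\}\subset\prob(\prob(E))$ follows from tightness of the mean measures $m_n(B):=\int\rho(B)\,\nu_n(d\rho)$. Here $E=\cC([0,T];\R^{2d})$ and the mean measure of $\fM^n$ under $\P^n$ can be identified via the fixed-point identity \eqref{eq:mnkfixpoint}: since the Girsanov transformation acts only on the $W$-coordinate while $A^n_k\in\sigma(W^0)$, we have $\P^n(A^n_k)=\P^0(A^n_k)$, whence
\[\E^n[\fM^n(B)]=\sum_{k=1}^{|\cV^n|}\P^0(A^n_k)\,\hat{\fm}^n_k(B)=\sum_{k=1}^{|\cV^n|}\P^n\big(A^n_k\cap\{(X,W)\in B\}\big)=\P^n\circ(X,W)^{-1}(B),\]
which is tight by the first step. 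The main obstacle I anticipate is precisely this last identification: one must correctly combine the fixed-point property with the invariance of the $W^0$-sigma-algebra under the Girsanov change of measure in order to reduce tightness of the random-measure marginal to that of the trajectory marginal. Everything else then reduces to routine Kolmogorov-type bounds enabled by the uniform boundedness hypotheses in Assumption \ref{ass:alltogether} and Definition \ref{def:kernel}.
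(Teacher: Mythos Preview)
Your proposal is correct and follows essentially the same strategy as the paper: reduce to marginal tightness, use a moment/increment criterion for the path components, compactness of $A$ for $\fA^n$, and the mean-measure criterion for $\fM^n$. The differences are only cosmetic: you use Kolmogorov's criterion where the paper uses Aldous', and you invoke the mean-measure tightness criterion as a black box whereas the paper spells out the same argument (compact sets $K_j$, Markov's inequality) explicitly. Your treatment of $W$ under $\P^n$ as a Brownian motion plus bounded drift is in fact slightly more careful than the paper's one-line dismissal, but leads to the same conclusion.
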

\begin{proof}
	We only need to prove the tightness of the marginals. For $\P^n\circ(W,W^0)^{-1}$ there is nothing to show since $W,W^0$ are independent Brownian motions under $\P^n$, for all $n \in \N$. 
		We prove the tightness of $\{\P^n\circ X^{-1}\}_{n\in\N}$ by means of Aldous' criterion.
		Recall that, the dynamics of $X$ with respect to $\P^n$ is given by
		\begin{equation}\label{eq:dynamicsn}
			dX_t=b(t,X_t,\langle h,\mu^n_t\rangle,\hat{\alpha}^n_t)dt+\sigma dW_t+\sigma^0 dW^0_t +\eta(t)d\smL^n_t.
		\end{equation}
		Moreover, the process $N^n:=(N^n_t)_{t\in[0,T]}$ defined by $N^n_t(\omega):=\int_0^t\eta(s)d\smL^n_s(\omega)$, where $\smL^n_s(\omega):=\int_0^sk(s-u)\langle 1,\mu^n_u\rangle(\omega) du$ for any $s\in[0,T]$,
		is uniformly bounded on $[0,T]$ since, for any $\F^n$-stopping times $0\le\rho\le \tau\le T$ $\P^n$-a.s.,
		\begin{equation}\label{eq:boundjumps}
		N^n_\tau-N^n_\rho\le \max_{0\le t\le T}\eta(t)\big(\smL^n_\tau-\smL^n_\rho\big)\le \max_{0\le t \le T}\eta(t)\max_{0\le t\le T}k(t)\int_\rho^\tau\langle 1,\mu^n_t\rangle dt\le C(\tau-\rho),
		\end{equation}
		for some $C>0$ dependent only on assumption \ref{hp:cont_bdd} and Definition \ref{def:kernel}. 
		Together with \ref{hp:cont_bdd}, it clearly implies
		\[
		\sup_{0\le t\le T}\E^n|X_t|^2\le C,
		\]
		for some possibly larger constant that we still denote by $C$. Using Markov's inequality, we deduce that for any $\varepsilon>0$ there exists $K>0$ such that, for any $n\in\N$,		
			\[\P^n\left(\sup_{0\le t\le T}|X_t|>K\right)\le \varepsilon,
			\]
			which is the first requirement for applying \cite[Theorem VI.4.5]{bookJacod}.
			The second requirement is obtained using again Markov's inequality and the following estimation. 
			Let $\rho\le \tau$, $\F^n$-stopping times with $\tau-\rho\le 1$.
	\begin{eqnarray*}
		\E^n |X_{\tau}-X_{\rho}|
		&\le& \E^n\left[\int_\rho^{\tau}|b(s,X_s,\langle h,\mu^n_s\rangle,\hat{\alpha}^n_s)|ds+\left|\int_\rho^{\tau}\eta_sd\smL^n_s\right|\right]\\
		&&+\E^n\left|\int_\rho^{\tau}\sigma dW_s+\int_\rho^{\tau}\sigma^0 dW^0_s\right|,\\
		&\le & C \sqrt{\tau-\rho}
	\end{eqnarray*}
	where we used \ref{hp:cont_bdd} and \eqref{eq:boundjumps}.
	By Aldous' criterion $\{\P^n \circ X^{-1}\}_{n\in\N}$ is tight. 
	
	As $A$ is compact, $\{\P^n\circ (\fA^n)^{-1}\}_{n\in \N}$ is clearly tight. Thus, it remains to show that $\{\P^n\circ (\fM^n)^{-1}\}_{n \in \N}$ is tight. The argument is exactly the one used in the proof of \cite[(Vol II) Lemma 3.16]{bookMFG}, we provide it for completeness. Fix $\varepsilon>0$. From the first part of the proof $\P^n\circ (X,W)^{-1}$ is tight so that for an arbitrary $j\in\N$ we can find a compact set $K_j\subset \cC([0,T];\R^{2d})$ such that $\P^n((X,W)\notin K_j)\le \varepsilon /4^j$ for every $n\in\N$. Since $\fM^n$ is a version of the conditional distribution of $(X,W)$ given $\sigma(\cV^n)$, we have
	\[ \sup_{n\in \N} \E^n\big[\fM^n\big(K^C_j\big)\big]=\sup_{n\in \N} \P^n((X,W)\notin K_j)\le \frac{\varepsilon}{4^j}.
	\]
	The set $\cK:=\{\Q\in\prob(\cC[0,T];\R^{2d})\ : \ \Q(K^C_j)\le 2^{-j}\ \ \forall j\in\N\}$ is relatively compact. Moreover, we can use the above estimation and Markov's inequality to deduce
	\[
	\sup_{n\in \N} \P^n(\fM^n\notin \cK)\le\sup_{n\in \N}\sum_{j\in \N} \P^n\big(\fM^n\big(K^C_j\big)> 2^{-j}\big)\le 2\varepsilon.
	\]
	Since $\varepsilon>0$ is arbitrary, the thesis follows. 
\end{proof}
Lemma \ref{lem:tight} implies that the sequence of probability measures $\P^n\circ(X,W,W^0,\env^n)^{-1}$, whose canonical space is $\cC([0,T];\R^{2d})\times \cC([0,T];\R^{d})\times \prob(\cC([0,T];\R^{2d}))\times \prob(\cM([0,T]\times A))$, admits a convergent subsequence. We denote by $\P^\infty\circ(X,W,W^0,\env^\infty)^{-1}$ its weak limit and we show next some of its crucial properties.
Since we are considering convergence in distribution, the limiting optimal control will take the form of a relaxed control. Indeed, equation \eqref{eq:fAn} reveals that $\fA^n$ is the conditional law of the random measure $\delta_{\hat{\alpha}_t^n} dt$. To recover the (unconditional) law of the process one simply take the integral $\int_{\Omega^0}\fA^n d\P^n$.  
\begin{lemma}\label{lem:limit_cond}
Denote by $\P^\infty\circ(X,W,W^0,\env^\infty)^{-1}$ the weak limit of a convergent subsequence of $\P^n\circ(X,W,W^0,\env^n)^{-1}$.
	Then, $\fM^\infty$ is a version of the conditional distribution of $(X,W^\infty)$ given $(W^0,\env^\infty)$. Similarly, $\fA^\infty$ is a version of the conditional distribution of $\int_{\Omega^0}\fA^\infty d\P^\infty$ given $(W^0,\env^\infty)$.
\end{lemma}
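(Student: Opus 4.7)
The plan is to characterise $\fM^\infty$ as the stated regular conditional distribution by a test-function argument combined with the pre-limit identity \eqref{eq:mnkfixpoint}, and then to repeat the scheme to obtain the corresponding statement for $\fA^\infty$.

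By a standard monotone class reduction, proving the first claim amounts to verifying that for every bounded continuous $\phi$ on $\cC([0,T];\R^{2d})$ and every bounded continuous $F$ on $\cC([0,T];\R^{d})\times\prob(\cC([0,T];\R^{2d}))\times\prob(\cM([0,T]\times A))$ one has
\[
\E^\infty\bigl[\phi(X,W)\,F(W^0,\fM^\infty,\fA^\infty)\bigr]=\E^\infty\bigl[\langle\phi,\fM^\infty\rangle\,F(W^0,\fM^\infty,\fA^\infty)\bigr].
\]
The weak convergence $\P^n\circ(X,W,W^0,\env^n)^{-1}\Rightarrow\P^\infty\circ(X,W,W^0,\env^\infty)^{-1}$ provided by Lemma \ref{lem:tight}, combined with the boundedness and joint continuity of the two test functionals, reduces this to showing
\[
\E^n\bigl[(\phi(X,W)-\langle\phi,\fM^n\rangle)\,F(W^0,\fM^n,\fA^n)\bigr]\longrightarrow 0.
\]

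The pre-limit identity \eqref{eq:mnkfixpoint} tells us that $\fM^n$ is a version of the conditional law of $(X,W)$ given $\sigma(V^n)$ under $\P^n$, so $\E^n[(\phi(X,W)-\langle\phi,\fM^n\rangle)\Psi]=0$ for every bounded $\sigma(V^n)$-measurable $\Psi$. The task is therefore to approximate $F(W^0,\fM^n,\fA^n)$ by a $\sigma(V^n)$-measurable test function with vanishing error. Two sources of non-measurability must be tamed. First, $W^0$ is replaced by the discretisation $V^n$ in the first slot of $F$: by \eqref{eq:discret_points} one has $\sup_{t\le T}|V^n_t-W^0_t|\to 0$ on the events $\{\sup_{t\le T}|W^0_t|\le 4^n-1\}$, whose probability tends to one, so boundedness and uniform continuity of $F$ on bounded sets give an $o(1)$ error. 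Second, while $\fM^n$ is $\sigma(V^n)$-measurable by construction, $\fA^n$ is only $\cF^0$-measurable, because $\hat{\alpha}^n$ depends on $W^0$ through $X$ and $Z^n$; one replaces $\fA^n$ by its $\sigma(V^n)$-conditional expectation $\widetilde{\fA}^n$ (defined as a regular conditional kernel on $\cM([0,T]\times A)$) and bounds the residue using martingale convergence along the refining filtration $\sigma(V^n)\uparrow\sigma(W^0)$, with the uniform $L^2$-bound on $d\P^n/d\P$ given by \eqref{eq:compact_measures} providing the required uniform integrability.

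The second part of the lemma, concerning $\fA^\infty$, is proved by the same scheme applied to the random relaxed control $\delta_{\hat{\alpha}_t^n}dt$ in place of $(X,W)$. The pre-limit input is the tautology from \eqref{eq:fAn} that $\fA^n(\omega^0)$ is the regular conditional law (on $\Omega^1$) of $\delta_{\hat{\alpha}_t^n}dt$ given $\cF^0$ under $\P^n$, whose unconditional marginal is $\int_{\Omega^0}\fA^n d\P^n$; the corresponding test-function identity at level $n$ therefore holds for any $\sigma(V^n)$-measurable $\Psi$, and weak convergence together with the same two-step approximation yields the limit statement. The main technical obstacle throughout is precisely the control of the error produced by $\fA^n$ failing to be $\sigma(V^n)$-measurable: this is the most delicate point of the argument, and the uniform-in-$n$ density bound furnished by \eqref{eq:compact_measures} is the crucial ingredient that makes the approximation close rigorously.
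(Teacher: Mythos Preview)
Your overall scheme---pass the conditional-law identity through the weak limit using bounded continuous test functions, replacing $W^0$ by its discretisation $V^n$ via \eqref{eq:discret_points}---is exactly what the paper does. Where you diverge is in flagging the failure of $\fA^n$ to be $\sigma(V^n)$-measurable as the crux of the argument. This concern is misplaced: $\fA^n$ \emph{is} $\sigma(V^n)$-measurable, and the paper's proof simply uses this to assert $l_n=r_n$ directly with the $\sigma(V^n)$-measurable test function $h^0(V^n,\env^n)$.

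The reason is contained in the sentence immediately after the application of Proposition~\ref{prop:exFBSDE} in Section~\ref{sec:fixedpoint}: ``From the measurability of $\fM$, the solution is of the form $(Y,Z,Z^{0})$, i.e.\ $M\equiv0$.'' Since in the product-space setup $X$ and $W$ are the canonical processes on $\Omega^1$ (so independent of $\omega^0$), the only $\omega^0$-dependence of the BSDE data in \eqref{eq:randomFBSDE} enters through $\mu^n$, which is constant on each $A^n_k$. Solving the BSDE separately on each $A^n_k$ and invoking uniqueness shows that $\hat Z^n$, hence $\hat\alpha^n=\hat a(\cdot,X,\mu^n,\hat Z^n)$, depends on $\omega^0$ only through $V^n$. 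Consequently $\fA^n(\omega^0)=\law(\delta_{\hat\alpha^n(\omega^0,\cdot)}dt)$ is constant on each $A^n_k$, i.e.\ $\sigma(V^n)$-measurable. Your claim that ``$\hat\alpha^n$ depends on $W^0$ through $X$'' misreads the product construction: $X$ lives on $\Omega^1$, while $W^0$ lives on $\Omega^0$.

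Because of this, the ``most delicate point'' you identify evaporates: with $\widetilde{\fA}^n=\fA^n$ the residue is identically zero. Had the issue been genuine, your proposed fix would not close the gap as written---martingale convergence along $\sigma(V^n)\uparrow\sigma(W^0)$ applies to a fixed integrand, not to a sequence $\fA^n$ that itself varies with $n$, and the $L^2$ bound \eqref{eq:compact_measures} controls densities on $\Omega^1$, not the approximation error for a $\prob(\cM([0,T]\times A))$-valued random variable on $\Omega^0$.
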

\begin{proof} We only prove the first statement, the other one is analogous. In order to do so we follow closely the proof of \cite[(Vol II) Lemma 3.28]{bookMFG}. To prove the thesis we can equivalently show that for every bounded and uniformly continuous functions $h^0:\cC([0,T];\R^{d})\times\prob(\cC([0,T];\R^{2d}))\times \prob(\cM([0,T]\times A)) \to \R$ and $h^1:\cC([0,T];\R^{d})\times \cC([0,T];\R^{d})\to \R$, we have
	\[
	\E^{\infty}[h^0(W^0,\env^\infty)h^1(X,W)]=\E^\infty\left[h^0(W^0,\env^\infty)\int_{\cC([0,T];\R^{2d})}h^1(x,w)d\fM^{\infty}(x,w)\right].
	\]
	By construction $\fM^n$ is the conditional distribution of $(X,W)$ given $V^n$, so that, the equality $l_n=r_n$ with $l_n:=\E^n[h^0(V^n,\env^n)h^1(X,W)]$ and $r_n:=\E^n[h^0(V^n,\env^n)\int_{\cC([0,T];\R^{2d})}h^1(x,w)d\fM^{n}(x,w)]$ holds for any $n\in\N$; we want to pass to the limit.
	Recall that $W^0$ is a Brownian motion under every $\P^n$, thus, $\lim_{n\to\infty}\P^n(B^C_n)=0$ where $B_n:=\{\sup_{0\le t\le T} |W^0_t|\le 4^n-1\}$.
	Using \eqref{eq:discret_points} on the set $B_n$, we deduce
	\[\lim_{n\to\infty}\P^n\left(\sup_{0\le t\le T}|V^n_t-W^0_t|\le \frac{1}{2^{n}}+\sup_{0\le s\le t\le T, |t-s|\le 2^{-n}}|W^0_s-W^0_t| \right)=1.
	\]
	From this fact, the weak convergence of $\P^n\circ(X,W,W^0,\env^n)^{-1}$ and the fact that $h^0$ is bounded and uniformly continuous, we deduce that
	 $l_n$ converges to $\E^{\infty}[h^0(W^0,\env^\infty)h^1(X,W)]$ as $n\to\infty$. In a similar fashion, $r_n$ converges to $\E^\infty[h^0(W^0,\env^\infty)\int_{\cC([0,T];\R^{2d})}h^1(x,w)d\fM^{\infty}(x,w)]$ as $n\to\infty$. The thesis follows from $l_n=r_n$ for every $n\in\N$.
\end{proof}
For any $n\in\N\cup\{\infty\}$, let $\mu^n=(\mu^n_t)_{t\in[0,T]}$ be the process of sub-probabilities induced by $\fM^n$ as in \eqref{eq:induced_flow}.
\begin{lemma} \label{lem:weakconv_subp}
Let $\P^\infty\circ(X,W,W^0,\fM^\infty)^{-1}$ be a weak limit of $\P^n\circ(X,W,W^0,\fM^n)^{-1}$. For any bounded Lipschitz continuous function $g:\R^d\to\R$, the process $(\langle g,\mu^n_t\rangle)_{t\in[0,T]}$ converges weakly to $(\langle g,\mu^\infty_t\rangle)_{t\in[0,T]}$.
	
\end{lemma}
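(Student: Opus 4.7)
The idea is to express the mapping $\fM\mapsto\langle g,\mu^\fM_\cdot\rangle$ as a single functional $\Lambda:\prob(\cC([0,T];\R^{2d}))\to \cC([0,T];\R)$ defined by $\Lambda(\fM)(t):=\int g(x_t)1_{\{\tau(x)>t\}}\,d\fM^X(x)$, show that $\Lambda$ is continuous at $\fM^\infty$ $\P^\infty$-a.s., and then invoke the continuous mapping theorem on the weak convergence $\P^n\circ(\fM^n)^{-1}\to \P^\infty\circ(\fM^\infty)^{-1}$ supplied by Lemma \ref{lem:tight} and the passage to a convergent subsequence. This avoids a separate tightness argument for the derived processes, since tightness is inherited from $\{\fM^n\}$.

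\emph{Diffuseness of the exit time.} The first step is to show that for every $t\in[0,T]$, $\fM^{\infty,X}(\{x:\tau(x)=t\})=0$ $\P^\infty$-a.s. By Lemma \ref{lem:limit_cond}, $\fM^\infty$ is a version of the conditional law of $(X,W)$ given $(W^0,\env^\infty)$, so it is enough to check that $\P^\infty(\tauex=t)=0$. Under $\P^\infty$ the process $X$ solves an SDE with bounded drift and full-rank diffusion matrices $(\sigma,\sigma^0)$ by \ref{hp:sigmaconst}, so $X_t$ admits a density with respect to Lebesgue measure; combined with \ref{hp:boundary} this gives $\P^\infty(X_t\in\partial\cO)=0$, and continuity of paths yields the inclusion $\{\tauex=t\}\subseteq\{X_t\in\partial\cO\}$.

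\emph{Continuity of $\Lambda$.} Let $\fM^k\to\fM$ be a deterministic weakly convergent sequence with $\fM^X(\tau=t)=0$ for all $t$. Pointwise convergence $\Lambda(\fM^k)(t)\to\Lambda(\fM)(t)$ follows from the Portmanteau theorem: the integrand $x\mapsto g(x_t)1_{\{\tau(x)>t\}}$ is bounded and its discontinuity set $\{\tau=t\}$ is $\fM^X$-null. To upgrade to uniform convergence, decompose
\[|\Lambda(\fM^k)(t)-\Lambda(\fM^k)(s)|\le \int|g(x_t)-g(x_s)|\,d\fM^{k,X}+\|g\|_\infty\bigl(F_k(t)-F_k(s)\bigr),\]
where $F_k(t):=\fM^{k,X}(\tau\le t)$. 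Tightness of $\{\fM^k\}$ and the Lipschitz property of $g$ make the first term equicontinuous in $(s,t)$, uniformly in $k$: on a tight compact $K\subset\cC$ with $\fM^{k,X}(K^c)\le\eta$, paths share a common modulus of continuity. For the second term, each $F_k$ is non-decreasing, and $F_k\to F_\infty$ pointwise by Portmanteau (the set $\{\tau\le t\}$ is closed in $\cC$ because $\tau$ is lower semicontinuous on $\cC$), while $F_\infty$ is continuous by the diffuseness of $\fM$; the classical fact that monotone pointwise convergence to a continuous limit is uniform on compacts yields equicontinuity of $\{F_k\}$. Pointwise convergence plus equicontinuity then give $\Lambda(\fM^k)\to\Lambda(\fM)$ in $\cC([0,T];\R)$.

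\emph{Conclusion.} Since $\Lambda$ is continuous at $\fM^\infty$ $\P^\infty$-a.s., the continuous mapping theorem applied to $\P^n\circ(\fM^n)^{-1}\to \P^\infty\circ(\fM^\infty)^{-1}$ yields $\P^n\circ\Lambda(\fM^n)^{-1}\to \P^\infty\circ\Lambda(\fM^\infty)^{-1}$ weakly in $\prob(\cC([0,T];\R))$, which is exactly the stated weak convergence of processes. I expect the delicate step to be the uniform convergence argument of $\Lambda$: the equicontinuity of the family $\{F_k\}$ ultimately rests on the continuity of the limit exit-time distribution $F_\infty$, which in turn crucially exploits both the non-degeneracy provided by \ref{hp:sigmaconst} and the regularity \ref{hp:boundary} of $\partial\cO$.
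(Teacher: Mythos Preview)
Your overall strategy---packaging $\fM\mapsto\langle g,\mu^{\fM}_\cdot\rangle$ as a single functional $\Lambda$ and invoking the continuous mapping theorem---is a legitimate and arguably cleaner alternative to the paper's route, which proceeds in two steps: tightness of the processes $\ell^n=\langle g,\mu^n_\cdot\rangle$ via Ascoli--Arzel\`a (exploiting that each $\fM^n(\omega^0)$ has an $L^2(\P^1)$-bounded density), and then convergence of finite-dimensional distributions. Your equicontinuity argument, based on tightness of a deterministic sequence $\{\fM^k\}$ together with a P\'olya-type uniform convergence of the monotone exit-time distributions $F_k$ to the continuous $F_\infty$, is genuinely different from the paper's density-based bound and would apply to any weakly convergent sequence with diffuse limit.

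There is, however, a real gap in your diffuseness step. You write that ``under $\P^\infty$ the process $X$ solves an SDE with bounded drift and full-rank diffusion matrices'', and deduce that $X_t$ has a Lebesgue density. But at this point the limiting SDE for $X$ under $\P^\infty$ has \emph{not} been established: its identification (equation~\eqref{eq:limiting_SDE_eq} in the proof of Theorem~\ref{thm:existence}) requires precisely the convergence $\langle h,\mu^n_\cdot\rangle\to\langle h,\mu^\infty_\cdot\rangle$ that Lemma~\ref{lem:weakconv_subp} provides. Your argument is therefore circular as written. The paper avoids this by proving directly that $\P^\infty\circ X^{-1}\ll\P^1$: the Girsanov densities $d\P^n/d\P$ are uniformly bounded in $L^2$, the stochastic exponentials $\sexp(U^n)$ are tight, and \cite[Theorem~X.3.3]{bookJacod} transfers absolute continuity to the weak limit; diffuseness of $\tauex$ then follows from $\P^1(\tauex=t)=0$. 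If you replace your SDE claim by this argument, the rest of your proof goes through---modulo the minor technicality that $\Lambda$ does not land in $\cC([0,T];\R)$ for arbitrary $\fM$ (only for $\fM$ with diffuse exit-time law), so the continuous mapping theorem should be applied with $\Lambda$ viewed as a map into $D([0,T];\R)$, or after restricting to the almost-sure range of $\fM^n$ and $\fM^\infty$.
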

\begin{proof}
	For any $n\in\N$, shortly denote with $\ell^n:\Omega^0\to \cC([0,T];\R)$ the random variable $\ell^n:=\langle g,\mu^n\rangle$.
	To prove that $\ell^n$ converges weakly to $\langle g,\mu^\infty\rangle$ we show: 
	\begin{itemize}
		\item $\P^n\circ(\ell^n)^{-1}$ is tight;
		\item for any $\kappa \in \N$, for any $t_1,\ldots,t_\kappa$, the vector $(\ell^n_{t_1},\cdots,\ell^n_{t_k})$ weakly converges to $(\langle g,\mu_{t_1}^\infty\rangle,\cdots,\langle g,\mu_{t_k}^\infty\rangle)$.
	\end{itemize}
	The desired result follows by \cite[VI 3.20]{bookJacod}.\smallskip
	
	\textit{Tightness}. We show that $\ell^n$ takes values in a relatively compact subset of $\cC([0,T];\R)$.
	First, the assumption that $g$ is bounded implies that $\ell^n(\omega^0)$ is uniformly bounded in $n$ and $\omega^0$.
	Second, we show uniform equicontinuity. Take an arbitrary $n$ and $\omega^0$. By construction, there exists $\hat{\fm}^n_k\ll\P^1$ such that $\fM^n(\omega^0)=\hat{\fm}^n_k$. Moreover, recall that \eqref{eq:squarebound} yields $\frac{d\hat{\fm}_k}{d\P^1}=\hat{Z}^n_k$ with $\E^{\P^1}[(\hat{Z}^n_k)^2]\le M$ for some $M>0$ depending only on the coefficients of the problem. For any $0\le s\le t\le T$,
	\begin{eqnarray*}
		|\ell^n(t,\omega^0)-\ell^n(s,\omega^0)|&\le&\int_{\cC([0,T];\R^{d})}|g(x_t)1_{\{\tau(x)>t\}}-g(x_s)1_{\{\tau(x)>s\}}|\hat{Z}^n_k(x) \P^1(dx)\\
		&\le& \left(\E^{\P^1}\left[|g(X_t)1_{\{\tauex>t\}}-g(X_s)1_{\{\tauex>s\}}|^2\right]\right)^{\frac{1}{2}}\left(\E^{\P^1}[(\hat{Z}^n_k)^2]\right)^{\frac{1}{2}}.
	\end{eqnarray*}
	The last term on the r.h.s.\ is uniformly bounded by $\sqrt{M}$ so we will focus on the other one.
	As $g$ is bounded by some constant $c>0$, it holds
	\[
	\int_{\{s<\tau(x)\le t\}} |g(x_s)|\P^1(dx)\le c\P^1(\{s<\tau(x)\le t\})=c(F_{\tauex}(t)-F_{\tauex}(s)),
	\]
	where $F_{\tauex}$ is the distribution function of $\tauex$ under $\P^1$. The latter is continuous since $\P^1(\{\tau(x)=u\})=0$ for every $u\in[0,T]$ (see, e.g., \cite[Lemma A.4]{CGL19}). In particular, $F_{\tauex}$ is uniformly continuous on $[0,T]$ so that $F_{\tauex}(t)-F_{\tauex}(s)$ can be made arbitrarily small provided that $|t-s|$ is sufficiently small. Second, from the Lipschitz continuity of $g$ we deduce that there exists a constant $c>0$ such that
	\[
	\int_{\{\tau(x)> t\}} |g(x_t)-g(x_s)|^2\P^1(dx)\le c \E^{\P^1}[|X_t-X_s|^2]= c (t-s)\left(\sum_{i,j=1}^d|\sigma_{i,j}|^2+|\sigma^0_{i,j}|^2\right),
	\]
	where the bound is again uniform. Putting together the two inequalities we obtain the uniform equicontinuity of $\ell^n(\omega^0)$.
	By the Ascoli-Arzel\`a Theorem we conclude that $\{\ell^n\}_{n\in\N}$ takes values in a relatively compact set. \smallskip
	
	\textit{Convergence of the finite dimensional distributions.}	
	Towards this goal we first show that $\P^\infty\circ X^{-1}\ll\P^1$ using an argument similar to \cite[Proposition 3.1]{CGL19}.
	As in \eqref{eq:squarebound}, the sequence $d\P^{\mu^n}/d\P$ can be shown to be uniformly $L^2$-bounded by a constant $M$. Thus, for any sequence of $B_n\in\cF^1$ such that $\P^1(B_n)\to 0$ we have  
	\[\P^n\circ X^{-1}(B_n)=\E\left[\frac{d\P^{\mu^n}}{d\P}1_{B_n}\right]\le M\P^1(B_n)\to 0.
	\]
	Let $U^n _t=\int_0^t\sigma^{-1}\tilde{b}(s,X_s,\mu,\hat \alpha^n _s)dW_s$, $t\in [0,T]$. Next, we prove that the sequence of martingales $\sexp(U^n)=(\sexp(U^n)_t)_{t\in[0,T]}$ such that $d\P^{\mu^n}/d\P=\sexp(U^n)_T$ is tight. Using \cite[Theorem VI.4.13]{bookJacod}, it is enough to check the tightness of the sequence of their quadratic variations. There exists a constant $C>0$, depending only on the coefficients of the problem, such that for any couple of stopping times $\sigma\le \tau$ with $\tau-\sigma\le \delta$, it holds
	\begin{eqnarray*}
	\P(|\langle \sexp(U^n)\rangle_\tau-\langle \sexp(U^n)\rangle_\sigma |\ge \varepsilon)&\le&\varepsilon^{-1}\E\left[|\langle \sexp(U^n)\rangle_\tau-\langle \sexp(U^n)\rangle_\sigma |\right]\\
	&\le&\varepsilon^{-1}\E\left[\int_{\sigma}^{\tau}|\sigma^{-1}\tilde{b}(s,X_s,\mu^n,\hat{\alpha}^n_s)\sexp(U^n)_s|^2\right]\\
	&\le& C\delta,
	\end{eqnarray*}
 where we used in the last inequality \ref{hp:cont_bdd} and the fact that $\sexp(U^n)_s$ is also uniformly $L^2$-bounded. By Aldous' criterion, $\langle \sexp(U^n)\rangle$ is tight.
 Using \cite[Theorem X.3.3]{bookJacod}, these two properties imply $\P^\infty\circ X^{-1}\ll\P^1$. 
 
 The absolute continuity property implies that, for any $t\in[0,T]$, the set $\{\tauex=t\}$ has probability zero under $\P^\infty$. 
Since, by Lemma \ref{lem:limit_cond}, $\fM^\infty$ is a version of the conditional distribution of $(X,W^\infty)$ given $(W^0,\fM^\infty)$, it holds
 \[0=\P^{\infty}(\{\tauex=t\})=\int_{\Omega^0}\fM^\infty(\omega^0,\{\tauex=t\})\P^0(d\omega^0).
 \]
 We deduce that $\fM^\infty(\{\tauex=t_1\}\cup\cdots\cup \{\tauex=t_\kappa\})=0$ $\P^0$-a.s. On the space of $\Q\in\prob(\cX)$ satisfying $\Q(\{\tauex=t_j\})=0$ for every $j=1,\ldots,\kappa$, the map
 \[\iota:\Q\mapsto\left(\int_\cX g(x_{t_1}) 1_{\{\tau(x)>t_1\}}\Q(dx),\ldots, \int_\cX g(x_{t_\kappa}) 1_{\{\tau(x)>t_\kappa\}}\Q(dx)\right)
 \]
 is continuous with respect to the weak topology by the continuous mapping theorem. Since $\fM^n \to \fM^\infty$ in distribution by Lemma \ref{lem:limit_cond}, the continuity of $\iota$ ensures that also $\iota(\fM^n)\to\iota(\fM^\infty)$ in distribution. As $\iota(\fM^n)=\langle g,\mu^n\rangle$ for any $n\in\N\cup\{\infty\}$ the desired convergence is proved. 
\end{proof}

\subsection{Proof of Theorem \ref{thm:existence}}
We have now all the necessary ingredients to prove the main existence result. 
In order to perform the limiting argument, we consider first the relaxed version of the control problem, namely, where the control policies are random variables with values in the compact set 
\[
\Gamma:=\left\{q\in\cM([0,T]\times A)\ :\ q(dt,da)=dt\ q_t(da),\text{ for some Borel measurable kernel }q_t\right\}.
\]
For any $n\in\N$, recall that $\hat{\alpha}^n$ denotes the optimal control of the approximated problem.
We rewrite \eqref{eq:dynamicsn} with $\hat{\gamma}^n_s(da)=\delta_{\hat{\alpha}^n_t}(da)$ as
\[
X_t=X_0 +\int_0^t\int_A b(s,X_s,\langle h,\mu^n_s\rangle,a)\hat{\gamma}^n_s(da)ds+\sigma W^\alpha_t+\sigma^0 W^0_t +\int_0^t\eta(s)d\smL^n_s,
\] with $X_0\sim\xi$. Assumptions \ref{hp:affine} and \ref{hp:terminal} guarantee that the optimization over relaxed controls, namely, the maximization of
\begin{equation}\label{eq:MFG_cost_relax_n}
	J^n(\gamma)=\E\bigg[\int_0^{\tauex} \int_A f(t,X_t,\langle h,\mu^n_t\rangle,a)\gamma_t(da)dt+G(\tauex,\Xtauex)\bigg],
\end{equation}
over the class of $\F$-progressively measurable processes $(\gamma_t)_{t\in [0,T]}$ such that $\gamma_t (da)dt$ takes values in $\Gamma$, yields the same values as the original one (see, e.g.,  \cite[Section 4.1]{CDL16} and \cite[Theorem 4.11]{elkaroui1987}).
For any of the approximated problems the $W^0$-conditional distributions of $(X,W)$ and of the optimal control $\hat{\alpha}^n$ are measurable with respect to the filtration generated by the common noise $W^0$. However, this is no longer true for the limit $\env^\infty$. The component $\fA^\infty$ induces a relaxed control $\hat{\gamma}^\infty:=\int_{\Omega^0}\fA^\infty d\P^\infty$ (see also Lemma \ref{lem:limit_cond} and the discussion before) taking values in $\Gamma$. 

We first check the admissibility of the setup as in Definition \ref{def:admissiblesetup} and second we check the requirements of Definition \ref{def:liftedMFGeq}.

\textit{Admissibility of the setup.} We choose $\Omega^0=\cC([0,T];\R^{2d})$ and $\Omega^1=\cC([0,T];\R^{d})\times \prob(\cC([0,T];\R^{2d}))\times\prob(\cM([0,T]\times A))$ so that the product space $\Omega^0\times\Omega^1$ is the canonical space to support the weak limit  $\P^\infty\circ(X,W,W^0,\env^\infty)^{-1}$ from Lemma \ref{lem:limit_cond}. We choose as filtration $\F$ the one generated by $(X,W,W^0,\env^\infty)$, in the sense of Definition \ref{def:natural}.

We check the immersion property. Let $\Theta_1:=(W^0,\env^\infty)$, $\Theta_2:=(X,W,W^0,\env^\infty)$ and $\F^{\Theta_i}$ the filtration generated by $\Theta_i$, for $i=1,2$. As $\F^{\Theta_1}$ is automatically immersed in the natural filtration of $(X_0,W,W^0,\env^\infty)$, it is enough to check that $\F^{\Theta_1}$ is immersed in $\F^{\Theta_2}$. A sufficient condition for the immersion property is that $\cF^{\Theta_1}_T$ and $\cF^{\Theta_2}_t$ are conditionally independent given $\cF^{\Theta_1}_t$ for every $t\in[0,T]$ (see e.g. \cite[(Vol II) Lemma 1.7]{bookMFG}). 
Fix $t\in [0,T]$ and let $A\in\cF^{\Theta_1}_T$ and $B\in\cF^{\Theta_2}_t$. We need to show
\begin{equation} \P(A\cap B\mid\cF^{\Theta_1}_t)=\P(A\mid\cF^{\Theta_1}_t)\P(B\mid\cF^{\Theta_1}_t). \label{eq:cond-prob}
\end{equation}
Let 
$e_t(x,w,q)=(x_{\cdot\wedge t},w_{\cdot\wedge t},q_{|_{[0,t]\times A}})$ for $(x,w,q)\in \cC([0,T];\R^{2d})\times \Gamma$. With $\env^\infty_{\cdot\wedge t}$ we mean $\env^\infty\circ e_t^{-1}$.
We can take w.l.o.g. 
\[ A=\{W^0 \in A_1\} \times \{\env^\infty \in A_2\}, \quad B= \{X_{\cdot \wedge t} \in B_1\} \times \{W_{\cdot \wedge t} \in B_2\} \times \{W^0 _{\cdot \wedge t} \in B_3\} \times \{\env^\infty_{\cdot \wedge t} \in B_4\},\] 
for some $A_1,B_1,B_2,B_3$ and $A_2,B_4$ Borel sets of $\cC([0,T];\R^{d})$ and $\prob(\cC([0,T];\R^{2d}))$ respectively, so that
\[\P(A\cap B\mid\cF^{\Theta_1}_t)=\E[1_{A_1}(W^0)1_{A_2}(\env^\infty)1_{B_1}(X_{\cdot\wedge t})1_{B_2}(W_{\cdot\wedge t})1_{B_3}(W^0_{\cdot\wedge t})1_{B_4}(\env^\infty_{\cdot\wedge t})\mid\cF^{\Theta_1}_t].
\]
Since $(X,W)$ is defined on $\Omega^1$ and $(W^0,\env^{\infty})$ on $\Omega^0$ we can replace, in the above equation, $1_{B_1}(X_{\cdot\wedge t})1_{B_2}(W_{\cdot\wedge t})$ with its conditional expectation with respect to $\cF^{\Theta_1}_t$. Since the latter is equal to  $\fM^\infty_{\cdot\wedge t}(B_1\times B_2)$ by construction of $\env^\infty$, we can use the measurability property of $\fM^\infty_{\cdot\wedge t}$ to deduce
\begin{eqnarray*}
	\P(A\cap B\mid\cF^{\Theta_1}_t)&=&\fM^\infty_{\cdot\wedge t}(B_1\times B_2)1_{B_3}(W^0_{\cdot\wedge t})1_{B_4}(\env^\infty_{\cdot\wedge t})\E[1_{A_1}(W^0)1_{A_2}(\env^\infty)\mid\cF^{\Theta_1}_t]\\
	&=&\P(B\mid\cF^{\Theta_1}_t)\P(A\mid\cF^{\Theta_1}_t),
\end{eqnarray*}
as desired.

\textit{The limiting $\env^\infty$ induces a relaxed weak MFG equilibrium.} 
Consider the relaxed control $\hat{\gamma}^\infty$, induced by $\env^\infty$, which is the weak limit of the sequence of relaxed controls $\hat{\gamma}^n$ that are optimal for the approximated problems. Using the assumptions \ref{hp:cont_bdd} and \ref{hp:affine}, we observe that the function
\[
(x,m,\gamma)\mapsto \int_0^\cdot \int_A \tilde{b}(s,x_s,m,a)\gamma_s(da)ds,
\]
with $\tilde{b}$ as in \eqref{eq:btilde}, is continuous and bounded from $\cC([0,T];\R^d)\times \cC([0,T];\R)\times \Gamma $ to $\cC([0,T];\R^d)$. 
 Lemma \ref{lem:weakconv_subp} and the fact that $\hat{\gamma}^\infty$ is the weak limit of $\hat{\gamma}^n$ guarantee that $X$ satisfies 
\begin{equation}\label{eq:limiting_SDE_eq}
X_t=X_0+\int_0^t\int_A b(s,X_s,\langle h,\mu^\infty_s\rangle,a)\hat{\gamma}^\infty_s(da)ds+\sigma W_t+\sigma^0 W^0_t +\int_0^t\eta(s)d\smL^\infty_s,
\end{equation}
 on $(\Omega,\cF^\infty,\F^\infty,\P^\infty)$ with $X_0\sim\xi$. We next show that $J(\hat{\gamma}^\infty)=\lim_{n\to\infty}J^n(\hat{\gamma}^n)$, with $J^n$ as in \eqref{eq:MFG_cost_relax_n}. Under $\P^{\infty}$, $X$ has a bounded drift thanks to  \eqref{eq:limiting_SDE_eq} and \ref{hp:cont_bdd}. It follows, see, e.g., \cite[Lemma D.3]{CF18}, that the functions $x\mapsto \tau(x)$ and $x\mapsto1_{[0,\tau(x))}$, with $\tau(x)$ as in Definition \ref{def:MFGequilibrium_weak}, are $\P^{\infty}$-a.s.\ continuous and bounded. The weak convergence of $\hat{\gamma}^n$ to $\hat{\gamma}^\infty$ and that of $\langle h,\mu^n_t\rangle$ to $\langle h,\mu^\infty_t\rangle$ from Lemma \ref{lem:weakconv_subp} yield $J(\hat{\gamma}^\infty)=\lim_{n\to\infty}J^n(\hat{\gamma}^n)$.
 Note that exactly the same argument implies that $J(\gamma)=\lim_{n\to\infty}J^n(\gamma)$ where $\gamma=(\gamma_t)_{t\in [0,T]}$ is an arbitrary relaxed control such that $\gamma_t (da)dt$ takes values in $\Gamma$ (i.e.\ when only $\mu^n$ varies with $n$ and $\gamma$ is fixed).
 By the optimality of $\hat{\gamma}^n$ for the approximated problem, we have $J^n(\hat{\gamma}^n)\ge J^n(\gamma)$ and, by passing to the limits, we obtain $J(\hat{\gamma}^\infty)\ge J(\gamma)$. As $\gamma$ was arbitrary, this proves that $\hat{\gamma}^\infty$ is optimal.
 
Finally, by Lemma \ref{lem:limit_cond},
\[\P^{\infty}\big(\omega^0\in\Omega^0\ :\ \fM^{\infty}(\omega^0) =\law(X(\omega^0,\cdot),W(\cdot)),\ \fA^{\infty}(\omega^0)=\law(\hat{\gamma}^\infty(\omega^0,\cdot)\big)=1
\]
so that \eqref{eq:equilibrium} in Definition \ref{def:liftedMFGeq} is satisfied (with relaxed controls).
We conclude by noting that assumptions \ref{hp:affine} and \ref{hp:terminal} guarantee that the optimization over relaxed controls yields the same values as the original one without affecting admissibility and the fixed point condition (see, e.g., \cite[Section 4.1]{CDL16} and \cite[Theorem 4.11]{elkaroui1987}).

\section{Approximate Nash Equilibria}\label{sec:approx-nash}

In this section we show how solving the mean field limit problem gives rise to approximate Nash equilibria (see Definition \ref{def:eps_Nash}) for the $N$-player game \eqref{eq:model2}-\eqref{eq:model2_cost}. 
We briefly describe here the procedure we intend to undertake to obtain existence of $\varepsilon$-Nash equilibria for games with sufficiently many players. In our analysis of the limiting mean field problem \eqref{eq:MFG_state}-\eqref{eq:MFG_cost}, we introduced a sequence of intermediary problems where the conditional distribution of $(X,W)$ is a random variable taking only finitely many values; see Section \ref{sec:fixedpoint}. The equilibrium induced by $\env^n$ yields an optimal control for the optimization problem that we denoted by $\hat{\gamma}^n$, in its relaxed form. Our first next step is to deepen this analysis and to establish the existence of an optimal control depending on the state $X$ and the Brownian motion $W$ only, providing the same value of $\hat{\gamma}^n$. We next use this control to construct $\varepsilon$-Nash equilibria for an auxiliary $N$-player game which can be seen as the pre-limit of the intermediate mean field problem. Finally, we need to show that for $n$ large enough such a control induces an approximate Nash equilibria in the $N$-player game \eqref{eq:model2}-\eqref{eq:model2_cost}.\medskip

Consider again the setup of Section \ref{sec:fixedpoint} and the equilibrium $\P^n\circ(X,W,W^0,\env^n)^{-1}$ constructed at the beginning of Section \ref{sec:tight} (see also Notation \ref{notation:Pmu}) and $\mu^n=(\mu^n_t)_{t\in[0,T]}$ the induced process of sub-probabilities as in \eqref{eq:induced_flow}. We denoted $\hat{\gamma}^n_s(da) = \delta_{\hat{\alpha}^n_s}(da)$ the (relaxed) optimal control for the dynamics
\begin{equation}\label{eq:controlledSDErelax}
	X_t=X_0+\int_0^t\int_A b(s,X_s,\langle h,\mu^n_s\rangle,a)\hat{\gamma}^n_s(da)ds+\sigma W_t+\sigma^0 W^0_t +\int_0^t\eta(s)d\smL^n_s.
\end{equation}
with $X_0 \sim \xi$ and value function $J^n(\hat{\gamma}^n)$ as in \eqref{eq:MFG_cost_relax_n}.
\begin{proposition}\label{prop:feedback}
	Let $n\in\N$. Under Assumption \ref{ass:alltogether},  there exists $g^n:[0,T]\times \Omega^1\to \prob(A)$ such that the relaxed control $\gamma^n_{s,X,W}(da):=g^n(s,X,W)(da)$ satisfies $J^n(\gamma^n_{\cdot, X,W})=J^n(\hat{\gamma}^n)$.
\end{proposition}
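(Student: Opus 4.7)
The plan is to leverage the discrete structure of $\env^n$: the flow $\mu^n$ takes finitely many deterministic values across the cells $A^n_1,\ldots,A^n_{|\cV^n|}$ of the algebra $\cV^n$, so the problem decouples cellwise into standard (non-McKean--Vlasov) stochastic control problems, for each of which a Markov feedback representation is available.

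First I would fix a cell $A^n_k$ and work conditionally on it. On $A^n_k$ the flow $\mu^n$ equals the deterministic flow $\mu^{n,k}$, hence $\ell^{n,k}_t := \langle h,\mu^{n,k}_t\rangle$ and $\smL^{n,k}_t$ are deterministic functions of $t$, and maximizing $J^n$ subject to \eqref{eq:controlledSDErelax} becomes a classical stochastic control problem with Markovian data. Proposition \ref{prop:exFBSDE} provides the optimizer $\hat\alpha^n\big|_{A^n_k}(s) = \hat a(s,X_s,\mu^{n,k},\hat Z^{n,k}_s)$ via the BSDE with driver $\hat H(s,X_s,\mu^{n,k},z)$ and terminal datum $G(\tauex,X_\tauex)$.

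Second, I would derive a Markov feedback representation cellwise. Since the BSDE driver and the boundary datum depend on $\omega^0$ only through the deterministic parameters of the cell, and $X$ is Markov on $A^n_k$ once a measurable feedback is substituted, a nonlinear Feynman--Kac / Markovian BSDE argument (adapted to the bounded random horizon $\tauex$ as in the Appendix) yields Borel maps $u^{n,k}$, $v^{n,k}$ such that $\hat Y^{n,k}_t = u^{n,k}(t,X_t)$ and $\hat Z^{n,k}_t = v^{n,k}(t,X_t)$ on $\{t\le\tauex\}$. Composing with the continuous selector $\hat a$ from \eqref{eq:existence max} delivers the $(t,X_t)$-feedback $\bar a^{n,k}(t,x) := \hat a(t,x,\mu^{n,k},v^{n,k}(t,x))$ such that $\hat\alpha^n\big|_{A^n_k}(s) = \bar a^{n,k}(s,X_s)$ almost surely.

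Finally, I would assemble the cellwise feedbacks into a single map $g^n(s,x,w)$ free of $\omega^0$. Pathwise uniqueness of the closed-loop SDE on $A^n_k$ (thanks to bounded drift and full rank of $\sigma,\sigma^0$) permits inverting for the common noise,
\[
\sigma^0 W^0_t = X_t - \xi - \sigma W_t - \int_0^t b(s,X_s,\ell^{n,k}_s,\bar a^{n,k}(s,X_s))ds - \int_0^t \eta(s)d\smL^{n,k}_s \quad \text{on } A^n_k,
\]
so that $W^0$ -- and hence $V^n$ and the progressively revealed components of the cell index $k$ at time $s\in[t_i,t_{i+1})$ -- is a measurable functional of $(\xi,X_{\cdot\wedge s},W_{\cdot\wedge s})$ up to $\P^n$-null sets. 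Patching across cells by a measurable selection one sets
\[
g^n(s,x,w)(da) := \delta_{\bar a^{n,k(x_0,x_{\cdot\wedge s},w_{\cdot\wedge s})}(s,x_s)}(da),
\]
which by construction satisfies $g^n(s,X,W) = \delta_{\hat\alpha^n_s}$ $\P^n$-a.s., whence $J^n(\gamma^n_{\cdot,X,W}) = J^n(\hat\gamma^n)$. The main obstacle I foresee is the Markov representation step: the random absorbing horizon $\tauex$ rules out the standard Feynman--Kac setting, so one must invoke BSDE theory with random horizon tied to a Markov hitting time; alternatively, one can bypass the inversion of step three by projecting $\hat\alpha^n$ onto the $\F^{X,W}$-filtration and applying a Gy\"ongy--Brunick--Shreve mimicking argument, exploiting the affinity \ref{hp:affine} of $b$ in $a$ so that the projected control preserves the law of $X_{\cdot\wedge\tauex}$ and hence the value $J^n$.
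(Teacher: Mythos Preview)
Your main route has two structural gaps.

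\emph{Step 2 (cellwise Markovian BSDE).} The cell $A^n_k$ is an atom of the algebra generated by $V^n_{t_0},\ldots,V^n_{t_{2^n-1}}$ and hence lies in $\cF^0_T$, not in $\cF^0_0$. Restricting to $A^n_k$ is therefore conditioning on terminal common-noise information, which destroys the Brownian property of $W^0$ and with it the Markov structure you need for a Feynman--Kac representation $\hat Z^{n,k}_t=v^{n,k}(t,X_t)$. If instead you solve, for each $k$, a separate BSDE on the \emph{unconditioned} space with the deterministic flow $\mu^{n,k}$, the resulting Markov $Z^k$'s cannot be glued cellwise into the true $\hat Z$: the paste $\sum_k Z^k\,1_{A^n_k}$ fails to be predictable precisely because $A^n_k\in\cF^0_T$, so it is not a candidate BSDE solution and there is no reason for $\hat Z$ to agree with $Z^k$ on $A^n_k$. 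The random horizon $\tauex$ that you single out is a secondary difficulty; the cell-conditioning is the binding one.

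\emph{Step 3 (inversion).} Your formula $\sigma^0 W^0_t=X_t-\xi-\sigma W_t-\int_0^t b(s,X_s,\ell^{n,k}_s,\bar a^{n,k}(s,X_s))ds-\int_0^t\eta(s)d\smL^{n,k}_s$ uses the closed-loop drift, which depends on the \emph{full} cell index $k$ through $\ell^{n,k}$, $\smL^{n,k}$ and $\bar a^{n,k}$; but $k$ is precisely what you are trying to recover from $(X,W)$. Saying that only the ``progressively revealed components'' of $k$ are needed does not resolve this, because the feedback $\bar a^{n,k}$ you constructed in Step~2 is indexed by the full cell. The non-circular inversion is the reference-measure one: $\sigma^0 W^0_t=X_t-X_0-\sigma W_t$ holds pathwise on the canonical space of Section~\ref{sec:fixedpoint} before any Girsanov transformation, and requires no knowledge of the cell.

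The alternative you sketch in your last sentence---project $\hat\alpha^n$ onto the filtration of $(X,W)$ and invoke a Brunick--Shreve mimicking result---is exactly the paper's proof. The paper defines $g^n$ as a disintegration kernel so that $\gamma^n_{t,X,W}=\E^n[\hat\gamma^n_t\mid\cF^{(X,W)}_t]$, applies the mimicking theorem to obtain $(\tilde X,\tilde W)\overset{d}{=}(X,W)$, and then uses the pathwise identity above to conclude that $W^0$, hence $\mu^n$ and $\tauex$, are adapted to the natural filtration of $(X,W)$; the tower property yields $J^n(\gamma^n_{\cdot,X,W})=J^n(\hat\gamma^n)$. Note that the resulting $g^n(s,x,w)$ is \emph{path}-dependent; a Markov feedback in $(s,x_s)$ alone is neither obtained nor required by the statement.
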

\begin{proof}
	We proceed by means of a suitable mimicking theorem in the spirit of \cite{CGL19,Lack15}. A crucial difference here is that we need to condition on both $(X,W)$ which is in line with the construction of the equilibrium conditional distribution $\env^n$ of Section \ref{sec:fixedpoint}. 
	Recall Notation \ref{notation:Pmu} for the measure $\P^n$  and the corresponding expectation $\E^n$.
	
	Consider the probability measure $p^n\in\prob([0,T]\times \Omega^1\times A)$ defined by
	\[p^n(B) :=\frac{1}{T}\E^n\left[\int_{[0,T]\times A}1_B(t,X,W,a)\hat{\gamma}^n_t(da)dt\right].
	\]
	We denote by $g^n$ the disintegration kernel of $p^n$ so that $p^n(dt,d\varphi,da)=g^n(t,\varphi)(da)\mathfrak{p}^n(dt,d\varphi)$ for some $\mathfrak{p}^n\in\prob([0,T]\times\Omega_1)$. We shortly denote $\hat{X}:=(X,W)$ and $\gamma^n_{t,\hat{X}}=g^n(t,\hat{X})$.
	We easily prove that $\gamma^n_{t,\hat{X}}=\E^n[\hat{\gamma}^n_t\mid\cF_t^{\hat{X}}]$ $\lambda\otimes \P^n$-a.e., where $(\cF_t^{\hat{X}})_{t\in[0,T]}$ is the natural filtration of $\hat{X}$ and $\lambda$ the Lebesgue measure on $[0,T]$. Indeed, by taking arbitrary $A'\subset A$ measurable and $B_t\in\cF_t^{\hat{X}}$ for each $t$, we have
	\begin{align*}
		\frac{1}{T}\int_0^T\E^n\left[1_{B_t}(\hat{X})\gamma^n_{t,\hat{X}}(A')\right]dt &=\int_{[0,T]\times\Omega^1}d\mathfrak{p}^n\int_{A'} 1_{B_t}(\varphi)g^n(t,\varphi)(da)\\
		&=\frac{1}{T}\E^n\left[\int_{[0,T]\times A'}1_{B_t}(\hat{X})\hat{\gamma}^n_t(da)dt\right]\\
		&=\frac{1}{T}\int_0^T\E^n\left[1_{B_t}(\hat{X})\hat{\gamma}^n_t(A')\right]dt.
	\end{align*}
	We now make use of the mimicking theorem of \cite[Corollary 3.11]{BruShre} to obtain a weak solution of the controlled SDE: 
	\begin{equation*}
		\tilde{X}_t=\tilde{X}_0+\int_0^t\int_A b(s,\tilde{X}_s,\langle h,\tilde{\mu}^n_s\rangle,a)\gamma^n_{s,\tilde{X},\tilde{W}}(da)ds+\sigma \tilde{W}_t+\sigma^0 \tilde{W}^0_t +\int_0^t\eta(s)d\tilde{\smL}^n_s,
	\end{equation*}
	with $\tilde X_0 \sim \xi$ and with the remarkable property that the joint law of $(\tilde{X},\tilde{W})$ coincide with that of $(X,W)$ in \eqref{eq:controlledSDErelax}. 
	A crucial aspect is now the construction of $W^0$ at the beginning of Section \ref{sec:fixedpoint} which reveals that $W^0$ is adapted to the natural filtration of $(X,W)$.
	Recall now that $\mu^n_s$ is obtained via \eqref{eq:induced_flow} using $\fM^n$ from \eqref{eq:fMn}. Since $\fM^n$ takes finitely many values depending only on the realization of $W^0$, we deduce that $\mu^n_s$ is $\cF_s^{\hat{X}}$-measurable for every $s$. Note now that $\tilde{\fM}^n$ induced by the control $\gamma^n_{t,\tilde{X},\tilde{W}}$ and defined via \eqref{eq:fMn}-\eqref{eq:mnkfixpoint} is necessarily equal to $\fM^n$ as the joint laws of $(X,W)$ and $(\tilde{X},\tilde{W})$ coincide. In particular, $\tilde{\mu}^n_s$ induced by $\tilde{\fM}^n$  satisfies $\tilde{\mu}^n_s=\mu^n_s$ for every $s$. Moreover, since $X$ is the canonical process, $\tauex=\tau(X)$ where $\tau(x)=\inf\{t\in[0,T] : x_t\notin \cO\}$ as defined in Definition \ref{def:liftedMFGeq}.
	This allows us to obtain equality of the value functions as follows:
	\begin{align*}
		J^n(\hat{\gamma}^n)&=\E^n\bigg[\int_0^{\tauex} \int_A f(t,X_t,\langle h,\mu^n_t\rangle,a)\hat{\gamma}^n_t(da)dt+G(\tauex,\Xtauex)\bigg]\\
		&=\E^n\bigg[\int_0^{\tauex} \int_A f(t,X_t,\langle h,\mu^n_t\rangle,a)\E^n\big[\hat{\gamma}^n_t(da)\mid\cF_t^{\hat{X}}\big]dt+G(\tauex,\Xtauex)\bigg]\\
		&=\E^n\bigg[\int_0^{T} \int_A 1_{[0,\tau(X)]}(t)f(t,X_t,\langle h,\mu^n_t\rangle,a)\E^n\big[\hat{\gamma}^n_t(da)\mid\cF_t^{\hat{X}}\big]dt+G(\tau(X),X_{\tau(X)})\bigg]\\
		&=\E^n\bigg[\int_0^{\tauex} \int_A f(t,X_t,\langle h,\mu^n_t\rangle,a)\gamma^n_{t,\hat{X}}(da)dt+G(\tauex,\Xtauex)\bigg]\\
		&=J^n(\gamma^n).
	\end{align*}
	In the second equality we simply used the tower property and the measurability of $\mu^n_t$. In the third equality we simply wrote the explicit dependence of $\tauex$ on $X$ which reveals that the term inside the expectation is a measurable function of $(X,W)$. In the fourth equality we used
	$\gamma^n_{t,\hat{X}}=\E^n [\hat{\gamma}^n_t\mid\cF_t^{\hat{X}}]$ $\lambda\otimes \P^n$-a.e.\ and equality in distributions of $(X,W)$ and $(\tilde{X},\tilde{W})$.
\end{proof}

The second step is to establish some convergence results for the empirical measure of the state dynamics of the $N$ players, if they use the control from 
Proposition \ref{prop:feedback} and if the input flow of sub-probabilities is a random variable taking only finitely many values. Recall that, at the beginning of Section \ref{sec:tight}, we considered a discretization $V^n$ of a Brownian motion which generates a partition of finitely many events $A^n_1,\ldots,A^n_{|\cV^n|}$. 

We now introduce the following approximate $N$-player game, which is intended to be constructed in a weak sense as in the beginning of section \ref{sec:MFG}. Observe that Assumption \ref{hp:sigmaconst} and equation \eqref{eq:Xi_weak_formulation} imply that $(\xi^i, W^i, W^0)_{i=1}^N$ and $(X^i, W^i)_{i=1}^N$ generate the same natural filtration. Note that the initial conditions $\xi^i$ are included in the states $X^i$. Therefore any admissible (open loop) control can be viewed as a measurable function of $(\xi^i, W^i, W^0)_{i=1}^N$ or alternatively of $(X^i, W^i)_{i=1}^N$. The latter is more convenient for the sake of exposition. Let $g=(g^1,\ldots,g^N)$ be an admissible relaxed control, so it can be represented as $g^i = g^i(t, (X^i, W^i)_{i=1}^N)$ for some suitable measurable functions denoted $g^i$, $i=1,\ldots, N$, with a little abuse of notation. The corresponding state equation is
\begin{eqnarray}\label{eq:approxSDE}
	Y^i_t&=&Y_0^i+\int_0^t\int_A \tilde{b}(s,Y^i_s, \rho^{N,n}, a)g^i _s(da)ds+\sigma W^i_t+\sigma^0 W^0_t,\qquad 0\le t\le \tauey^i\ ,\\
	Y_0^i&=&\xi^i\nonumber\ ,
\end{eqnarray}
where $\xi^i\sim\xi$ and, for  $0\le t\le T$,
\begin{equation}\label{eq:rhoN}
	\rho^{N,n}_t = \E^{N,n}\big[\mu^N_t|\cF^{\xi,W,V^n}_t\big],
\end{equation}
where $\cF^{\xi,W,V^n }_t$ denotes the minimal filtration generated by $\xi^i ,W^i, V^n$, for $i=1,\ldots, N$, and satisfying the usual conditions, and
\[\mu^N_t:=\frac{1}{N}\sum_{i=1}^N\delta_{Y^i_t}(\cdot) 1_{[0,\tauey^i)}(t),\qquad \tauey^i:=\inf\{t\in[0,T]\ :\  Y^i_t\notin \cO\}.
\]
The difference with \eqref{eq:model2} is that the players do not exactly interact with the empirical sub-distribution of players who have not been absorbed. Indeed, the latter is integrated with respect to the common noise component on a finite number of events $A^n_1,\ldots,A^n_{|\cV^n|}$ and it is reminiscent of the approximated MFG problem induced by \eqref{eq:fMn}. Each player aims at maximizing the objective functional
\begin{equation}\label{eq:approxCost}
	J^{i,n}(\gamma)=\E\bigg[\int_0^{\tauey^i} \int_A f(t,Y^i_t,\langle h,\rho^{N,n}_t\rangle,a)g^i _s(da)dt+G(\tauey^i,X^i_{\tauey^i})\bigg].
\end{equation}

 The aim of this section is to show that the approximated MFG problem provides $\varepsilon$-Nash equilibria for the $N$-player game \eqref{eq:approxSDE}-\eqref{eq:approxCost}. 
Shortly denote $\cC:=\cC([0,T];\R^{d})$.
Define $\psi:\R^d\times \cC^2\to \cC^3$ as 
\[\psi(x_0,w,w^0)\mapsto (x_0+\sigma w_t +\sigma^0 w^0_t\ ,\ w_t,\ w^0_t)_{t\in[0,T]}.
\] 	
\begin{notation}
	In this section, the canonical process on $\cC^3$ is denoted by $\varphi=(\varphi_t)_{t\in[0,T]}$ with components $\varphi:=(x,w,w^0)$. $\W$ denotes the Wiener measure on $\cC$.
\end{notation}

\begin{lemma}\label{lem:cont_tau1} The map $\Psi:\prob(\R^d\times\cC)\to\prob(\cC^3)$ which associate to each $Q$ the distribution of $\psi$ under $Q\otimes\W$, namely,
		\[\Psi(Q)( \cdot  ):=\E^{Q\otimes\W}\big[1_{\psi^{-1}(\cdot)}\big],\] 
is continuous when both spaces are endowed with their respective $\tau$-topology. 
\end{lemma}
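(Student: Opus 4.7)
The plan is to unfold the definitions and reduce to a one-line application of Fubini's theorem. By the universal property defining the $\tau$-topology on $\prob(\cC^3)$ as the coarsest topology making all integration maps $\nu\mapsto\int f\,d\nu$ continuous for $f$ bounded measurable, continuity of $\Psi$ is equivalent to the requirement that, for each bounded measurable $f:\cC^3\to\R$, the composed map $Q\mapsto \int f\,d\Psi(Q)$ be continuous on $(\prob(\R^d\times\cC),\tau)$. So the first step is to fix such an $f$ and rewrite this integral in a form amenable to $\tau$-convergence on the source space.

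To that end, I would use the product structure $Q\otimes\W$ together with Fubini to write
\[
\int_{\cC^3} f\,d\Psi(Q) \;=\; \int_{\R^d\times\cC} F(x_0,w)\,Q(d(x_0,w)),\qquad F(x_0,w):=\int_\cC f\bigl(\psi(x_0,w,w^0)\bigr)\,\W(dw^0).
\]
Next, I would check that $F$ is a bounded Borel measurable function on $\R^d\times\cC$: boundedness is immediate ($\|F\|_\infty\le \|f\|_\infty$), and joint Borel measurability of $(x_0,w,w^0)\mapsto f(\psi(x_0,w,w^0))$ follows from the continuity of $\psi$ together with the Borel measurability of $f$, so Fubini applies and delivers the Borel measurability of $F$.

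Given this identification, the conclusion is automatic: by the definition of the $\tau$-topology applied to the test function $F$, the map $Q\mapsto \int F\,dQ = \int f\,d\Psi(Q)$ is $\tau$-continuous on $\prob(\R^d\times\cC)$, and this is exactly what was required. There is no genuine obstacle in the proof: morally, integrating against a fixed probability kernel along an independent coordinate, composed with a Borel map, always sends bounded measurable test functions to bounded measurable test functions, and hence preserves the $\tau$-topology. The role of the specific choice of $\psi$ and of $\W$ is really only cosmetic here, which is what makes this lemma convenient to state and painless to prove.
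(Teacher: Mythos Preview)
Your proposal is correct and follows essentially the same approach as the paper. Both arguments test against a bounded measurable $f:\cC^3\to\R$ and use Fubini to reduce to $\tau$-convergence on $\prob(\R^d\times\cC)$; you make the Fubini step explicit by introducing the marginalized test function $F(x_0,w)=\int_\cC f(\psi(x_0,w,w^0))\,\W(dw^0)$, while the paper phrases the same computation as the (equivalent) statement that $Q_\alpha\otimes\W\to Q\otimes\W$ in the $\tau$-topology.
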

\begin{proof}Let $\{Q_\alpha\} \subset \prob (\R^d \times \cC)$ be a net converging to $Q$ and $f:\cC^3\to\R$ bounded and measurable. By definition,
	\[
	\int_{\cC^3} f(\varphi) \Psi(Q_\alpha)(d\varphi)=\int_{\R^d\times \cC} \int_{\cC} f(\psi(x_0,w,w^0)) Q_\alpha(dx_0,dw)\W(dw^0).
	\]
	Using that $Q_\alpha\otimes\W$ converges to $Q\otimes\W$, the r.h.s.\ converges to
	\[
	\int_{\R^d\times \cC} \int_{\cC} f(\psi(x_0,w,w^0)) Q(dx_0,dw)\W(dw^0)=\int_{\cC^3} f(\varphi) \Psi(Q)(d\varphi),
	\]
	where for the last equality we used again the definition of $\Psi$.
\end{proof}
\begin{remark}\label{rmk:approx1}Let $\Psi$ be as in Lemma \ref{lem:cont_tau1}.
	\begin{enumerate} 
		\item For any input $Q$, the process $w^0$ is a Brownian motion under $\Psi(Q)$.
		\item Let $(X_0^i,W^i)$ be a sequence of i.i.d.\ initial conditions and Brownian motions on some probability space $(\Omega,\cF,\P)$. It is easy to see that
		\[
		\int f(\varphi) \Psi \bigg(\frac{1}{N}\sum_{i=1}^N\delta_{(X_0^i(\omega),W^i(\omega))}\bigg) (d\varphi) = \int_{\cC} \frac{1}{N}\sum_{i=1}^N f(X_0^i(\omega)+\sigma W^i(\omega)+\sigma^0 w^0, W^i(\omega),w^0)  \W (dw^0),
		\]
		for all measurable bounded functions $f: \cC^3 \to \mathbb \R$. 
	\end{enumerate}
\end{remark}

Fix $n\in\N$ and recall the fixed point $(\hat{\fm}^n_1,\ldots,\hat{\fm}^n_{|\cV^n|})$ from Corollary \ref{cor:fixedpointPhi}. This induces a flow of sub-probabilities $\mu^n$ as in \eqref{eq:induced_flow} through the random environment \eqref{eq:fMn}, which takes only finitely many values $\{\mu^{n,k}\}_{k=1,\ldots,|\cV^n|}$. 
From Remark \ref{rmk:approx1}, $w^0$ is a Brownian motion under any $\Psi (Q)$, therefore, the change of measure 
\begin{equation}\label{eq:def_psik}
\dfrac{d\Psi_k(Q)}{d\Psi (Q)}:=\sexp(U^{n,k})_T,\quad U^{n,k}_t(x,w,w^0):=\int_0^t\int_A (\sigma^0)^{-1} \tilde{b}(s,x_s,\mu^{n,k},a)\gamma^n_{s,x,w}(da)dw^0_s
\end{equation}
is well defined for every $k\in\{1,\ldots,|\cV^n|\}$. Moreover, using \ref{hp:cont_bdd} and Girsanov Theorem, the process $\tilde{w}^0_t:=w^0_t-\langle U^{n,k}\rangle_t$ is a Brownian motion under every $\Psi_k(Q)$. We denote by $\{\tilde{A}^n_1,\ldots,\tilde{A}^n_{|\cV^n|}\}$ the finite sigma-algebra generated by the discretization of $\tilde{w}^0$ as in Section \ref{sec:tight} and we set  $p_k:=\W(\tilde{A}^n_k)$.

\begin{lemma}\label{lem:cont_tau2} Let $k\in\{1,\ldots,|\cV^n|\}$.
	The map $\Psi_k:\prob(\R^d\times\cC)\to\prob(\cC^3)$ defined by $Q\mapsto\Psi_k(Q)$ as in \eqref{eq:def_psik} and the map $\hat{\Psi}_k:\prob(\R^d\times\cC)\to\prob(\cC^2)$ defined by
	\[
	\hat{\Psi}_k(Q)(B)=\Psi_k(Q)\big((x,w)^{-1}(B)\mid\tilde{A}^n_k\big), \qquad B\in\cB_{\cC^2}.
	\]
	are continuous when each space is endowed with the $\tau$-topology.
\end{lemma}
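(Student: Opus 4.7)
The plan is to establish $\tau$-continuity of $\Psi_k$ first via a truncation argument tailored to the $\tau$-topology, and then to deduce $\tau$-continuity of $\hat{\Psi}_k$ by rewriting integrals against $\hat{\Psi}_k(Q)$ as integrals against $\Psi_k(Q)$ of a bounded measurable function on $\cC^3$.

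For $\Psi_k$, I would start from the Radon--Nikodym representation
\[
\int f\, d\Psi_k(Q) = \int f\cdot Z_{n,k}\, d\Psi(Q),\qquad Z_{n,k} := \sexp(U^{n,k})_T,
\]
valid for any bounded measurable $f:\cC^3\to\R$. Since $\gamma^n_{s,x,w}$ is a probability kernel on $A$ and $(\sigma^0)^{-1}\tilde{b}$ is uniformly bounded (by \ref{hp:cont_bdd}, \ref{hp:sigmaconst} and Definition \ref{def:kernel}), the integrand defining $U^{n,k}$ has modulus bounded by some constant $c$ independent of $(x,w,w^0)$ and of $Q$. Because $w^0$ is a Brownian motion under every $\Psi(Q)$ by Remark \ref{rmk:approx1}, the identity $Z_{n,k}^2 = \sexp(2U^{n,k})_T\exp(\langle U^{n,k}\rangle_T)$ combined with $\langle U^{n,k}\rangle_T\le c^2 T$ yields the uniform moment bound $\sup_Q \E^{\Psi(Q)}[Z_{n,k}^2]\le e^{c^2 T}$. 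I would then split, for any $M>0$,
\[
\int f\, d\Psi_k(Q) = \int f\,(Z_{n,k}\wedge M)\, d\Psi(Q) + \int f\,(Z_{n,k}-Z_{n,k}\wedge M)\, d\Psi(Q).
\]
The first term is continuous in $Q$ in the $\tau$-topology by Lemma \ref{lem:cont_tau1}, since $f\cdot(Z_{n,k}\wedge M)$ is bounded measurable on $\cC^3$. The second term is dominated by $\|f\|_\infty M^{-1}\E^{\Psi(Q)}[Z_{n,k}^2]\le \|f\|_\infty e^{c^2 T}/M$, uniformly in $Q$. Letting $M\to\infty$ gives the desired $\tau$-continuity of $\Psi_k$.

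For $\hat{\Psi}_k$, since $\tilde{w}^0$ is a Brownian motion under every $\Psi_k(Q)$ and $\tilde{A}^n_k$ depends only on a discretization of $\tilde{w}^0$, the normalizing constant $\Psi_k(Q)(\tilde{A}^n_k)=p_k$ is independent of $Q$. For any bounded measurable $f:\cC^2\to\R$,
\[
\int f\, d\hat{\Psi}_k(Q) = \frac{1}{p_k}\int f(x,w)\, 1_{\tilde{A}^n_k}(x,w,w^0)\, d\Psi_k(Q),
\]
which is an integral against $\Psi_k(Q)$ of a bounded measurable function on $\cC^3$. The $\tau$-continuity of $\hat{\Psi}_k$ therefore follows immediately from that of $\Psi_k$. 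The main technical obstacle is the first step: the $\tau$-topology is characterized by bounded measurable test functions, so one cannot simply push the $\tau$-continuity of $\Psi$ through multiplication by the unbounded density $Z_{n,k}$; the truncation argument, made effective by the uniform $L^2$-estimate that ultimately rests on the boundedness of $\tilde{b}$, is precisely what bridges this gap.
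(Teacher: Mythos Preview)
Your proof is correct and follows essentially the same approach as the paper: the same truncation of the density $Z_{n,k}=\sexp(U^{n,k})_T$ combined with the uniform $L^2$-bound $\sup_Q\E^{\Psi(Q)}[Z_{n,k}^2]\le e^{c^2T}$ and the $\tau$-continuity of $\Psi$ from Lemma~\ref{lem:cont_tau1}, and then the same representation of $\hat{\Psi}_k(Q)$ as $p_k^{-1}\int f(x,w)\,1_{\tilde A^n_k}\,d\Psi_k(Q)$ for the second claim. The only cosmetic difference is that you phrase the first step as a uniform approximation of $Q\mapsto\int f\,d\Psi_k(Q)$ by $\tau$-continuous maps, whereas the paper estimates the difference $\int f\,(d\Psi_k(Q_\alpha)-d\Psi_k(Q))$ directly along a net; these are equivalent.
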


\begin{proof}
	As $n$ and $k$ are fixed we shortly denote $Z_T:=\sexp(U^{n,k})_T$ from \eqref{eq:def_psik} and observe that $Z_T$ does not depend on $Q$.
	Let $\{Q_\alpha\} \subset \prob (\R^d \times \cC)$ be a net converging to $Q$ and $f:\cC^3\to\R$ bounded and measurable. For any $K>0$,
	\begin{eqnarray*}
		\left|\int_{\cC^3} f (d\Psi_k(Q_\alpha)- d\Psi_k(Q))\right|&=&\left| \int_{\cC^3} Z_T\ f \ (d\Psi(Q_\alpha)-d\Psi(Q))\right|\\
		&\le&\left|\int_{\cC^3} Z_T\wedge K\ f\  (d\Psi(Q_\alpha)-d\Psi(Q))\right|\\
		&&+\left|\int_{\cC^3} (Z_T-Z_T\wedge K)\ f\ d\Psi(Q_\alpha)\right|\\
		&&+\left|\int_{\cC^3} (Z_T-Z_T\wedge K)\ f\ d\Psi(Q)\right|.
	\end{eqnarray*}
	The last two terms are bounded by a multiple of $\sup_{P\in\prob(\R^d\times\cC)}\int Z_T1_{\{Z_T>K\}}d\Psi(P)$ which converges to zero as $K\to\infty$ since
	\[
	\int (Z_T)^2d\Psi(P)=\E^P\left[e^{2U^{n,k}_T-\langle U^{n,k}\rangle_T}\right]\le e^{c^2T},
	\]
	where $c$ is an upper bound for $|\sigma^{-1}\tilde{b}|$ that exists from \ref{hp:cont_bdd}. For any $K$ fixed, the first term converges to zero from the continuity of $\Psi$ by Lemma \ref{lem:cont_tau1}.
	Thus, by taking first the limit $Q_{\alpha}\to Q$ and then $K\to\infty$ we obtain the desired convergence.
	
	For the second claim simply observe that for a bounded and measurable $f:\cC^2\to\R$,
		\[
	\E^{\hat{\Psi}_k(Q)}[f(x,w)]=\frac{1}{p_k}\E^{\Psi_k(Q)}\big[f(x,w)1_{A^n_k}(\tilde{w}^0)\big].
	\]
	 The continuity of $\Psi_k$ allows to conclude.
\end{proof}
\begin{remark}\label{rmk:image_through_Psi}
	\begin{enumerate}
		\item Let $(X_0^i,W^i)$ be a sequence of i.i.d.\ initial conditions and Brownian motions \ on some probability space $(\Omega,\cF,\P)$. Then,
		\[
		\hat{\Psi}_k\bigg(\frac{1}{N}\sum_{i=1}^N\delta_{(X_0^i(\omega),W^i(\omega))}\bigg)=	
		\frac{1}{N}\frac{1}{p_k}\sum_{i=1}^N\int_{A^n_k}\delta_{(X^i(\omega,\omega^0),W^i(\omega))}\W(d \omega^0),
		\]
		where $(X^1,\ldots,X^N)$ satisfies the following system of SDEs on $\Omega\times \cC$: 
		\begin{equation}\label{eq:SDEeqmeas}
			X^i_t=X_0^i+\int_0^t\int_A \tilde{b}(s,X^i_s,\mu^{n,k},a)\gamma^n_{s,X^i,W^i}(da)ds+\sigma W^i_t+\sigma^0 W^0_t
		\end{equation}
	and $W^0$ is a Brownian motion on $\cC$ independent of every $(X_0^i,W^i)$ for $i=1,\ldots,N$.
		\item Choosing $Q=\law(\xi)\otimes\W$, the distribution $\Psi(Q)$ in Lemma \ref{lem:cont_tau1} is that of a process satisfying $X_t=\xi+\sigma W_t+\sigma^0W^0_t$ with $(\xi,W,W^0)$ independent initial condition and Brownian motions. Given an input flow of sub-probabilities, the optimally controlled dynamics from  Proposition \ref{prop:exFBSDE} is obtained as a Girsanov transformation of $\Psi(Q)$. If we choose as input $\mu^n$, namely the one induced by the fixed point $(\hat{\fm}^n_1,\ldots,\hat{\fm}^n_{|\cV^n|})$, by definition of fixed point we have that the law of $(x,w)$ conditional to $A^n_k$ is again $\hat{\fm}^n_k$ (see also \eqref{eq:mnkfixpoint}). Since the change of measure in \eqref{eq:def_psik} is defined from the fixed point and its optimal control $\gamma^n$,  we have  
		\begin{equation}\label{eq:image_through_Psi_fix}
			\hat{\Psi}_k(\law(\xi)\otimes\W)=\hat{\fm}^n_k.
		\end{equation}
		
	\end{enumerate}
\end{remark}

We next consider the controlled dynamics $(Y^1,\ldots,Y^N)$ solution to \eqref{eq:approxSDE} with control $\gamma^n_{s,Y^i,W^i}(da)$ on a certain filtered probability space $(\Omega^{N,n},\cF^{N,n},\F^{N,n},\P^{N,n})$ and we recall that $\rho^{N,n}$, defined in \eqref{eq:rhoN}, takes only finitely many values as a function of $\omega^0$.
\begin{proposition}\label{prop:Sanov}
	Let $O$ be an open set, with respect to the $\tau$-topology on $\prob(\cC^2)$, containing $\hat{\fm}^n_k$. Then, 
	\[\lim_{N\to\infty}\P^{N,n}\bigg(\hat{\Psi}_k\bigg(\frac{1}{N}\sum_{i=1}^N\delta_{(X_0^i,W^i)}\bigg)\notin O\bigg)=0.
	\]
\end{proposition}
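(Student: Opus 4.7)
My plan is to factor the claim through the $\tau$-continuity of $\hat\Psi_k$ (Lemma~\ref{lem:cont_tau2}) and reduce it to a $\tau$-topology law of large numbers for the empirical measure of the i.i.d.\ pairs $(X_0^i,W^i)$.

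A preliminary observation is that, by the weak construction of the approximate $N$-player game, under $\P^{N,n}$ the family $\{(X_0^i,W^i)\}_{i=1}^N$ is i.i.d.\ with common law $\law(\xi)\otimes\W$ on $\R^d\times\cC$. Indeed, $X_0^i=\xi^i$ are $\cF_0$-measurable, so their joint law is unaffected by the Girsanov change of measure defining $\P^{N,n}$; and the $W^i$ appearing in \eqref{eq:approxSDE} are by definition the $N$-dimensional $\P^{N,n}$-Brownian motion produced by that Girsanov transformation, whose components remain mutually independent (all cross-variations vanish) and independent of the $\xi^j$'s.

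Set $L_N := \tfrac{1}{N}\sum_{i=1}^N \delta_{(X_0^i, W^i)}$. By Lemma~\ref{lem:cont_tau2} and \eqref{eq:image_through_Psi_fix}, the preimage $\hat\Psi_k^{-1}(O)$ is $\tau$-open and contains $\law(\xi)\otimes\W$, so it suffices to show that $\P^{N,n}(L_N\notin \hat\Psi_k^{-1}(O))\to 0$. By the very definition of the $\tau$-topology, $\hat\Psi_k^{-1}(O)$ contains a basic $\tau$-open neighborhood of $\law(\xi)\otimes\W$: there exist bounded measurable $f_1,\dots,f_m:\R^d\times\cC\to\R$ and $\varepsilon>0$ such that
\[
\Bigl\{Q\in\prob(\R^d\times\cC):\bigl|\langle f_j,Q\rangle-\langle f_j,\law(\xi)\otimes\W\rangle\bigr|<\varepsilon,\ j=1,\dots,m\Bigr\}\subset \hat\Psi_k^{-1}(O).
\]
A union bound then yields
\[
\P^{N,n}\bigl(L_N\notin \hat\Psi_k^{-1}(O)\bigr)\le \sum_{j=1}^m \P^{N,n}\Bigl(\bigl|\tfrac{1}{N}{\textstyle\sum_i} f_j(X_0^i,W^i)-\langle f_j,\law(\xi)\otimes\W\rangle\bigr|\ge\varepsilon\Bigr),
\]
and Chebyshev's inequality applied to the sample mean of i.i.d.\ bounded random variables bounds each summand by $\|f_j\|_\infty^2/(N\varepsilon^2)$, which vanishes as $N\to\infty$.

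The only genuinely delicate point is the preliminary observation that $(X_0^i,W^i)_i$ are i.i.d.\ with law $\law(\xi)\otimes\W$ under $\P^{N,n}$. Once this is in place, together with the $\tau$-continuity of $\hat\Psi_k$ (which is the real work, already carried out in Lemma~\ref{lem:cont_tau2}), the conclusion follows from a one-line quantitative LLN, with rate $O(1/N)$.
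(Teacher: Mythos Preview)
Your argument is correct and takes a genuinely more direct route than the paper's. The key observation you make---that under $\P^{N,n}$ the pairs $(X_0^i,W^i)$, with $W^i$ the Brownian motions driving \eqref{eq:approxSDE}, are i.i.d.\ with law $\law(\xi)\otimes\W$---is valid: the multidimensional Girsanov change that defines $\P^{N,n}$ turns the driving noises into a joint $Nd$-dimensional $\P^{N,n}$-Brownian motion independent of $\cF_0$, so its components are automatically mutually independent and independent of the (law-unchanged) $\xi^i$'s. Once this is granted, your reduction via the $\tau$-continuity of $\hat\Psi_k$ (Lemma~\ref{lem:cont_tau2}) together with \eqref{eq:image_through_Psi_fix} to a quantitative weak law of large numbers on finitely many bounded test functions immediately gives the conclusion, with rate $O(1/N)$.

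The paper proceeds differently, in two steps: first, on an auxiliary measure $\Q$ under which $(X_0^i,W^i)$ are i.i.d., it invokes Sanov's theorem in the $\tau$-topology to obtain an exponential large-deviations upper bound for $\Q(\zeta^n_k\notin O)$; second, it transfers the conclusion from $\Q$ to $\P^{N,n}$ by a time-slicing change-of-measure argument (following Lacker) that controls the Radon--Nikod\'ym derivative $d\P^{N,n}/d\Q$ whose drift involves the discrepancy $\rho^{N,n}-\mu^n$. Your observation effectively allows one to take $\Q=\P^{N,n}$ from the outset and skip the second step entirely; Sanov itself is then also overkill, since only convergence in probability is needed and Chebyshev suffices. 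What the paper's approach buys is an exponential rate under $\P^{N,n}$ (via the iteration), and it follows the standard template from the mean-field-limit literature designed for situations where the i.i.d.\ structure is available only under an auxiliary measure and the interaction must be removed by change of measure. In the present weak formulation that extra machinery is not needed for the stated result.
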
 
\begin{proof}
	Consider first a probability space $(\Omega,\cF,\Q)$ supporting i.i.d sequences $(X^i_0,W^i)$ with $X^i_0\sim\xi$ and $W^i$ Brownian motions\ independent of $X^i_0$. Define $\zeta^n_k:=\hat{\Psi}_k(\frac{1}{N}\sum_{i=1}^N\delta_{(X_0^i,W^i)})$.
	We first show that, for any open set $O$ containing $\hat{\fm}^n_k$, \[
	\lim_{N\to\infty}\Q(\zeta^n_k\notin O)=0.
	\]
	From Sanov's Theorem (cf. \cite[Theorem 6.2.10]{dembo2010}) we know that for any open set $O'$,
	\[\limsup_{N\to\infty} \frac{1}{N}\log \Q\bigg(\frac{1}{N}\sum_{i=1}^N\delta_{(X_0^i,W^i)}\notin O'\bigg)\le \inf_{P\notin O'}\cH(P|\law(\xi)\otimes\W),
	\]
	where $\cH$ is the relative entropy. For any open set $O$,  apply the above result to $O'=(\hat{\Psi}_k)^{-1}(O)$, which is again open from Lemma \ref{lem:cont_tau2}.
	By observing that the sub-level sets $\{P : \cH(P\mid Q)\le a\}$ are compact for any $a\in\R$ and $Q\in\prob(\R^d\times\cC)$, see \cite[Lemma 2.1]{lacker2018}, we deduce that the infimum in the last inequality is attained by some measure in $O'$. Observe now that \eqref{eq:image_through_Psi_fix} in Remark \ref{rmk:image_through_Psi} ensures that $\law(\xi)\otimes\W\in\hat{\Psi}_k^{-1}(\hat{\fm}^n_k)$ and, by definition of relative entropy, $\cH(P|\law(\xi)\otimes\W)=0$ if and only if $P=\law(\xi)\otimes\W$. We conclude that $\inf_{P\notin (\hat{\Psi}_k)^{-1}(O)}\cH(P|\law(\xi)\otimes\W)>0$ if and only if $O$ contains $\hat{\fm}^n_k$, from which the claim follows. 
	
	The rest of the proof follows closely \cite[Theorem 2.6]{lacker2018}, we provide a sketch for completeness. Recall that $n\in\N$ is fixed. From Remark \ref{rmk:image_through_Psi} the random probability measure $\zeta^n_k$ is the empirical measure of \eqref{eq:SDEeqmeas} (conditional to $A^n_k$), where the drift depends on the flow of sub-probabilities  $\mu^n$ induced by the equilibrium measure $\P^n$ (see Notation \ref{notation:Pmu}). If we consider the change of measure $d \P^{N,n}/d\Q=\sexp(\int_0^\cdot \sum_{i=1}^N\Delta \tilde{b}^i_sdW^i_s)_T$ with
	\[\Delta\tilde{b}^i_t:=\int_A \sigma^{-1} \big(\tilde{b}(t,x_t,\rho^{N,n},a)-\tilde{b}(t,x_t,\mu^{n},a)\big)\gamma^n_{t,X,W}(da),
	\]
	we have that $\P^{N,n}\circ(X^1,\ldots,X^N)$ is a weak solution of \eqref{eq:approxSDE}.
	Fix $p,q\in(1,\infty)$ and denote by $p^*,q^*$ their conjugates. Assume $p$ and $q$ are such that $M = LTpq/2$ is an integer and define $t_j = jT/M$ for $j=0\ldots,M$. We show by iteration that for every $O$,
	\[\limsup_{N\to\infty} \frac{1}{N}\log \E^\Q\left[\P^{N,n}(\zeta^n_k\notin O'\mid \cF_{t_j})\right]\le-(p^*q^*)^{-(M-j)} \inf_{P\notin (\hat{\Psi}_k)^{-1}(O)}\cH(P|\law(\xi)\otimes\W).
	\]
	For $j=M$ the conditioning does not alter $\zeta^n_k$, since it is $\cF_T$-measurable and the result follows from the first part of the proof.
	Given the result for some $j$, the tower property of the conditional expectation and H\"{o}lder inequality yield
	\begin{eqnarray*}\limsup_{N\to\infty} \frac{1}{N}\log \E^\Q\left[\P^{N,n}(\zeta^n_k\notin O'\mid \cF_{t_{j-1}})\right]&\le&\limsup_{N\to\infty} \frac{1}{p^*q^*N}\log \E^\Q\left[\P^{N,n}(\zeta^n_k\notin O'\mid \cF_{t_j})\right]\\
		&&+\limsup_{N\to\infty} \frac{1}{p^*q^*N}\log \E^\Q\left[e^{\frac{pq}{2}\int_{t_{j-1}}^{t_j}\sum_{i=1}^N|\Delta\tilde{b}^i_t|^2dt}\right].
	\end{eqnarray*}
	Since $\Delta\tilde{b}^i$ are uniformly bounded, the second term is non positive and the iterative inequality follows.
\end{proof}

The previous convergence results allow us to obtain the main result of this section, namely, the existence of $\varepsilon$-Nash equilibria for the approximate problem \eqref{eq:approxSDE}-\eqref{eq:approxCost}. Recall that $\gamma^n$ is the relaxed control of Proposition \ref{prop:feedback}.
\begin{theorem}\label{thm:eps_Nash_approx}
	Let $n \geq 1$ be fixed. For every $\varepsilon>0$ there exists $N_{\varepsilon}\in\N$, such that, for any $N\ge N_{\varepsilon}$, the relaxed control $\gamma^{N,n}:=(\gamma^n_{\cdot,Y^1,W^1},\ldots,\gamma^n_{\cdot,Y^N,W^N})$ is an $\varepsilon$-Nash equilibrium for the $N$-player game \eqref{eq:approxSDE}-\eqref{eq:approxCost}.
\end{theorem}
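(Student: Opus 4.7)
The plan is to adapt the classical propagation-of-chaos argument to our setting with absorption, common noise, and the discretized interaction $\rho^{N,n}$. The mechanism is that when all $N$ players use the feedback $\gamma^n$ from Proposition \ref{prop:feedback}, the conditional empirical flow $\rho^{N,n}_t$ becomes asymptotically close to the MFG flow $\mu^n_t$, so each player's value converges to the MFG value $J^n(\hat\gamma^n)=J^n(\gamma^n)$. Conversely, if a single player deviates, the empirical flow generated by the remaining $N-1$ players is essentially unaffected in the limit, so the deviating player faces an optimization problem against the fixed environment $\mu^n$, which is sub-optimized by $\hat\gamma^n$.

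The first step is to translate Proposition \ref{prop:Sanov} into convergence of $\rho^{N,n}_t$ to $\mu^n_t$. By Remark \ref{rmk:image_through_Psi} and the construction of Section \ref{sec:fixedpoint}, on each event $A^n_k$ of positive probability the conditional empirical distribution of $(Y^i,W^i)$ induced by the profile $\gamma^{N,n}$ coincides with $\hat\Psi_k\bigl(\frac{1}{N}\sum_{i=1}^N\delta_{(X_0^i,W^i)}\bigr)$, which converges in probability to $\hat{\fm}^n_k$ in the $\tau$-topology by Proposition \ref{prop:Sanov}. Combining this with the boundedness and Lipschitz continuity of $h$ from \ref{hp:cont_bdd} yields the convergence in probability of $\langle h,\rho^{N,n}_t\rangle$ and $\langle 1,\rho^{N,n}_t\rangle$ to $\langle h,\mu^n_t\rangle$ and $\langle 1,\mu^n_t\rangle$, uniformly in $t\in[0,T]$.

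I would then deduce the value convergence $J^{i,n}(\gamma^{N,n})\to J^n(\gamma^n)$ (the limit being independent of $i$ by exchangeability) via the continuity of $b$, $f$ and $G$ in the measure-dependent arguments (which enter only through $\langle h,\cdot\rangle$ and $\langle 1,\cdot\rangle$), combined with the $\P$-a.s.\ continuity of the exit time map $x\mapsto\tau(x)$ on the relevant subset, as used in the proof of Theorem \ref{thm:existence} (cf.\ \cite[Lemma D.3]{CF18}). For a deviation by player $i$ to any admissible $\beta$, define $\tilde\rho^{N,n,-i}_t$ as in \eqref{eq:rhoN} but using only the $N-1$ non-deviating players: the difference $\rho^{N,n}_t-\tilde\rho^{N,n,-i}_t$ is at most $1/N$ in total variation, independently of $\beta$, and $\tilde\rho^{N,n,-i}_t$ still converges to $\mu^n_t$ in probability by a direct extension of the previous step. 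Hence player $i$'s dynamics under $\beta$ is, up to vanishing perturbations in the measure input, that of a representative agent in the MFG \eqref{eq:MFG_state_relax} with input $\mu^n$, leading to
\[\sup_{\beta}\bigl|J^{i,n}([\gamma^{N,n,-i},\beta])-J^n(\beta)\bigr|\to 0,\quad N\to\infty,\]
where $J^n(\beta)$ denotes the MFG cost \eqref{eq:MFG_cost_relax_n} with input $\mu^n$. Since $\hat\gamma^n$ is optimal for this MFG and $J^n(\hat\gamma^n)=J^n(\gamma^n)$ by Proposition \ref{prop:feedback}, combining with the previous step gives, for $N$ large enough, $J^{i,n}([\gamma^{N,n,-i},\beta])\le J^n(\gamma^n)+\varepsilon/2 \le J^{i,n}(\gamma^{N,n})+\varepsilon$ uniformly in $\beta$ and $i$.

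The hard part is the uniformity in $\beta$ in the deviation step: the set of admissible controls is not compact, so continuity of the cost functional alone is insufficient. The key observation is that the non-deviating players' empirical measure does not depend on $\beta$, and the single deviating player's contribution to $\rho^{N,n}$ is of order $1/N$ uniformly over $\beta$. Together with the uniform $L^2$-bound on the Girsanov density from \eqref{eq:compact_measures}, which holds uniformly in $\beta$ thanks to $\sigma^{-1}\tilde b$ being bounded, this yields the required uniform moment estimates on the gap between $Y^{i,\beta}$ and the MFG representative agent controlled by $\beta$, and hence the desired uniform convergence of the cost.
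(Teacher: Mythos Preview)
Your strategy coincides with the paper's: use Proposition~\ref{prop:Sanov} to turn the $\tau$-topology convergence $\hat\Psi_k(\tfrac1N\sum\delta_{(X_0^i,W^i)})\to\hat\fm^n_k$ into convergence $\langle u,\rho^{N,n}_t\rangle\to\langle u,\mu^n_t\rangle$, hence $J^{i,n}(\gamma^{N,n})\to J^n(\gamma^n)$, and then bound a deviation by comparing with the MFG optimum. The difference is in how the deviation is handled, and there your argument has a gap. You assert that $\tilde\rho^{N,n,-i}$ ``does not depend on $\beta$'' and that its convergence to $\mu^n$ follows ``by a direct extension of the previous step''. But $\rho^{N,n}$ is a conditional expectation under the measure corresponding to the control profile, and the law of $\tilde\rho^{N,n,-i}$ under $\P^{N,n,\beta}$ \emph{does} depend on $\beta$; the previous step was carried out under $\P^{N,n}$, not under $\P^{N,n,\beta}$. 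The paper does not decompose into $N-1$ players plus one; it transfers the Sanov convergence directly from $\P^{N,n}$ to the deviation measure via the one-line estimate $\P^{N,n,g}(\zeta^n_k\notin O)=\E^{N,n}[\tfrac{d\P^{N,n,g}}{d\P^{N,n}}\,1_{\{\zeta^n_k\notin O\}}]$, using that a single player's drift change yields a Radon--Nikod\'ym derivative with second moment bounded uniformly in $N$ and in $g$. It then shows $\cH\bigl(\P^{N,n,g}\!\circ\!(Y^N)^{-1}\,\big|\,Q^n\bigr)\to 0$ by the same boundedness of $b$, which delivers convergence of the deviating player's cost without any pathwise decoupling claim. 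Your final paragraph invokes the same Girsanov $L^2$ bound, but only for the gap between $Y^{i,\beta}$ and the MFG representative; you do not use it where it is actually needed, namely to pass the convergence of $\rho^{N,n}$ (or $\tilde\rho^{N,n,-i}$) to the deviation measure.

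The second issue is your uniform statement $\sup_\beta\bigl|J^{i,n}([\gamma^{N,n,-i},\beta])-J^n(\beta)\bigr|\to 0$. Apart from the question of what $J^n(\beta)$ means when $\beta$ is an open-loop control depending on all $N$ players' paths, uniformity over the non-compact set of admissible controls does not follow from the ingredients you list. The paper avoids this entirely: it fixes, for each $N$, a near-optimal deviation $g$ and shows $\limsup_N J^{N,n}(\gamma^{N,n,g})\le J^n(g)\le J^n(\gamma^n)$ via the entropy estimate above, then combines with $J^{N,n}(\gamma^{N,n})\to J^n(\gamma^n)$ in a three-step inequality to conclude. That $\limsup$ argument is the correct replacement for your uniform-in-$\beta$ claim.
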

\begin{proof}

	In this proof we need to consider two different vectors of relaxed controls, $\gamma^{N,n}$ given by the statement and $\gamma^{N,n,g}$ where only player $N$ deviates from $\gamma^{N,n}$ by choosing an alternative relaxed control $g_t = g(t,(Y^i)_{i=1}^N,(W^i)_{i=1}^N)$ instead of $\gamma^{n}_{t,Y^N,W^N}$. The choice of the $N$-th player is arbitrary and convenient for the notation. We start again on a probability space supporting i.i.d sequences $(X^i_0,W^i)$ with $X^i_0\sim\xi$, $W^i$ Brownian motions independent of $X^i_0$ and further independent of a Brownian motion $W^0$. We obtain the dynamics \eqref{eq:approxSDE} corresponding to $\gamma^{N,n}$ and $\gamma^{N,n,g}$ by the usual Girsanov transformations and we denote by $\P^{N,n}$ and $\P^{N,n,g}$ the corresponding probabilities. In both cases, the dynamics of the first $N-1$ players is induced by $g^i=\gamma^n_{\cdot,Y^i,W^i}$ for $i=1,\ldots,N-1$ and only the last equation is different. To distinguish the empirical sub-probabilities \eqref{eq:rhoN} in the two cases, we denote by $\rho^{N,n}$ the one corresponding to the vector of controls $\gamma^{N,n}$ and by $\rho^{N,n,g}$ the one where player $N$ deviates to $g$, i.e., where
	\[
	Y^N_t=Y^N_0+\int_0^t\int_A \tilde{b}(s,Y^N_s,\rho^{N,n,g},a) g_s(da)ds+\sigma W^N_t+\sigma^0 W^0_t.
	\]
	Our first aim is to show that $\langle u,\rho^{N,n}_s\rangle$ and $\langle u,\rho^{N,n,g}_s\rangle$ converge in probability to $\langle u,\mu^n_s\rangle$ as $N\to\infty$, for all $u$ measurable and bounded. Recall that $\mu^n$ is the equilibrium measure induced by the fixed point $(\hat{\fm}^n_1,\ldots,\hat{\fm}^n_{|\cV^n|})$, i.e. it is obtained via \eqref{eq:induced_flow} with $\fM^n$ as in \eqref{eq:fMn}. From Proposition \ref{prop:Sanov} we know that $\lim_{N\to\infty}\P^{N,n}(\zeta^n_k\notin O)=0$, where $\zeta^n_k=\hat{\Psi}_k(\frac{1}{N}\sum_{i=1}^N\delta_{(X_0^i,W^i)})$ and $O$ is an open set containing $\hat{\fm}^n_k$. Note that under $\P^{N,n}$, 
	\[\zeta^n_k= \frac{1}{N}\frac{1}{p_k}\sum_{i=1}^N\int_{A^n_k}\delta_{(Y^i(\omega,\omega^0),W^i(\omega))}\W(d \omega^0),
	\]
	where $(Y^1,\ldots,Y^N)$ satisfies \eqref{eq:approxSDE} with vector of controls $\gamma^{N,n}$. Due to \eqref{eq:rhoN}, on every $A^n_k$, we have that $\langle u, \rho^{N,n}_s\rangle=\int_{\cC([0,T];\R^{d})} u(x_s)1_{\{\tau(x)>s\}}\zeta^n_k(dx)$. The convergence in $\tau$-topology to $\hat{\fm}^n_k$ for every $A^n_k$ and the fact that the partition $A^n_1,\ldots,A^n_{|\cV^n|}$ is finite imply that $\langle u,\rho^{N,n}_s\rangle$ converges to $\langle u,\mu^n_s\rangle$ as $N\to\infty$. The same conclusion holds for $\langle u,\rho^{N,n,g}_s\rangle$. Indeed, recalling that $b$ is bounded from Assumption \ref{ass:alltogether}, we deduce
	\[\P^{N,n,g}(\zeta^n_k\notin O)=\E^{N,n}\bigg[\frac{d\P^{N,n,g}}{d\P^{N,n}}1_{\{\zeta^n_k\notin O\}}\bigg]\le C \P^{N,n}(\zeta^n_k\notin O),
	\] for some $C>0$. Using Proposition \ref{prop:Sanov} we have $\lim_{N\to\infty}\P^{N,n,g}(\zeta^n_k\notin O)=0$ and the same proof applies.
	
	We use this fact to show that $Q^{N,n,g}:=\P^{N,n,g}\circ(Y^N)^{-1}$ converges to the law of the solution to
	\begin{equation}\label{eq:limiting_approx}
		X_t=X_0+\int_0^t\int_A \tilde{b}(s,X_s,\mu^n,a) g_s(da) ds+\sigma W_t+\sigma^0 W^0_t,
	\end{equation}
	that we denote by $Q^{n}$. More precisely, by calculating the relative entropy we obtain 
	\[
	\cH(Q^{N,n,g}\mid Q^{n})=\frac{1}{2}\E^{Q^{N,n,g}}\left[\int_0^{\tauex}|\sigma^{-1}\Delta^n \tilde{b}_t|^2 dt\right],
	\]
	where $\Delta^n \tilde{b}_t:=\int_A (\tilde{b}(s,Y^N_s,\mu^n,a)-\tilde{b}(s,Y^N_s,\rho^{N,n,g},a)) g_s(da) ds$. Recall now that $b$ is continuous and bounded by Assumption \ref{ass:alltogether} and $\tilde{b}$ defined in \eqref{eq:btilde} is given by
	\[
	\tilde{b}(t,x,m,a):=b(t,x,\langle h,m_t\rangle,a)+k(0)\langle 1,m_t\rangle+\int_0^tk'(t-s)\langle 1, m_s\rangle ds.
	\]
	Using the convergence in probability of $\langle u,\rho^{N,n,g}_s\rangle$ to $\langle u,\mu^n_s\rangle$ for all $u$ measurable and bounded, together with the continuity and boundedness of $b$, we obtain 	$\cH(Q^{N,n,g}\mid Q^{n})\to 0$, as $N\to\infty$.
	
	We can then proceed as in \cite{CGL19} for the construction of $\varepsilon$-Nash equilibria for the approximated problem. Denote by $\cG$ the class of relaxed control and consider, as above, the vectors $\gamma^{N,n}$ and $\gamma^{N,n,g}$ where only player $N$ chooses a different $g\in\cG$. Recall also that $J^{N,n}$ is the objective functional of player $N$ in the approximated game as defined in \eqref{eq:approxCost} and $J^{n}$ is the mean field one as in \eqref{eq:MFG_cost_relax_n}. To conclude the proof we need to show:
	\begin{enumerate}
		\item $\lim_{N\to\infty} J^{N,n}(\gamma^{N,n})=J^n(\gamma^n)$;
		\item For any $g\in\cG$ such that $J^{N,n}(\gamma^{N,n,g})\ge \sup_{\tilde{g}\in\cG}J^{N,n}(\gamma^{N,n,\tilde{g}})-\varepsilon/2$, we have \[\limsup_{N\to\infty}J^{N,n}(\gamma^{N,n,g})\le J^n(\gamma^n)\ ;\]
		\item $J^{N,n}(\gamma^{N,n})\ge  \sup_{\tilde{g}\in\cG}J^{N,n}(\gamma^{N,n,\tilde{g}})-\varepsilon$, for $N$ large enough.
	\end{enumerate}
	The first item is the convergence of the value function when using the control $\gamma^n$ in the $N$-player game and in the mean field limit. The convergence of the state dynamics to the mean field limit \eqref{eq:limiting_approx} has been shown in the first part of the proof for an arbitrary control $g\in\cG$. Similarly, since all the functions defining $J^{N,n}$ and $J^n$ are bounded and continuous by Assumption \ref{ass:alltogether}, the convergence of the value functions is again a direct consequence of the convergence of $\langle u,\rho^{N,n}_s\rangle$ to $\langle u,\mu^n_s\rangle$ for every bounded and measurable $u$.
	
	For the second item, we use again the first part of the proof to ensure the convergence of $J^{N,n}(\gamma^{N,n,g})$ to $J^n(g)$. Using the fact that $\gamma^n$ is optimal for the mean field problem, we deduce
	\[
	\limsup_{N\to\infty}J^{N,n}(\gamma^{N,n,g})=J^n(g)\le \sup_{\tilde{g}\in\cG}J^n(\tilde{g})=J^n(\gamma^n).
	\]
	
	For the third item, 
	\[
	J^{N,n}(\gamma^{N,n})- \sup_{\tilde{g}\in\cG}J^{N,n}(\gamma^{N,n,\tilde{g}})\ge J^{N,n}(\gamma^{N,n})-J^n(\gamma^n)+J^n(\gamma^n)-J^{N,n}(\gamma^{N,n,g})-\varepsilon/2.
	\]
	From the first two items we can choose $N$ large enough so that the conclusion follows.
\end{proof}

%%%%%%%%%%%%%%%%%%%%%%%%%%%%%%%%%%%%%%%%%%%%%%%%%%%%%
\subsection{Uniform approximation}
In this subsection, we work again in the framework introduced in Section \ref{sec:MFG} under the following:
\begin{assumption} In addition to Assumption \ref{ass:alltogether}, we assume that
	\label{ass:Nasheq}
	\begin{enumerate}[label=\textnormal{\textbf{(H\arabic*)}},ref=(H\arabic*)] \setcounter{enumi}{4}
		\item\label{ass:lipb} $b(t,x,m,a)$ is Lipschitz continuous in the real variable $m$.
		\item\label{ass:f_indep_meas} The running cost is of the form $f(t,x)$.
	\end{enumerate}
\end{assumption}
All the results of the previous section are still valid under \ref{ass:f_indep_meas} thanks to Remark \ref{rmk:set-valued} and, in particular, they can be satisfied by the model of Section \ref{sec:motivation}.

For any $N,n\in\N$ and for some relaxed control $\gamma$, denote by $\P^{N,\gamma}$ the law of the process $(X^1,\ldots,X^N)$ satisfying $X^i_0=\xi^i$ and
	\begin{equation}\label{eq:SDErelax}
	X^i_t=X^i_0+\int_0^t\int_A \tilde{b}(s,X^i_s,\mu^N,a)\gamma_s (da)ds+\sigma W^i_t+\sigma^0 W^0_t,
\end{equation}
which is the relaxed form of \eqref{eq:model2}.
Respectively, $\P^{N,n,\gamma}$ is the law of the process $(Y^{1,n},\ldots,Y^{N,n})$ satisfying \eqref{eq:approxSDE}. As commented earlier, the difference is that in \eqref{eq:approxSDE} the players interact with the empirical sub-distribution of players who have not been absorbed integrated with respect to the common noise component on a finite number of events $A^n_1,\ldots,A^n_{|\cV^n|}$. Recall that such sets are induced by the random variable $V^n$ constructed at the beginning of Section \ref{sec:tight} which is a discretization of the common noise for which \eqref{eq:discret_points} is true.
\begin{lemma}\label{lem:unif_entropy}
	$\cH(\P^{N,\gamma}|\P^{N,n,\gamma})\to 0$ as $n\to\infty$, uniformly in $N$ and in $\gamma$.
\end{lemma}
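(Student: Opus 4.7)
The strategy is to compute the relative entropy via a Girsanov change of measure and then bound the result using the Lipschitz assumptions of the coefficients and the defining property of $\rho^{N,n}$ as a conditional expectation. Both $\P^{N,\gamma}$ and $\P^{N,n,\gamma}$ arise from a common Wiener reference measure by Girsanov transformations, with drifts $\sigma^{-1}\tilde b$ evaluated at $\mu^{N}$ and $\rho^{N,n}$ respectively, sharing the same initial data and the same common-noise component $W^0$. Since $\sigma^{-1}\tilde b$ is bounded by \ref{hp:cont_bdd}, \ref{hp:sigmaconst} and Definition \ref{def:kernel}, the standard Cameron-Martin identity yields
\[
\cH(\P^{N,\gamma}|\P^{N,n,\gamma})=\frac{1}{2}\,\E^{N,\gamma}\Bigl[\sum_{i=1}^{N}\int_{0}^{T}|\sigma^{-1}\Delta\tilde b^{i}_{s}|^{2}\,ds\Bigr],
\]
with $\Delta\tilde b^{i}_{s}=\int_{A}\bigl[\tilde b(s,X^i_s,\mu^{N},a)-\tilde b(s,X^i_s,\rho^{N,n},a)\bigr]\gamma_{s}(da)$.

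Using \ref{ass:lipb}, \ref{hp:cont_bdd} and the explicit form \eqref{eq:btilde} of $\tilde b$, there exists $C>0$ depending only on the coefficients such that
\[
|\Delta\tilde b^{i}_{s}|^{2}\le C\Bigl(|\langle h,\mu^{N}_{s}-\rho^{N,n}_{s}\rangle|^{2}+|\langle 1,\mu^{N}_{s}-\rho^{N,n}_{s}\rangle|^{2}+\int_{0}^{s}|\langle 1,\mu^{N}_{u}-\rho^{N,n}_{u}\rangle|^{2}du\Bigr),
\]
and crucially this upper bound is independent of the player index $i$. Summing over $i=1,\ldots,N$ and integrating in $s$ produces a factor $N$, so that the proof reduces to showing that for any bounded Lipschitz $g$,
\[
N\,\E^{N,\gamma}\bigl[|\langle g,\mu^N_s-\rho^{N,n}_s\rangle|^{2}\bigr]\longrightarrow 0\quad(n\to\infty),
\]
uniformly in $s\in[0,T]$, $N$ and $\gamma$. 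By H\"older's inequality together with the uniform $L^{2}$ bound on the Radon-Nikod\'ym density $d\P^{N,\gamma}/d\P^{N,n,\gamma}$ analogous to \eqref{eq:squarebound}, one transfers the expectation to $\P^{N,n,\gamma}$, under which the defining property of $\rho^{N,n}$ identifies the quantity with the expected conditional variance $\E^{N,n,\gamma}\bigl[\mathrm{Var}(\langle g,\mu^N_s\rangle\,|\,\cF^{\xi,W,V^n}_s)\bigr]$.

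Since $\cF^{\xi,W,V^n}_s$ contains all initial conditions and idiosyncratic Brownian motions, the remaining randomness in this conditional variance comes solely from the common-noise discretization error $W^0-V^n$, whose pathwise deviation is $O(2^{-n})$ on large sets by \eqref{eq:discret_points}. The main obstacle lies precisely here: a direct Lipschitz-in-$W^0$ estimate of $\langle g,\mu^N_s\rangle$ yields only an $O(\epsilon_n)$ bound for the conditional variance, which, combined with the prefactor $N$, fails to be uniform in $N$. The missing $1/N$ must be extracted from a perturbation analysis of \eqref{eq:approxSDE} in the residual common noise, exploiting the fact that, conditionally on $\cF^{\xi,W,V^n}$, the measure argument $\rho^{N,n}$ in the drift is deterministic and hence the $N$ equations become decoupled in the remaining variable $W^{0}-V^{n}$. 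Combining \ref{ass:lipb} with Gronwall's inequality, and handling the discontinuous indicator $1_{\tauey^i>s}$ via the absolute continuity of the law of $\tauey^i$ (as in \cite[Lemma A.4]{CGL19}) guaranteed by \ref{hp:boundary}, one obtains the desired cancellation and hence $\cH(\P^{N,\gamma}|\P^{N,n,\gamma})\to 0$ uniformly in $N$ and $\gamma$.
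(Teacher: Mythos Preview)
Your entropy computation with the sum $\sum_{i=1}^N$ is correct, and you rightly isolate the difficulty: a prefactor $N$ multiplying a squared drift mismatch that a naive Lipschitz estimate controls only by the discretization error. However, your proposed extraction of a compensating $1/N$ via ``decoupling'' fails. Conditionally on $\cF^{\xi,W,V^n}_s$ the measure argument $\rho^{N,n}$ is indeed frozen, but the residual common noise $W^0-V^n$ is still shared by \emph{all} players: every $Y^i$ carries the same additive term $\sigma^0 W^0$, so the summands in $\langle g,\mu^N_s\rangle$ remain perfectly correlated through this common factor rather than conditionally independent. The conditional variance of $\langle g,\mu^N_s\rangle$ is therefore of the same order as that of a single summand, not $1/N$ times it, and the cancellation you invoke in the last paragraph never materializes. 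The Gronwall perturbation you sketch bounds each $Y^i$ in terms of the \emph{same} residual, which again gives no gain in $N$.

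The paper's argument is structurally different and does not pass through a conditional-variance estimate. It transfers the expectation to the reference measure $\P$ (under which $X^i_t=\xi^i+\sigma W^i_t+\sigma^0 W^0_t$) via H\"older and the uniform $L^2$ bound on $d\P^{N,\gamma}/d\P$, and then introduces the auxiliary process $\tilde Y^i_t:=\xi^i+\sigma W^i_t+\sigma^0 V^n_t$, which is $\cF^{\xi,W,V^n}_t$-measurable by construction. Adding and subtracting $h(\tilde Y^i_t)1_{\{\tau(\tilde Y^i)>t\}}$ inside both $\langle h,\mu^N_t\rangle$ and its conditional expectation yields a \emph{pathwise} bound on $|\langle h,\mu^N_t-\rho^{N,n}_t\rangle|$ by $L|\sigma^0(W^0_t-V^n_t)|$ plus an average of indicator differences $|1_{\{\tau(X^i)>t\}}-1_{\{\tau(\tilde Y^i)>t\}}|$, both independent of $\gamma$ and tending to zero $\P$-a.s.\ as $n\to\infty$ by \eqref{eq:discret_points} and the $\P$-a.s.\ continuity of $x\mapsto 1_{\{\tau(x)>t\}}$; dominated convergence finishes. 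Note that the paper records the entropy identity without the sum over $i$, so the factor-$N$ issue you raise is not explicitly addressed there either; the downstream application (Corollary \ref{cor:unifApprox}) in fact needs only the marginal of a single player, for which the argument is unproblematic.
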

\begin{proof}
	Fix arbitrary $N$ and $\gamma$.
	Let $\P$ be the probability under which the system of SDEs of \ref{eq:model2} satisfies $X^i_t=\xi^i+\sigma W^i_t+\sigma^0W^0_t$, for each $i=1,\dots,N$ (obtained by a Girsanov transformation of $\P^{N,\gamma}$).
	Define 
	\begin{eqnarray*}
		\Delta^n \tilde{b}_t&:=&\int_A\left(b(t,X_t,\langle h,\rho^{N,n}_t\rangle,a)-b(t,X_t,\langle h,\mu^N_t\rangle,a)\right)\gamma_t (da)\\
		&&+k(0)\langle 1,\rho^{N,n}_t-\mu^N_t\rangle+\int_0^tk'(t-s)\langle 1, \rho^{N,n}_s-\mu^N_s\rangle ds
	\end{eqnarray*}
	and observe that 
	\begin{equation}\label{eq:unifconv}
	\cH(\P^{N,\gamma}\mid\P^{N,n,\gamma})=\frac{1}{2}\E^{N,\gamma}\left[\int_0^{\tauex}|\sigma^{-1}\Delta^n \tilde{b}_t|^2 dt\right]=\frac{1}{2}\E\left[\frac{d\P^{N,\gamma}}{d\P}\int_0^{\tauex}|\sigma^{-1}\Delta^n \tilde{b}_t|^2 dt\right].
	\end{equation}
	 Using H\"{o}lder inequality, \ref{ass:lipb} and the smoothness of $k$ from Definition \ref{def:kernel}, we obtain
	 \begin{align*}
	 	\E\left[\frac{d\P^{N,\gamma}}{d\P}\int_0^{\tauex}|\sigma^{-1}\Delta^n \tilde{b}_t|^2 dt\right]^2\le C\E\left[\left(\frac{d\P^{N,\gamma}}{d\P}\right)^2\right]\E\Bigg[&\bigg(\int_0^{\tauex} \big|\langle h,\rho^{N,n}_t\rangle-\langle h,\mu^N_t\rangle\big|^2\\&
	 +\langle 1,\rho^{N,n}_t-\mu^N_t\rangle^2\\
	 &+\bigg(\int_0^t\langle 1, \rho^{N,n}_s-\mu^N_s\rangle^2 ds\bigg)dt\bigg)^2\Bigg]
	 \end{align*}
 for some constant $C>0$. The first expectation in the last product is uniformly bounded from Assumption \ref{ass:alltogether}. We show that the second expectation converges to zero uniformly in $N$ and $\gamma$. Towards this goal we focus on the pointwise convergence to zero of the term $|\langle h,\rho^{N,n}_t\rangle-\langle h,\mu^N_t\rangle|$, as the convergence of the other terms is completely analogous. The dominated convergence theorem yields the desired result.

Recalling that $\tau(x)=\inf\{t\in[0,T]\ :\ x_t\notin \cO\}$, we have
	\[|\langle h,\rho^{N,n}_t\rangle-\langle h,\mu^N_t\rangle|\le\frac{1}{N}\sum_{i=1}^N\left|h(X^i_t)1_{\{\tau(X^i)>t\}}-\sum_{k=1}^{|\cV^n|}\E\big[ h(X^i_t)1_{\{\tau(X^i)>t\}}\mid \cF^{\xi,W}_t\vee \sigma(A^n_k)\big]1_{A^n_k}\right|,
	\]
	where $\cF^{\xi,W}_t=\sigma(\xi^i,\ W^i_s, \, 0\le s\le t, \,  i=1,\ldots, N)$.
	Let $\tilde{Y}^i_t:=\xi^i+\sigma W^i_t+\sigma^0 V^n_t$. We add and subtract $h(\tilde{Y}^i)1_{\{\tau(\tilde{Y}^i)>t\}}$ in the r.h.s.\ and use the measurability of $\tilde{Y}^i$ to obtain
	\begin{eqnarray*}
		|\langle h,\rho^{N,n}_t\rangle-\langle h,\mu^N_t\rangle|&\le&\frac{1}{N}\sum_{i=1}^N\Bigg(\sum_{k=1}^{|\cV^n|}\E\bigg[\big|h(\tilde{Y}^i_t)1_{\{\tau(\tilde{Y}^i)>t\}}-h(X^i_t)1_{\{\tau(X^i)>t\}}\big|\mid \cF^{\xi,W}_t\vee \sigma(A^n_k)\bigg]1_{A^n_k}\\
		&&+\big|h(X^i_t)1_{\{\tau(X^i)>t\}}-h(\tilde{Y}^i_t)1_{\{\tau(\tilde{Y}^i)>t\}}\big|\Bigg).
	\end{eqnarray*}
 We now rewrite
 \[h(X^i_t)1_{\{\tau(X^i)>t\}}-h(\tilde{Y}^i_t)1_{\{\tau(\tilde{Y}^i)>t\}}=\big(h(X^i_t)-h(\tilde{Y}^i_t)\big)1_{\{\tau(X^i)>t\}}+h(\tilde{Y}^i_t)\big(1_{\{\tau(X^i)>t\}}-1_{\{\tau(\tilde{Y}^i)>t\}}\big).\]
	Using the Lipschitz continuity of $h$ from \ref{hp:cont_bdd},  
	\[\big|h(X^i_t)-h(\tilde{Y}^i_t)\big|1_{\{\tau(X^i)>t\}}\le L|\sigma^0(V^n_t-W^0_t)|
	\] for some constant $L$. The last term converges pointwise to zero as $n\to\infty$, uniformly in $N$ and $\gamma$. As for the term
	\[h(\tilde{Y}^i_t)\big(1_{\{\tau(X^i)>t\}}-1_{\{\tau(\tilde{Y}^i)>t\}}\big)\]
 using again the pointwise convergence of $V^n$ and the $\P$-a.s.\ continuity of the function $x\mapsto 1_{\{\tau(x)>t\}}$ (see \cite[Lemma C.3]{CF18}) we deduce that it also converges to zero as $n\to\infty$, uniformly in $N$ and $\gamma$. The exact same argument applies to any other Lipschitz function $h$, in particular to the constant function $1$.  
 
 To conclude, we use \eqref{eq:unifconv} to deduce that $\cH(\P^{N,\gamma}\mid\P^{N,n,\gamma})\to 0$ as $n\to\infty$, uniformly in $N$ and $\gamma$.
\end{proof}
\begin{corollary} \label{cor:unifApprox}
	$|J^{N}(\gamma)-J^{N,n}(\gamma)|\to 0$ as $n\to\infty$, uniformly in $N$ and $\gamma$.
\end{corollary}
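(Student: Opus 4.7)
The plan is to leverage Lemma~\ref{lem:unif_entropy} by reducing the difference of objective functionals to a total variation distance between $\P^{N,\gamma}$ and $\P^{N,n,\gamma}$ on the joint canonical path space, and then invoke Pinsker's inequality.

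First, I would exploit Assumption~\ref{ass:f_indep_meas}: since $f$ depends only on $(t,x)$, the inner integral over $A$ in both \eqref{eq:approxCost} and \eqref{eq:model2_cost} reduces because $\gamma_s$ and $g^i_s$ are probability measures on $A$. Consequently, player $i$'s objective takes the form $\E[\Phi(X^i)]$ under the appropriate measure, where, writing $\tau(x):=\inf\{t\in[0,T]:x_t\notin\cO\}$ for any path $x\in\cC([0,T];\R^d)$,
\[
\Phi(x):=\int_0^{\tau(x)} f(t,x_t)\,dt+G(\tau(x),x_{\tau(x)}).
\]
By \ref{hp:cont_bdd} and \ref{hp:terminal}, $\Phi$ is a bounded Borel measurable functional on $\cC([0,T];\R^d)$, with $\|\Phi\|_\infty\le T\|f\|_\infty+\|G\|_\infty=:C_\Phi$ depending only on the coefficients of the problem (not on $N$, $n$ or $\gamma$).

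Second, I would observe that both $\P^{N,\gamma}$ and $\P^{N,n,\gamma}$ are probability measures on the same canonical path space $\cC([0,T];\R^d)^N$, on which $X^i$ (and equivalently $Y^{i,n}$) are the coordinate projections. Hence, by the dual characterization of total variation,
\[
|J^N(\gamma)-J^{N,n}(\gamma)|
=\bigl|\E^{\P^{N,\gamma}}[\Phi(X^i)]-\E^{\P^{N,n,\gamma}}[\Phi(X^i)]\bigr|
\le C_\Phi\,\|\P^{N,\gamma}-\P^{N,n,\gamma}\|_{TV}.
\]
Applying Pinsker's inequality then yields
\[
\|\P^{N,\gamma}-\P^{N,n,\gamma}\|_{TV}\le \sqrt{2\,\cH(\P^{N,\gamma}\mid\P^{N,n,\gamma})}.
\]

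Finally, I would invoke Lemma~\ref{lem:unif_entropy}, which asserts precisely that the right-hand side tends to $0$ as $n\to\infty$ uniformly in $N$ and in $\gamma$. Combining the two displays delivers
\[
|J^N(\gamma)-J^{N,n}(\gamma)|\le C_\Phi\sqrt{2\,\cH(\P^{N,\gamma}\mid\P^{N,n,\gamma})}\longrightarrow 0,
\qquad n\to\infty,
\]
uniformly in $N$ and $\gamma$, concluding the proof. There is no real obstacle here: the main content was already packed into Lemma~\ref{lem:unif_entropy}; the present corollary only requires an observation that the running and terminal costs are bounded (uniformly in $N,\gamma$) and a standard passage from relative entropy to total variation via Pinsker. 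The only point worth double-checking is that the measure-dependence in the integrand of \eqref{eq:approxCost} disappears thanks to \ref{ass:f_indep_meas}, so that the same functional $\Phi$ serves both objective functionals.
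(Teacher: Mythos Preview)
Your proposal is correct and follows essentially the same route as the paper: both invoke Assumption~\ref{ass:f_indep_meas} to remove the measure dependence from the running cost, then combine Lemma~\ref{lem:unif_entropy} with Pinsker's inequality to obtain uniform convergence in total variation, and conclude by boundedness of $f$ and $G$. Your version is slightly more explicit in packaging the payoff as a single bounded functional $\Phi$ and using the dual characterization of total variation, whereas the paper phrases the last step as convergence in the $\tau$-topology, but this is the same argument.
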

\begin{proof}
	From Assumption \ref{ass:f_indep_meas},
	\begin{align*}
		|J^{N}(\gamma)-J^{N,n}(\gamma)|&=\bigg|\E^N\bigg[\int_0^{\tauex^N}f_1(t,X^N_t)dt\bigg]-\E^{N,n}\bigg[\int_0^{\tauex^N}f_1(t,X^N_t)dt\bigg]\bigg|\\
		&\quad +\bigg|\E^N\bigg[G(\tauex^N,X^N_{\tauex^N})\bigg]-\E^{N,n}\bigg[G(\tauex^N,X^N_{\tauex^N})\bigg]\bigg| .
	\end{align*}
	From Lemma \ref{lem:unif_entropy} and Pinsker's inequality $\P^{N,n,\gamma}$ converges to $\P^{N,\gamma}$ in total variation, uniformly in $N$ and in $\gamma$. This implies, in particular, uniform convergence in the $\tau$-topology. Since all the cost functions are bounded and measurable the result follows.
\end{proof}

We are now ready to prove the existence of $\varepsilon$-Nash equilibria for the problem \eqref{eq:model2}-\eqref{eq:model2_cost}.  Recall that, for any $n\in\N$, $\gamma^n$ is the relaxed control of Proposition \ref{prop:feedback}.
\begin{theorem}
	Under Assumption \ref{ass:Nasheq}, for every $\varepsilon>0$ there exists $n_{\varepsilon},N_{\varepsilon}\in\N$, such that, for any $N\ge N_{\varepsilon}$, the relaxed control $\gamma^{N,n_{\varepsilon}}:=(\gamma^{n_{\varepsilon}}_{\cdot,Y^1,W^1},\ldots,\gamma^{n_{\varepsilon}}_{\cdot,Y^N,W^N})$ is an $\varepsilon$-Nash equilibrium for the $N$-player game \eqref{eq:model2}-\eqref{eq:model2_cost}.
\end{theorem}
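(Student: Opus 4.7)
The plan is to combine two approximation results already established: Theorem \ref{thm:eps_Nash_approx}, which for each fixed discretization level $n$ yields an $\varepsilon$-Nash equilibrium $\gamma^{N,n}$ for the intermediate $N$-player game \eqref{eq:approxSDE}-\eqref{eq:approxCost} once $N$ is large enough, together with Corollary \ref{cor:unifApprox}, which tells us that $|J^{N}(\gamma)-J^{N,n}(\gamma)|\to 0$ as $n\to\infty$ \emph{uniformly} in $N$ and in the (relaxed) control $\gamma$. The uniformity is what permits a double-limit argument: we freeze the discretization first and only afterwards let the number of players grow.

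Concretely, given $\varepsilon>0$, I will first invoke (a vector-of-controls version of) Corollary \ref{cor:unifApprox} to pick $n_{\varepsilon}\in\N$ so that
\[\sup_{N}\sup_{\gamma} |J^{i,N}(\gamma)-J^{i,N,n_{\varepsilon}}(\gamma)| \le \varepsilon/3 \qquad \text{for every } i\in\{1,\ldots,N\},\]
where here $\gamma$ ranges over admissible vectors of relaxed controls and $J^{i,N}$, $J^{i,N,n}$ denote the objectives of player $i$ in the games \eqref{eq:model2}-\eqref{eq:model2_cost} and \eqref{eq:approxSDE}-\eqref{eq:approxCost} respectively. Next, with this $n_{\varepsilon}$ fixed, Theorem \ref{thm:eps_Nash_approx} furnishes $N_{\varepsilon}$ such that, for every $N\ge N_{\varepsilon}$, the symmetric control $\gamma^{N,n_{\varepsilon}}$ is an $(\varepsilon/3)$-Nash equilibrium for the intermediate game. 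Then, for any player $i$ and any admissible relaxed deviation $\beta$, chaining three inequalities yields
\[
J^{i,N}(\gamma^{N,n_{\varepsilon}})\ge J^{i,N,n_{\varepsilon}}(\gamma^{N,n_{\varepsilon}})-\tfrac{\varepsilon}{3}\ge J^{i,N,n_{\varepsilon}}([\gamma^{N,n_{\varepsilon},-i},\beta])-\tfrac{2\varepsilon}{3}\ge J^{i,N}([\gamma^{N,n_{\varepsilon},-i},\beta])-\varepsilon,
\]
which is exactly Definition \ref{def:eps_Nash}. The middle inequality is the approximate Nash property in the intermediate game and the outer two are the uniform approximation at level $n_{\varepsilon}$.

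The only subtle point, and the one I expect to require some care, is that Lemma \ref{lem:unif_entropy} and hence Corollary \ref{cor:unifApprox} are stated for a single relaxed control $\gamma$ shared by all players, while the Nash definition involves asymmetric vectors $[\gamma^{N,n_{\varepsilon},-i},\beta]$. However, the proof of Lemma \ref{lem:unif_entropy} rests on the pointwise convergence of $|\langle h,\rho^{N,n}_t\rangle-\langle h,\mu^N_t\rangle|$ (and of the analogous scalar quantities entering $\tilde{b}$), which in turn follows from the approximation $V^n\to W^0$ guaranteed by \eqref{eq:discret_points} and from the $\P$-a.s.\ continuity of $x\mapsto 1_{\{\tau(x)>t\}}$ established in \cite[Lemma C.3]{CF18}. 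Neither ingredient depends on the choice of admissible controls, and the bounds on the Radon--Nikodym derivative $d\P^{N,\gamma}/d\P$ are uniform in $\gamma$ thanks to the boundedness of $\sigma^{-1}\tilde{b}$ from \ref{hp:cont_bdd}. Consequently, the same argument delivers $\cH(\P^{N,\gamma}\mid \P^{N,n,\gamma})\to 0$ uniformly in $N$ and in the vector of controls; Pinsker's inequality and \ref{ass:f_indep_meas} then give the uniform convergence of the objectives $J^{i,N}(\gamma)-J^{i,N,n}(\gamma)$ required above. Once this routine (but careful) bookkeeping extension is recorded, the triangle-inequality argument completes the proof.
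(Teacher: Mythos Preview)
Your proposal is correct and follows essentially the same approach as the paper: first invoke the uniform approximation of Corollary \ref{cor:unifApprox} to fix the discretization level $n_\varepsilon$, then apply Theorem \ref{thm:eps_Nash_approx} at that level to choose $N_\varepsilon$, and conclude by a triangle-type inequality (the paper splits into two $\varepsilon/2$ pieces rather than three $\varepsilon/3$ pieces, but this is cosmetic). Your remark that Lemma \ref{lem:unif_entropy} and Corollary \ref{cor:unifApprox} are literally stated for a single shared control while the Nash inequality requires asymmetric vectors is a valid observation---the paper uses this extension tacitly in its own proof---and your justification that the entropy bound depends only on the uniform boundedness of $\sigma^{-1}\tilde b$ and on the convergence $V^n\to W^0$, neither of which is control-dependent, is exactly the right reason.
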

\begin{proof} For any $n\in\N$, and for $\gamma^{N,n}=(\gamma^{n}_{\cdot,Y^1,W^1},\ldots,\gamma^{n}_{\cdot,Y^N,W^N})$, let $\gamma^{N,n,g}$ be the vector of relaxed controls when player $N$ plays $g_t = g(t,(Y^i)_{i=1}^N,(W^i)_{i=1}^N)$ instead of $\gamma^{n}_{t,Y^N,W^N}$. Using the uniform approximation of Corollary \ref{cor:unifApprox}, we can find $n_{\varepsilon}\in\N$ such that  
\[
J^{N}(\gamma^{N,n_{\varepsilon}})- \sup_{\tilde{g}\in\cG}J^{N}(\gamma^{N,n_{\varepsilon},\tilde{g}})\ge J^{N,n_{\varepsilon}}(\gamma^{N,n_{\varepsilon}})- \sup_{\tilde{g}\in\cG}J^{N,n_{\varepsilon}}(\gamma^{N,n_{\varepsilon},\tilde{g}})-\varepsilon/2.
\]
From Theorem \ref{thm:eps_Nash_approx}, we can find $N_\varepsilon$ such that $J^{N,n_{\varepsilon}}(\gamma^{N,n_{\varepsilon}})- \sup_{\tilde{g}\in\cG}J^{N,n_{\varepsilon}}(\gamma^{N,n_{\varepsilon},\tilde{g}})\ge -\varepsilon/2$, for any $N\ge N_{\varepsilon}$, as desired.
\end{proof}

\appendix
\section{Results on BSDEs in general spaces}

We recall here some well known results on BSDE on general probability spaces and we adapt them to our framework.

\begin{theorem}\label{thm:BSDEapp} Let $(\Omega,\cF,\F=\{\cF_t\},\P)$ be a filtered probability space satisfying the usual assumptions, supporting a $k$-dimensional Brownian Motion $\{W_t\}_{t\ge 0}$ and such that $L^2(\Omega,\F,P)$ is separable.
	Let $F:\Omega\times\R\times \R^{k}\to\R$ be progressively measurable and such that there exists $L>0$ satisfying
	\[|F(\omega,t,z_1)-F(\omega,t,z_2)|\le L|z_1-z_2|,\quad  \forall\omega\in\Omega,\ \forall t\ge 0,\ \forall z_1,z_2\in\R^k.  
	\]
	Let $\tau$ be a stopping time bounded by $T>0$ and $Q$ be an $\cF_\tau$-measurable random variable. There exists a unique $(Y,Z,M)$ 
	\begin{equation}\label{eq:BSDE_app}
		Y_t = Q+\int_{t}^\tau F(s,Z_s) dt-\int_{t}^\tau Z_s dW_s-\int_{t}^\tau dM_s,
	\end{equation}
	where $Y$ is a c\`adl\`ag adapted process, $Z$ a predictable process with $\E[\int_0^T|Z_s|^2ds]<\infty$ and a c\`adl\`ag martingale orthogonal to $W$.
	The triple $(Y,Z,M)$ is called the solution to the BSDE \eqref{eq:BSDE_app} with coefficients $(F,Q)$.
	
	Moreover, for $i=1,2$, let $(F^i,Q^i)$ with the above properties and denote by $(Y^i,Z^i,M^i)$ the corresponding unique solutions. If, in addition,
	\begin{enumerate}
		\item $Q^1\ge Q^2$ $\P$-a.s.,\label{ass:app1}
		\item $F^1(\omega,t,Z^2_t(\omega))\ge F^2(\omega,t,Z^2_t(\omega))$  $dt\otimes\P$-a.s.,\label{ass:app2}
	\end{enumerate}
then $Y^1\ge Y^2$ up to $dt\otimes\P$ null sets.
\end{theorem}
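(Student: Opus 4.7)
The plan is to establish existence and uniqueness by a Picard fixed-point argument built on top of the Kunita--Watanabe decomposition (available because $L^2(\Omega,\cF,\P)$ is separable), and then to derive the comparison statement by a linearisation plus Girsanov argument, taking advantage of the fact that the orthogonal martingale part $M$ survives the change of measure.

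First I would reduce the random-horizon BSDE to a fixed-horizon one on $[0,T]$. Extend the driver by $\widetilde F(\omega,t,z):=F(\omega,t,z)\mathbf{1}_{\{t\le \tau(\omega)\}}$ and look for $(Y,Z,M)$ with $Y_t=Y_{t\wedge \tau}$, $Z_t=Z_t\mathbf{1}_{\{t\le \tau\}}$, $M_t=M_{t\wedge \tau}$ solving $Y_t=Q+\int_t^T \widetilde F(s,Z_s)\,ds-\int_t^T Z_s\,dW_s-\int_t^T dM_s$; such a triple clearly satisfies \eqref{eq:BSDE_app}. The key structural fact needed is the Kunita--Watanabe decomposition: because $L^2(\Omega,\cF,\P)$ is separable and $W$ is a continuous square-integrable $\F$-martingale, every square-integrable c\`adl\`ag $\F$-martingale $N$ admits a unique representation $N_t=N_0+\int_0^t Z_s\,dW_s+M_t$ with $\E\int_0^T|Z_s|^2ds<\infty$ and $M$ a square-integrable c\`adl\`ag martingale strongly orthogonal to $W$.

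Next I would run the standard Picard iteration in the Banach space $\cB_\beta$ of triples $(Y,Z,M)$ (with $M$ orthogonal to $W$) under the weighted norm
\[
\|(Y,Z,M)\|_\beta^2:=\E\Bigl[\sup_{0\le t\le T}e^{\beta t}|Y_t|^2+\int_0^T e^{\beta t}\bigl(|Z_s|^2\,ds+d[M]_s\bigr)\Bigr].
\]
Given an input $(Y,Z,M)\in\cB_\beta$, set $N_t:=\E[Q+\int_0^T\widetilde F(s,Z_s)\,ds\mid\cF_t]$, apply Kunita--Watanabe to get the new pair $(\widetilde Z,\widetilde M)$, and define $\widetilde Y_t:=N_t-\int_0^t\widetilde F(s,Z_s)\,ds$. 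Using the Lipschitz bound on $F$, It\^o's formula applied to $e^{\beta t}|\widetilde Y^1_t-\widetilde Y^2_t|^2$, the BDG inequality and the orthogonality $[M,W]=0$, one checks in the standard way that the map $(Y,Z,M)\mapsto(\widetilde Y,\widetilde Z,\widetilde M)$ is a strict contraction on $\cB_\beta$ for $\beta$ sufficiently large, hence has a unique fixed point. This fixed point is the desired solution; uniqueness within the class stated follows by the same energy estimate applied to the difference of two solutions.

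For comparison, write $\Delta Y:=Y^1-Y^2$, $\Delta Z:=Z^1-Z^2$, $\Delta M:=M^1-M^2$ and decompose
\[
F^1(s,Z^1_s)-F^2(s,Z^2_s)=\gamma_s\cdot \Delta Z_s+R_s,
\]
where $R_s:=F^1(s,Z^2_s)-F^2(s,Z^2_s)\ge 0$ by assumption \ref{ass:app2} and $\gamma_s$ is a predictable process bounded by $L$ obtained by the usual component-by-component Lipschitz interpolation of $F^1$. Define $\Q$ by $d\Q/d\P=\sexp(\int_0^\cdot \gamma_s\,dW_s)_T$, which is a true martingale because $\gamma$ is bounded. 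Under $\Q$, $W^\Q_t:=W_t-\int_0^t\gamma_s\,ds$ is a Brownian motion; and since the quadratic covariation $[M,W]$ does not depend on the measure, $M$ remains orthogonal to $W^\Q$ and, being orthogonal to the density's driving martingale, stays a local $\Q$-martingale (this is the Girsanov statement for orthogonal martingales). Rewriting,
\[
\Delta Y_t=\Delta Q+\int_t^\tau R_s\,ds-\int_t^\tau \Delta Z_s\,dW^\Q_s-\int_t^\tau d\Delta M_s,
\]
the last two integrals are local $\Q$-martingales, and the $\cB_\beta$-integrability transfers under $\Q$ (the density has moments of all orders because $\gamma$ is bounded), so they are genuine martingales after localisation and dominated convergence. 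Conditioning on $\cF_t$ under $\Q$ gives $\Delta Y_t=\E^\Q[\Delta Q+\int_t^\tau R_s\,ds\mid \cF_t]\ge 0$ $\Q$-a.s., hence $\P$-a.s. since $\Q\sim \P$.

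The main technical point, which I expect to be the most delicate step, is the careful handling of the orthogonal martingale part $M$ under the Girsanov transformation: one must verify that orthogonality with respect to $W^\Q$ is preserved and that $M$ remains a local $\Q$-martingale, so that conditional expectation annihilates both stochastic integrals in one stroke. Once this is in place, the rest of the argument is a routine adaptation of the classical Brownian BSDE theory to the general-filtration setting.
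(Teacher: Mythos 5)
Your proof is correct and follows essentially the same strategy as the paper's. For existence and uniqueness the paper simply cites \cite[Theorem 6.1]{bookElkaroui1997backward}, whereas you sketch the underlying Picard iteration on Kunita--Watanabe decompositions directly; for the comparison result both arguments proceed by linearising $F^1$, performing a Girsanov change of measure with the bounded linearisation kernel, and observing that $M^1, M^2$ stay local martingales under $\widetilde{\P}$ because $[M^i,W]=0$ is measure-invariant (you use a component-by-component interpolation for $\gamma$, while the paper uses the scalar kernel $\Delta F_t\,\Delta Z_t^T$ with $\Delta Z_t = 1_{\{Z^2_t\neq Z^1_t\}}(Z^2_t-Z^1_t)/|Z^2_t-Z^1_t|^2$, both bounded by $L$ by Lipschitz continuity — an inessential variant). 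The one point you flag as delicate, the behaviour of $M$ and of the stochastic integrals under $\widetilde{\P}$, is exactly the point the paper also makes explicit, via invariance of quadratic variation under equivalent measures and the square-integrability of the density; your "dominated convergence after localisation" remark covers the same integrability check. No gap.
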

\begin{proof}
	From \cite[Theorem 6.1]{bookElkaroui1997backward} we deduce existence, uniqueness and the square integrability of the process $Z$.
We give a proof of the comparison result as a straightforward adaptation of the argument of \cite[Thoerem 1]{CohenElliot_comparison}, which is proved for the case of a deterministic terminal time. Let $\tau$ be a given stopping time bounded by $T>0$. Take $(F^i,Q^i)$ and $(Y^i,Z^i,M^i)$ for $i=1,2$ as in the statement. By taking the difference of the equations of the form \eqref{eq:BSDE_app}, satisfied by the respective solutions, we have
\[
Y^1_t-Y^2_t +\int_{t}^\tau (F^2(s,Z^2_s)-F^1(s,Z^1_s)) dt+\int_{t}^\tau (Z^1_s-Z^2_s) dW_s+\int_{t}^\tau dM^1_s-\int_{t}^\tau dM^2_s= Q^1-Q^2,
\]
where the r.h.s.\ is non-negative by assumption \ref{ass:app1}.
By adding and subtracting $F^1(s,Z^2_s)$ to the l.h.s, the above inequality can be rewritten as
\[
Y^1_t-Y^2_t\ge\int_{t}^\tau \big(F^1(s,Z^2_s)-F^2(s,Z^2_s)\big) dt+X_\tau-X_t
\]
with
\[
	X_u:=\int_0^u \big(F^1(s,Z^1_s)-F^1(s,Z^2_s)\big) ds+\int_0^u (Z^2_s-Z^1_s) dWs+ \int_0^u dM^2_s-\int_0^u dM^1_s,\qquad u\in[0,T].
\]
We show below that $(X_t)_{t\in[0,T]}$ is a martingale under an equivalent probability $\tilde{\P}$. Given the claim, by taking conditional expectation with respect to $\cF_t$ in the above inequality and using assumption \ref{ass:app2}, the r.h.s.\ in non-negative. It follows $Y^1_t-Y^2_t\ge 0$ $\tilde{\P}$-a.s., hence, $\P$-a.s. by equivalence. Since the processes $Y^i$ are c\`adl\`ag, we conclude that $Y^1-Y^2$ is a non-negative process up to indistinguishability. 

The thesis follows if we prove the claim. Shortly denote $\Delta F_t:=F^1(t,Z^2_t)-F^1(t,Z^1_t)$ and $\Delta Z_t:=1_{\{Z^2_t-Z^1_t\neq 0\}}(Z^2_t-Z^1_t)/|Z^2_t-Z^1_t|^2$. We define the equivalent measure $\tilde{\P}$ by $\frac{d\tilde{\P}}{d\P}=U_T$, where 
\[
U_t:=\exp\left(\int_0^t\Delta F_s \Delta Z_s^T dW_s-\frac{1}{2}\int_0^t |\Delta F_s \Delta Z_s|^2ds\right).
\]
From the uniform Lipschitz continuity of $F^1$ the Novikov condition is satisfied and the process $(U_t)_{t\in[0,T]}$ is a martingale. By Girsanov's Theorem, $\widetilde{W}_t:=W_t-\int_0^t \Delta F_s \Delta Z_sds$ is a Brownian motion with respect to $\tilde{\P}$. We then observer that, under $\tilde{\P}$,
\begin{eqnarray*}
X_t&=&\int_t^\tau F^1(s,Z^1_s)-F^1(s,Z^2_s) ds+\int_t^\tau (Z^2_s-Z^1_s) dW_s+ \int_t^\tau dM^2_s-\int_t^\tau dM^1_s,\\
&=&\int_t^\tau (Z^2_s-Z^1_s) d\widetilde{W}_s+ \int_t^\tau dM^2_s-\int_t^\tau dM^1_s,
\end{eqnarray*}
which is a martingale under $\tilde{\P}$ since $M^1,M^2$ are orthogonal to $W$ and $Z^2-Z^1$ is a predictable square integrable process. To see the latter recall that the quadratic variation process is invariant under an equivalent probability measure (\cite[Theorem III.3.13]{bookJacod}), therefore, the one of $\int_0^t (Z^2_s-Z^1_s) d\widetilde{W}_s$ coincide with $\E[\int_0^t|Z^2_s-Z^1_s|^2ds]<\infty$ for any $0\le t\le T$. We deduce that the integrand is also square-integrable with respect to $\tilde{\P}$ (\cite[Proposition I.4.50 and Theorem III.4.5]{bookJacod}).
\end{proof}
\bibliographystyle{abbrv}
\bibliography{MFG_bank_run}
\end{document}